\newtheorem{theorem}{Theorem}[section]
\newtheorem{lemma}[theorem]{Lemma}
\newtheorem{proposition}[theorem]{Proposition}
\newtheorem{corollary}[theorem]{Corollary}
\newcommand\RR{{\Bbb R}}
\newcommand\ZZ{{\Bbb Z}}
\newcommand\R{{\Bbb R}}
\newcommand\C{{\Bbb C}}
\newcommand\Z{{\Bbb Z}}
\newcommand\supp{{\rm supp}}
\newcommand{\edth}{\eth}
\newcommand{\bedth}{\overline{\eth}}
\begin{document}
\title{Nearly tight frames of spin wavelets on the sphere}
\author{Daryl Geller\\
\footnotesize{Department of Mathematics, Stony Brook University}\\
\footnotesize\texttt{{daryl@math.sunysb.edu}}\\ \\
Azita Mayeli\\
\footnotesize{Department of Mathematics, Stony Brook University}\\
\footnotesize\texttt{{amayeli@math.sunysb.edu}}}
%\keywords{...}
\maketitle

\begin{abstract}
We show that the{\em \ spin wavelets} on the sphere $S^2$, which were constructed by the 
first author and Marinucci in \cite{GeMari08}, can be chosen so as to form a nearly tight frame.
These spin wavelets can be applied
(see \cite{spinphys} and \cite{spinstat}) to the study of the polarization of cosmic microwave
background radiation.  For certain of these frames, there is a positive $C$ such that
each frame element at scale $a^j$ is supported in a geodesic ball of radius $Ca^j$.
\end{abstract}

\section{Introduction}
In \cite{gmcw} and \cite{gmfr}, we have constructed nearly tight frames on any smooth compact Riemannian manifold 
${\bf M}$.  (Loosely, we call a frame ``nearly tight'' if by adjusting spacing and scaling on the space and frequency side, the 
frame bounds can be made as close to $1$ as desired.)  As we shall review, we showed (in the course of the proof
of Lemma 4.1 of \cite{gmcw}) that we could even arrange, that each frame element at scale $a^j$ be supported
in a geodesic ball of radius $a^j$.

The important special case in which ${\bf M}$ is a sphere was dealt with earlier by
Narcowich, Petrushev and Ward, in \cite{narc1}, \cite{narc2}.  Our philosophy in constructing frames
is quite similar to theirs.
There were, however, these important differences
between our work and theirs: First, their frames (in \cite{narc1}, \cite{narc2})
were always finite linear combinations of spherical harmonics,
hence real analytic, so they could not be supported in a small ball.  However, they were able to construct
frames that were actually tight, by performing exact discretizations of integrals, by using appropriate
cubature points.

In this article, building on results of the first author and Marinucci \cite{GeMari08},
we shall generalize results from our articles \cite{gmcw} and \cite{gmfr}, to construct nearly tight frames
of spin wavelets on the sphere.  As we shall explain, spin functions on the sphere are sections of a complex line
bundle, which is of great importance in physics.  We will be able to arrange, for some $C > 0$, that
each frame element at scale $a^j$ is supported in a geodesic ball of radius $Ca^j$.

The main immediate motivation for this work is application to the study of cosmic microwave background radiation
(CMB).  This
radiation, emitted only 370,000 years after the Big Bang, has been called the ``Rosetta Stone'' of the physics of the
early universe.  It has both a temperature and a polarization; the former is a scalar quantity, while the latter
is naturally viewed as a spin $2$ quantity.  Physical and statistical consequences of the
theory of spin wavelets, as presented in \cite{GeMari08} and in this article, are discussed in
\cite{spinphys} and \cite{spinstat}.  

In this introduction, we will first review spin functions, then review our philosophy for constructing nearly
tight frames on manifolds.  Finally we will state our main results, and explain the plan of the paper.

We refer the reader to \cite{gmcw} for a discussion of earlier work on frames on manifolds, and
to \cite{spinphys} and \cite{spinstat} for references to the extensive physical and statistical literature
related to CMB.

We would like to thank Domenico Marinucci for introducing us to this area of research, and for
many helpful discussions.

\subsection{Spin functions and kernels}

We first review a number of facts about spin $s$ functions and kernels.
For proofs, see \cite{GeMari08}.

Let $U_I$ denote $S^2\setminus\{{\bf N},{\bf S}\}$, where ${\bf N}=( 0, 0, 1)$ and ${\bf S}=( 0, 0, -1)$,
the north and south poles of the sphere respectively. For any
rotation $R\in SO(3)$ let $U_R= RU_I$. 
On $U_I$ we use standard spherical coordinates $(\theta,\phi)$
($0 < \theta < \pi$, $-\pi \leq \phi < \pi$), and analogously, on any $U_R$ we use coordinates
$(\theta_R,\phi_R)$ obtained by rotation of the coordinate system on $U_I$.
At each point $p$ of $U_R$ we let $\rho_R(p)$ be the unit tangent vector at $p$
which is tangent to the circle $\theta_R = $ constant, pointing in the direction of increasing $\phi_R$.
(Thus, for any $p \in U_I$, $\rho_R(Rp) = R_{*p} [\rho_{I}(p)]$. ($ R_{*p}$ is the induced map on tangent vectors at $p$.)) 
If $p \in U_{R_1} \cap U_{R_2}$,
it makes sense to define the (oriented)
angle from the  tangent vector $\rho_{R_1}(p)$ to  $\rho_{R_2}(p)$,
for $p$ in the intersection of $U_{R_1}$ and $U_{R_2}$. We let
$\psi_{p R_2 R_1}$ denote this angle.  
(See \cite{GeMari08} for a careful discussion of which is the oriented angle.)
Note that the intersection
of the two preceding sets consists of the entire sphere, excluding the points 
$\{R_1({\bf N}), R_1({\bf S}), R_2({\bf N}), R_2({\bf S})\}$.)\\
Let $s$ be an integer  and let $\Omega$ be an open subset of $S^2$.
We define $C_s^\infty(\Omega)$, the set of all smooth spin $s$
functions over $\Omega$, to be the set of 
$f=(f_R)_{R\in SO(3)}$ with $f_R\in C^\infty(\Omega\cap
U_R)$ for all $R$, such that for any $R_1, R_2 \in SO(3)$ and all $p\in U_{R_1}\cap
U_{R_2}\cap \Omega$ the components of $f$ satisfy the following ``spin relation'':
\begin{align}\label{spin-relation}
f_{R_2}(p)= e^{is\psi} f_{R_1}(p)
\end{align}
where $\psi:=\psi_{p R_2 R_1}$, the angle
introduced above. 
Observe
that for $s=0$, $C_s^\infty(\Omega)$ can be identified with
$C^\infty(\Omega)$, the set of smooth scalar functions over $\Omega$.

If, say, $R_1 = I$, $R_2 = R$, then heuristically
$f_R$ is $f_I$ ``looked at after the coordinates have been rotated by $R$'';
at $p$, it has been multiplied by $e^{is\psi}$.  

For any $s$, we can identify $C^{\infty}_s(\Omega)$ with the
sections over $\Omega$ of the complex line bundle ${\bf L}^s$ 
obtained by using the $e^{is\psi_{p R_2 R_1}}$ as transition functions from the
chart $U_{R_1}$ to the chart $U_{R_2}$; then $f_R$ is the trivialization of the section over $U_R \cap \Omega$.
However, in this article, except in  Proposition \ref{strsp}, we shall look
at spin functions as collections of functions $f=(f_R)_{R\in SO(3)}$ as above, since it is
conceptually easier to do so.
 
Analogously to $C^{\infty}_s$,  we define $C_s^\infty(S^2\times S^2)$, the set of
spin $s$ smooth kernels over $S^2$, to be the set of $K=(K_{R',R})_{R', R\in SO(3)}$ such
that $K_{R',R}\in C^\infty(U_{R_1'}\times U_R)$ and
\begin{align}\label{kernel-spin-relation}
K_{R_1', R_1}(p,q)= e^{is(\psi'-\psi)}K_{R', R}(p,q)~~~~~\forall~ p\in U_{R_1'}\cap U_{R'},~~ \forall ~q\in  U_{R_1}\cap U_{R}
\end{align}
where $\psi':=\psi_{p R_1' R'}$ and $\psi:=\psi_{q R_1 R}$. 
We may then naturally define a kernel operator ${\cal K}:
C_s^\infty(S^2)\rightarrow C_s^\infty(S^2)$ by $({\cal K}f)_{R'}(x)
= \int_{S^2} K_{R',R}(x,y) f_R(y) dy$, for all $x \in U_{R'}$. \\

 In a manner similar to the definition of  spin $s$ smooth function,
 we can define $L_s^2(\Omega)$ as the set of all
 $f=(f_R)_{R\in SO(3)}$ such that $f_R\in
 L^2(\Omega\cap U_R)$  for any $R\in SO(3)$, and such that (\ref{spin-relation})
holds a.e.\ on $\Omega$. For any $f, g\in
 L_s^2(\Omega)$, we may define  $\langle f, g\rangle= \langle f_R,
 g_R\rangle$, since the scalar product is independent of choice of
 $R$ (see (\ref{spin-relation})). $L_s^2(\Omega)$ is 
 evidently a Hilbert space with this scalar product. \\

 There is a unitary action of $SO(3)$ on $L_s^2(S^2)$, 
 given by  $f\rightarrow f^R$ (for $R \in SO(3)$), which is determined by
 \begin{align}
(f^R)_I(p)= f_R(Rp)~~~~ p\in U_I.
 \end{align}
Hence, by (\ref{spin-relation}), for any $R_0\in SO(3)$ and $p\in
U_I\cap U_{R_0}$ we have  $(f^R)_{R_0}(p)= e^{i
s\psi}f_R(Rp)$ where $\psi:=\psi_{p R_0 I}$. We shall think of
$f^R$ as a {\it rotate} of $f$. Observe that for $s=0$ we have the natural unitary action of $SO(3)$ on $L^2(S^2)$ and $f^R(p)=f(Rp)$. \\

We recall the  definition of {\it spin-raising}  and {\it
spin-lowering}
operators $\edth$ and $\bar\edth$ from \cite{np}, which are as follows:\\
For a spin  $s$ smooth function $f$,  define
\[ \edth_{sR} f_{R} = -(\sin \theta_R)^s \left(\frac{\partial}{\partial \theta_R} +
\frac{i}{\sin \theta_R}\frac{\partial}{\partial \phi_R} \right)(\sin
\theta_R)^{-s} f_{R}. \]

The ``spin-raising'' operator
$\edth: C^{\infty}_s(\Omega) \to C^{\infty}_{s+1}(\Omega)$ given by
$(\edth f)_R = \edth_{sR}f_R$ is well-defined. There is also a 
well-defined spin-lowering
operator $\bedth: C^{\infty}_s(\Omega) \to C^{\infty}_{s-1}(\Omega)$,
given by   $(\bedth f)_R = \bedth_{sR}f_R$, where
 $\bedth_{sR} f_R =
\overline{\edth_{(-s)R} \overline{f_R}}$.  The operators  $\edth$ and
$\bedth$ commute with the action of $SO(3)$ on smooth spin
functions.\\

Let $\{Y_{lm}: l \geq 0,\: -l \leq m \leq l\}$ be the standard basis
of spherical harmonics on $S^2$. For
$l \geq |s|$, we define the spin $s$ spherical harmonics of \cite{np} as follows: If $s \geq 0$,
${}_sY_{lm} = \edth^s Y_{lm}/b_{ls}$, where
$(\edth^s Y_{lm})_R(p)=\edth_{s-1,R}\cdots\edth_{1R}\edth_{0R}   Y_{lm}(p)$ ($p\in U_R$)  and $b_{ls} =
[(l+s)!/(l-s)!]^{1/2}$.  Similarly, if $s < 0$, we define ${}_sY_{lm} =
(-\bedth)^{|s|} Y_{lm}/b_{ls}$ for $b_{ls} = [(l-s)!/(l+s)!]^{1/2}$.
Then $\{{}_sY_{lm}: l
\geq |s|, -l \leq m \leq l\}$   forms an orthonormal basis for
$L^2_s(S^2)$. 

Analogously to the spherical Laplacian, for spin $s$ functions we
define the spin $s$ spherical Laplacian $\Delta_s$ as $-\bar\edth \edth$
for $s\geq 0$ and $-\edth\bar\edth$ for $s<0$. Then $\Delta_0$ is
the usual spherical Laplacian. For $l \geq |s|$, let ${\cal H}_{ls}
= <{}_sY_{lm}: -l \leq m \leq l>$.  Also, for $l \geq |s|$, let
$\lambda_{ls} = (l-s)(l+s+1)$ if $s \geq 0$, and let $\lambda_{ls} =
(l+s)(l-s+1)$ if $s < 0$. Then ${\cal H}_{ls}$ is the subspace of
$C^{\infty}_s(S^2)$ which consists of eigenfunctions of $\Delta_s$
for the eigenvalue $\lambda_{ls}$.

\subsection{Nearly tight frames on manifolds}

We now review results from our articles \cite{gmcw} and \cite{gmfr}.
Let ${\bf M}$ be a smooth oriented (connected) Riemannian manifold without boundary.

In \cite{gmcw} and \cite{gmfr}, we studied the kernels of operators, from which we
could obtain frames on ${\bf M}$ with frame bounds whose ratio is close to one for
``dilation parmeter'' $a$ close to one. In fact, we started from the
Calder\`on formula
\begin{align}\label{Calderon-Formula}
\int_0^\infty \mid f(ut)\mid^2 dt/t=c<\infty
\end{align}
where $f \in \cal S(\RR^+)$ and $u> 0$. This guarantees that
$f(0)=0$.\\

For $a>1$ and sufficiently close to $1$, by discretizing the
equation (\ref{Calderon-Formula}) one gets, for $u > 0$, the
Daubechies condition

\begin{equation}\label{daub}
0 < A_a \leq \sum_{j=-\infty}^{\infty} |f(a^{2j} u)|^2 \leq B_a < \infty,
\end{equation}

where $B_a/A_a$ converges nearly quadratically to $1$ as
$a\rightarrow 1$.\\
If we let $P$ denote the projection in $L^2({\bf M})$ onto the 
constant functions, which is the null
space of $\Delta$ (the Laplace-Beltrami operator), then by 
applying spectral theory
for $\Delta$ in (\ref{daub}) one obtains the following inequality:

\begin{equation}\label{daub-spec}
0 < A_a (I-P)\leq \sum_{j=-\infty}^{\infty} |f(a^{2j} \Delta)|^2 \leq B_a (I-P)< \infty,
\end{equation}
where $I$ is the identity operator and the sum converges in the strong topology. For $t > 0$, let
$K_t$ be the kernel of the operator $f(t^2 \Delta)$ (which is smooth).
In \cite{gmfr} we verified  that for certain discrete sets $\{x_{j,k}\}$,
where $j$ runs on $\ZZ$ and $1 \leq k \leq N_j$, and for certain positive
weights $\mu_{j,k}$, the system  $\{\phi_{j,k}:=\mu_{j,k}\overline{K}_{a^j}(x_{j,k},\cdot)\}$
 constitutes a ``nearly'' tight frame for $L^2({\bf M})$: There exist
constants (possibly dependent on $a$) $0< A\leq B< \infty$ such that
for any $F \in (I-P)L^2({\bf M})$ one has:
 \begin{align}\label{frame}
 A\parallel F\parallel_2^2 \leq \sum_{j,k} \mid \langle F, \phi_{j,k}\rangle\mid^2 \leq B\parallel F\parallel_2^2;
 \end{align}
further, if the $x_{j,k}$ are chosen appropriately, we can arrange for $B/A$ to be arbitrarily
close to $B_a/A_a$ (which, as we said, converges nearly quadratically to $1$ as
$a\rightarrow 1$).  Thus, for appropriate choices, $B/A$ is arbitrarily close to $1$ 
(which is what we mean when we say that the frame is ``nearly'' tight).  For a precise statement
in the special csse ${\bf M} = S^2$, see Theorem \ref{framain0} below, in the case $s = 0$.

In the course of the proof of Lemma 4.1 of \cite{gmcw} we showed that we could arrange, that the frame element  
$\phi_{j,k}$ is supported in a geodesic ball of radius $a^j$.  Indeed, it suffices to choose 
the function $f$ to have the form $f(\xi^2) = g(\xi)$ for some even $g \in {\cal S}(\RR)$
with \supp$\hat{g} \subseteq (-1,1)$.  Then for some $c$, the Fourier inversion theorem implies
\[ f(t^2 \Delta) = g(t\sqrt{\Delta}) =  c\int_{-1}^{1} \hat{g}(\tau) \cos(\tau t{\sqrt \Delta})F ds. \]
From this and the finite propagation speed property of the wave equation, we see that the
support of $K_t$, the kernel of $f(t^2 \Delta)$, is contained in $\{(x,y) \in {\bf M} \times {\bf M}: 
d(x,y) \leq t\}$, where $d$ denotes geodesic distance.
Since $\phi_{j,k}=\mu_{j,k}\overline{K}_{a^j}(x_{j,k},\cdot)$ for some
constant $\mu_{j,k}$, we see that the support of $\phi_{j,k}$ is indeed contained in a ball of radius
$a^j$ about $x_{j,k}$.

In \cite{gmcw}  we proved that, for any $f$, our wavelets enjoy a property of ``near-diagonal
localization'' which leads to the construction of our frames.  We now explain this property for
the sphere in the spin $s$ situation (the scalar situation of \cite{gmcw} and \cite{narc1}, \cite{narc2} is
the case $s = 0$).  

\subsection{Main results and plan of the paper}

In the spin $s$ situation on the sphere, we have:

\begin{theorem}\label{locestls}
(Near-diagonal localization)
Let $K_t$ be the kernel of $f(t^2\Delta_s)$ where $f \in \cal S(\RR^+)$ and 
$f(0) = 0$.  Then  for every $R, R' \in SO(3)$,
every pair of compact sets ${\cal F}_R \subseteq U_R$ and ${\cal
F}_{R'} \subseteq U_{R'}$, and every pair of $C^{\infty}$
differential operators $X$ (in $x$) on $U_{R'}$ and $Y$ (in $y$) on
$U_{R}$, and for every nonnegative integer $N$, there exists $c$
such that for all $t > 0$, all $x \in {\cal F}_{R'}$ and all $y \in
{\cal F}_{R}$, we have
\begin{equation}
\label{diaglocls0}
\left| XYK_{t,R',R}(x,y) \right|\leq c ~ \frac{ t^{-(2+I+J)}}{(d(x,y)/t)^N},
\end{equation}
where $I:=\deg X$ and $J:=\deg Y$.
\end{theorem}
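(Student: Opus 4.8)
The plan is to reduce to the scalar case $s=0$, which is exactly the assertion of this theorem for the ordinary Laplace--Beltrami operator and was established in \cite{gmcw}. By conjugating with complex conjugation --- which identifies $C^\infty_s$ with $C^\infty_{-s}$, interchanges $\edth$ with $\bar\edth$, and leaves the estimate (\ref{diaglocls0}) unchanged --- we may assume $s\ge 0$.

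First I would rewrite the kernel of $f(t^2\Delta_s)$ in terms of scalar data. Using the orthonormal basis $\{{}_sY_{lm}\}$ of $L^2_s(S^2)$, the relation ${}_sY_{lm}=\edth^s Y_{lm}/b_{ls}$ ($l\ge s$), and the eigenvalue identity $\lambda_{ls}=l(l+1)-s(s+1)$, one obtains
\[
 K_{t,R',R}(x,y)=\edth^s_{x}\,\bar\edth^s_{y}\,L_{t,R',R}(x,y),
\]
where $\edth^s_x$ (resp.\ $\bar\edth^s_y$) denotes the $s$-fold spin-raising (resp.\ spin-lowering) operator applied in the $x$- (resp.\ $y$-) variable to a scalar kernel, and $L_t$ is the kernel of $\Psi_t(\Delta_0)\Pi_s$ on $L^2(S^2)$; here $\Pi_s$ is the orthogonal projection onto $\overline{{\rm span}}\{Y_{lm}:l\ge s\}$ and
\[
 \Psi_t(\mu)=\frac{f\bigl(t^2(\mu-s(s+1))\bigr)}{q_s(\mu)},\qquad q_s(\mu):=\prod_{j=1}^{s}\bigl(\mu-j(j-1)\bigr),
\]
so that $b_{ls}^2=q_s(l(l+1))$. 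One uses here that $\edth^s,\bar\edth^s$ carry scalar kernels to kernels obeying the spin relation (\ref{kernel-spin-relation}), that $\bar\edth^s$ applied to a conjugated scalar function is the conjugate of $\edth^s$ applied to the function, and that $f(0)=0$, so the vanishing of $\Psi_t$ at $\mu=s(s+1)$ (i.e.\ $l=s$) is consistent with $f(t^2\Delta_s)$ annihilating $\ker\Delta_s$. Since the roots $0,2,\dots,s(s-1)$ of $q_s$ all lie below $s(s+1)$, $\Psi_t$ is smooth and rapidly decreasing on the spectral range retained by $\Pi_s$, and after modifying it on $[0,s(s+1))$ so as to vanish near the finitely many excluded eigenvalues we may take $\Psi_t\in{\cal S}(\RR^+)$.

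Next, since $\mathcal F_{R'}\subseteq U_{R'}$ and $\mathcal F_R\subseteq U_R$ are compact and stay away from the coordinate singularities $\{{\bf N},{\bf S}\}$ where the $\csc\theta$-coefficients of $\edth$ blow up, $X\edth^s$ and $Y\bar\edth^s$ are $C^\infty$ differential operators, of respective degrees $I+s$ and $J+s$, on neighbourhoods of $\mathcal F_{R'}$ and $\mathcal F_R$; hence
\[
 \left|XYK_{t,R',R}(x,y)\right|=\left|(X\edth^s)_x\,(Y\bar\edth^s)_y\,L_{t,R',R}(x,y)\right|,
\]
and it suffices to prove the near-diagonal estimate for the scalar kernel $L_t$ with $X\edth^s,Y\bar\edth^s$ in place of $X,Y$ and with a power of $t$ improved by $t^{2s}$ relative to the naive count (note $2-2s+(I+s)+(J+s)=2+I+J$). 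The extra gain of $t^{2s}$ is supplied precisely by the factor $q_s(\Delta_0)^{-1}$, an operator of order $-2s$ that is harmless because $\Pi_s$ keeps us away from the zeros of $q_s$: thus $\Psi_t(\Delta_0)\Pi_s$ behaves like a scale-$t$ operator of order $-2s$, for which the proof of near-diagonal localization in \cite{gmcw} --- which rests only on finite propagation speed for $\cos(\tau t\sqrt{\Delta_0})$ together with interior elliptic estimates --- gives exactly the displayed bound. The finite-rank difference in $\Pi_s=I-\Pi_s^{\perp}$ contributes a smooth finite-rank kernel, easily seen to satisfy the same bound.

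The main obstacle is the uniformity in $t$. The reduced scalar symbol $\Psi_t$ is not of the form $\psi(t^2\Delta_0)$ for a fixed Schwartz $\psi$, because of the shift $\mu\mapsto\mu-s(s+1)$ in the argument of $f$ and because $q_s(\mu)$ depends on $\mu$ rather than on $t^2\mu$, so one cannot simply cite the scalar theorem as a black box. I would handle this either by re-running the argument of \cite{gmcw} for the family $\{\Psi_t\}_{t>0}$ --- the hypotheses used (finite propagation speed, which is scale-covariant, and interior elliptic estimates, which are uniform on the relevant compacta) hold with constants independent of $t$ after the rescaling $\mu\mapsto t^2\mu$ --- or by decomposing $\Psi_t(\mu)=t^{2s}f\bigl(t^2\mu-t^2s(s+1)\bigr)\big/\prod_{j=1}^{s}\bigl(t^2\mu-t^2j(j-1)\bigr)$ and checking directly that the Schwartz seminorms of the rescaled symbol, together with the vanishing forced by $f(0)=0$, are controlled uniformly in $t$, so that the order-$(-2s)$, scale-$t$ estimate persists. (Alternatively, one could bypass the reduction and prove (\ref{diaglocls0}) directly for $\Delta_s$, a non-negative self-adjoint elliptic Laplace-type operator on the line bundle ${\bf L}^s$ with the same principal symbol as $\Delta_0$, since the scalar argument of \cite{gmcw} transfers essentially verbatim once one works in the trivializations $f_R$ and notes that the relevant finite propagation speed is with unit speed for geodesic distance; but the $\edth$-reduction is more in keeping with the constructions of \cite{GeMari08} on which this paper builds.)
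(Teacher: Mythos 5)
Your strategy is genuinely different from the paper's, and while the $\edth$-reduction is natural and well-chosen, the main line of your argument has a real gap at the crucial step where you need the gain of $t^{2s}$. The paper instead proves two lemmas for the spin Laplacian itself --- a finite-propagation-speed corollary for the spin wave group $\cos(\tau\sqrt{\Delta_s})$ (obtained by reducing to the Euclidean wave-propagation theorem in local trivializations), and a Strichartz-type statement that $p(\sqrt{\Delta_s})\in OPS^j_{1,0}$ for $p\in S^j_1(\RR)$ --- and then observes that with these in hand the section 4 argument of \cite{gmcw} applies to $\Delta_s$ with essentially no change. Interestingly, the paper's proof of its spin Strichartz lemma does use exactly your $\edth$-trick, writing $\varphi' p(\sqrt{\Delta_s})\varphi$ as $\varphi'\,\edth^{[s]}_{R'}\, q(\sqrt{\Delta_0})\,{\cal D}\,\varphi$ with $q\in S^{j-2s}_1$; but it does so at a fixed scale, where there is no $t$-uniformity to worry about. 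This is the point your parenthetical ``alternative'' comes closest to, and it is indeed the route taken.

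The gap in your main argument is in the assertion that $q_s(\Delta_0)^{-1}\Pi_s$ behaves like a scale-$t$ operator of order $-2s$, so that the seminorms of the rescaled symbol $\Psi_t(\nu/t^2)$ are $O(t^{2s})$. They are not. Already for the zeroth seminorm: since $f(0)=0$ gives only first-order vanishing, while $q_s(\mu)\sim\mu^s$, the on-diagonal value of your scalar kernel is
\[
  L_t(x,x)\;\sim\;\sum_{l\ge s+1}\frac{f\bigl(t^2\lambda_{ls}\bigr)}{q_s(l(l+1))}\,(2l+1)
  \;\sim\;\sum_{l}\frac{|f(t^2l^2)|}{l^{2s-1}},
\]
which for $s=2$ is of size $t^{2}\log(1/t)$, strictly worse than the $t^{-(2-2s)}=t^{2}$ that a genuine scale-$t$, order-$(-2s)$ kernel would give; more generally the $k$-th derivative of $\Psi_t(\nu/t^2)$ near the bottom of the spectrum is $O(t^2)$ at best, not $O(t^{2s})$. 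Consequently, quoting or ``re-running'' the scalar near-diagonal theorem for $L_t$ alone and then absorbing $\edth^s_x,\bar\edth^s_y$ into $X,Y$ gives the wrong power of $t$ for $s\ge 2$. The correct estimate for $K_t=\edth^s_x\bar\edth^s_y L_t$ holds only because, at the level of $\deg X\ge s$ and $\deg Y\ge s$, the extra factors of $l$ from the $\edth$-derivatives exactly cancel the $q_s(l(l+1))^{-1}$ and eliminate the logarithmic divergence --- a cancellation that is invisible at the symbol-seminorm level. To save your approach you would need to commute the $\edth$'s through $f(t^2\Delta_s)$ explicitly and estimate the composite before ever passing to a pure scalar spectral multiplier, or (as the paper does) prove the spin analogues of the two scalar ingredients and avoid the global reduction entirely.
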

{\bf Remarks:}\\
1. When $s = 0$ (so that $K_{t,R',R}$ is independent of $R', R$) the near-diagonal
localization theorem is simpler.  In fact, as we showed in
\cite{gmcw}, for some $c$, (\ref{diaglocls0}) holds for all $x,y \in S^2$ and all $t > 0$.
If $f$ has compact support away from the origin, the result was shown earlier for the
sphere in \cite{narc1}, \cite{narc2}.\\
2. Theorem \ref{locestls} was shown in \cite{GeMari08}, in the special case where
$f$ has compact support away from the origin.  It will be shown in general, in 
section \ref{genf} of this article.\\

In section \ref{spinfr} of this article we will use Theorem \ref{locestls} in
order to obtain nearly tight frames of spin wavelets for
$L^2_s(S^2)$. Let us now explain what we mean by spin wavelets, following \cite{GeMari08}:
Associated to the operator $f(t^2\Delta_s)$ there is a kernel $K_t = K_{t,f}$
which can be represented as
\begin{equation}
\label{ktrr}
K_{t,R,R'}(x,y) = \sum_{l \geq |s|} \sum_{\mid m\mid\leq l}  f(t^2\lambda_{ls})\:{}_sY_{lmR}(x)~\overline{{}_sY_{lmR'}}(y).
\end{equation}
For $R \in SO(3)$, and $x \in U_R$, let us define
\begin{equation}
\label{needdf0}
W_{xtR} = \sum_{l \geq |s|} \sum_{\mid m\mid\leq l}  \overline{f}(t^2\lambda_{ls})\:\overline{{}_sY_{lmR}}(x)\:{}_sY_{lm}.
\end{equation}
Then, by definition, $W_{xtR} \in C_s^{\infty}(S^2)$ and the series converges pointwise.
Note that $(W_{xtR})_{R'}(y) = \overline{K}_{t,R,R'}(x,y)$. 

The preceding generalizes the definition we gave in \cite{gmfr} of
wavelets for the case $s=0$. Moreover, if $F \in L^2_s(S^2)$, then
for $x \in U_R$
\begin{equation}
\label{betFdf0}
\beta_{t,F,x,R} := \langle F, W_{xtR}\rangle = (f(t^2\Delta_s)F)_R(x) = (\beta_{t,F})_R(x)
\end{equation}
where we have set $\beta_{t,F}:= f(t^2\Delta_s)F$.  We call
$\beta_{t,F,x,R}$ a {\em spin wavelet coefficient} of $F$.  
%so,
 %by the hypothesis (\ref{Calderon-Formula}) it is obvious that the $s$ spin coherent system $\{W_{xtR}\}_{t,x}$ is admissible for $L^2_s(S^2)$:  
 %\begin{align}
 %\int_{U_R}\int_0^\infty \mid \langle F, W_{xtR}\rangle\mid^2 dxdt/t= c\parallel F\parallel^2\quad \forall ~ F\in L^2_s(S^2).
 %\end{align}

This work aims  to show that, in analogy to the case $s=0$, one can
obtain a nearly tight frame from spin wavelets: 
Let $P=P_{\mid s\mid,s}$ be
the projection onto ${\cal H}_{\mid s\mid, s}$, the null space 
of $\Delta_{s}$ in $C_s^\infty(S^2)$.  Specifically, in section 4 we shall show:

\begin{theorem}
\label{framain0}
Fix $a >1$, and say $c_0 , \delta_0 > 0$.  Suppose $f \in {\mathcal S}(\RR^+)$, 
and $f(0) = 0$.  Suppose that the Daubechies condition $(\ref{daub})$ holds.
Then there exists a constant $C_0 > 0$ $($depending only on $f, a, c_0$ and $\delta_0$$)$
as follows:\\
For $t > 0$, let $K_t = K_{t,f}$ be the kernel of $f(t^2\Delta_s)$,
so that $K_{t,R,R'}$ is as in (\ref{ktrr}). 
Say $0 < b < 1$.
Suppose that, for each $j \in \ZZ$, we can write $S^2$ as a finite disjoint union of measurable sets 
$\{E_{j,k}: 1 \leq k \leq N_j\}$,
where:
\begin{equation} 
 \label{diamleq0}
\mbox{the diameter of each } E_{j,k} \mbox{ is less than or equal to } ba^j, 
\end{equation}
and where:
\begin{equation} 
 \label{measgeq0}
\mbox{for each } j \mbox{ with } ba^j < \delta_0,\: \mu(E_{j,k}) \geq c_0(ba^j)^n.
\end{equation}
$($Such $E_{j,k}$ exist 
provided $c_0$ and $\delta_0$ are sufficiently small, independent of the 
values of $a$ and $b$.$)$
Select $x_{j,k} \in E_{j,k}$ for each $j,k$, and select $R_{j,k}$ with $x_{j,k} \in U_{R_{j,k}}$.

For $1 \leq k \leq N_j$, define $\phi_{j,k} = \mu(E_{j,k})^{1/2}~W_{x_{j,k},a^j,R_{j,k}}$.  Then

\begin{align} 
\label{phijkfr}
(A_a - C_0b)\parallel F\parallel^2 \leq \sum_{j,k}\mid  \langle F,  \phi_{j,k}
\rangle\mid^2 \leq (B_a + C_0b)\parallel F\parallel^2,
\end{align}
for all $F \in (I-P)L^2(S^2)$.  In particular, if $A_a - C_0b > 0$, then
$\left\{\phi_{j,k}\right\}$ is a frame for $(I-P)L^2(S^2)$,
with frame bounds $A_a - C_0b$ and $B_a + C_0b$.
\end{theorem}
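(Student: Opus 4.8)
The plan is to reduce the spin $s$ frame statement to a perturbation of the exact Calderón/Daubechies reproducing identity, exactly as in the scalar case of \cite{gmfr}, using Theorem \ref{locestls} as the key quantitative input. First I would record the exact identity: by the Daubechies condition (\ref{daub}) applied to $\Delta_s$ via spectral theory (the spin analogue of (\ref{daub-spec})), for $F \in (I-P)L^2_s(S^2)$ one has
\begin{equation}
\label{exactid}
A_a\|F\|^2 \le \sum_{j\in\ZZ}\|f(a^{2j}\Delta_s)F\|^2 \le B_a\|F\|^2,
\end{equation}
and $\|f(a^{2j}\Delta_s)F\|^2 = \|\beta_{a^j,F}\|^2 = \int_{S^2}|(\beta_{a^j,F})_R(y)|^2\,dy$ for any chart $R$ containing the point $y$ (the integrand is chart-independent by the spin relation). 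The frame sum, on the other hand, is $\sum_{j,k}\mu(E_{j,k})\,|(\beta_{a^j,F})_{R_{j,k}}(x_{j,k})|^2$, a Riemann-type sum for the same integrals over the partition $\{E_{j,k}\}$. So the entire problem is to bound
\[
\Big|\sum_{j,k}\mu(E_{j,k})\,|(\beta_{a^j,F})_{R_{j,k}}(x_{j,k})|^2 \;-\; \sum_j \int_{S^2}|(\beta_{a^j,F})_R(y)|^2\,dy\Big| \;\le\; C_0 b\,\|F\|^2.
\]

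Next I would do the per-cell estimate. Fix $j,k$ and write $t=a^j$, $x=x_{j,k}$, $R=R_{j,k}$, $E=E_{j,k}$. For $y \in E$, choose $R'$ with $y \in U_{R'}$; then $|(\beta_{t,F})_{R'}(y)|^2$ and $|(\beta_{t,F})_{R}(y)|^2$ agree wherever both charts are defined, so I may compare $(\beta_{t,F})_{R}(x)$ with $(\beta_{t,F})_{R}(y)$ inside the single chart $U_R$ — this is where it matters that $\operatorname{diam}E \le ba^j$ is small, so that $E$ together with $x$ stays in a fixed compact subset of $U_R$ away from the two excluded poles (this requires a preliminary geometric lemma: for $b$ small, one can take the partition cells near a given scale to avoid neighborhoods of the poles, or more simply one replaces $R_{j,k}$ by a rotation whose chart is "centered" at $x_{j,k}$, reducing to finitely many model charts by compactness of $SO(3)$ and the rotation-equivariance of $f(t^2\Delta_s)$). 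Applying the mean value theorem in the $U_R$-coordinates,
\[
\big|(\beta_{t,F})_{R}(x) - (\beta_{t,F})_{R}(y)\big| \le \operatorname{diam}(E)\cdot \sup_{z\in E}\big|\nabla_z (\beta_{t,F})_{R}(z)\big| \le ba^j \cdot \sup_{z}\big|\nabla_z (\beta_{t,F})_{R}(z)\big|.
\]
Now $(\beta_{t,F})_R(z) = \int_{S^2} K_{t,R,R'}(z,w)\,F_{R'}(w)\,dw$, and Theorem \ref{locestls} with one derivative $X=\partial_z$, $Y=I$, applied over a finite cover of $S^2\times S^2$ by product compacta, gives $|XK_{t,R,R'}(z,w)| \le c\,t^{-3}(1+d(z,w)/t)^{-N}$; integrating against $F$ and using that $\int_{S^2} t^{-3}(1+d/t)^{-N}\,dw \lesssim t^{-1}$ (take $N>2$) together with Cauchy–Schwarz gives, roughly, $\|\nabla(\beta_{t,F})_R\|_{L^\infty(E)} \lesssim t^{-1}(\text{local }L^2\text{ mass of }F)$. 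Combining with a similar $0$-derivative bound $|(\beta_{t,F})_R(x)| \lesssim t^{-1}(\text{local }L^2\text{ mass})$, squaring, summing $\mu(E_{j,k})(\cdots)$ over $k$, and invoking the lower bound (\ref{measgeq0}) on $\mu(E_{j,k})$ together with the finite-overlap/almost-orthogonality machinery of \cite{gmcw}, \cite{gmfr} (which controls $\sum_j \|\beta_{a^j,F}\|^2$-type sums by $\|F\|^2$ with constants independent of $a$ in the relevant range), the cell-by-cell errors sum to at most $C_0 b\,\|F\|^2$, with $C_0$ depending only on $f,a,c_0,\delta_0$. The triangle inequality with (\ref{exactid}) then yields (\ref{phijkfr}).

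The main obstacle — and the place where the spin case genuinely differs from the scalar case — is the chart bookkeeping in the per-cell estimate: one cannot globally compare $(\beta_{t,F})_{R_{j,k}}(x_{j,k})$ with $(\beta_{t,F})_R(y)$ because the $R_{j,k}$ vary with $k$ and each chart $U_{R_{j,k}}$ omits two poles, so the derivative bound from Theorem \ref{locestls} is only available on compact subsets of the charts, not uniformly. Handling this cleanly requires exploiting the $SO(3)$-equivariance $\beta_{t,F^R} = (\beta_{t,F})^R$ to normalize each cell: replace $(F, x_{j,k}, R_{j,k})$ by $(F^{R_{j,k}^{-1}}, \text{fixed base point}, I)$ so that the relevant derivative estimates are all taken in a single fixed compact subset of $U_I$, with constants then uniform in $j,k$. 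Once this normalization is set up, the rest is the routine discretization-error summation, essentially identical to \cite{gmfr}. I would also need the parenthetical existence claim for the $E_{j,k}$, but that is a standard greedy packing/covering argument on $S^2$ and is independent of $a,b$; I would cite \cite{gmfr} or sketch it in a line.
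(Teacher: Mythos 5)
Your overall strategy — compare the discretized frame sum to the continuous quadratic form $\sum_j \|f(a^{2j}\Delta_s)F\|^2$ and bound the discretization error by $C_0 b\|F\|^2$ — is the right one, and matches the paper at that level. Your handling of the chart bookkeeping via $SO(3)$-equivariance is also a legitimate way to get uniform constants, essentially equivalent to the paper's fixing of finitely many model rotations $R_m$ and forcing $R_{j,k}=R_{m(j,k)}$. However, there is a genuine gap at the $j$-summation step.

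Trace through your per-cell estimate. For fixed $j$, the cell error you propose is controlled, after Cauchy--Schwarz against the kernel decay from Theorem \ref{locestls}, by
\[
\mu(E_{j,k})\,\big|\,|(\beta_{a^j,F})(x_{j,k})|^2-|(\beta_{a^j,F})(y)|^2\big|
\;\lesssim\; b\,\mu(E_{j,k})\, a^{-2j}\int_{S^2}\bigl(1+d(x_{j,k},w)/a^j\bigr)^{-N}|F(w)|^2\,dw.
\]
Summing over $k$ using $\sum_k\mu(E_{j,k})\bigl(1+d(x_{j,k},w)/a^j\bigr)^{-N}\lesssim a^{2j}$ (disjointness plus (\ref{ptestm})) gives a per-$j$ error of order $b\|F\|^2$, \emph{uniformly in $j$}. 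That does not sum: $\sum_{j\in\ZZ} b\|F\|^2$ diverges. The ``almost-orthogonality machinery which controls $\sum_j\|\beta_{a^j,F}\|^2$'' cannot be invoked here, because once you have passed to $L^\infty$ bounds and absolute values of cell errors you have destroyed exactly the spectral-localization structure that makes $\sum_j\|\beta_{a^j,F}\|^2\approx\|F\|^2$ finite. The per-$j$ errors are not ``$\|\beta_{a^j,F}\|^2$-type''; they are simply $\Theta(b\|F\|^2)$.

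What actually makes the discretization error sum is that the error must be kept as a \emph{bilinear form} $\langle (Q-S)F,F\rangle$ rather than estimated cell-by-cell in $L^\infty$. In the paper's proof, after writing the cell error via a fundamental-theorem-of-calculus parametrization and applying the product rule, the error becomes
\[
b\int_{\mathcal B}\int_0^1\Bigl[\langle S_{\{\varphi^{w,u}_{j,k}\},\{\psi^{w,u}_{j,k}\}}F,F\rangle
+\langle S_{\{\psi^{w,u}_{j,k}\},\{\varphi^{w,u}_{j,k}\}}F,F\rangle\Bigr]\,du\,dw,
\]
a $b$ times a bounded average of \emph{summation operators} $S_{\{\varphi\},\{\psi\}}$ of the form (\ref{sumopdf2}). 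The whole point of Theorem \ref{sumopthm}, proved via the $T(1)$ theorem, is that each such operator is bounded on $L^2_s$ with norm independent of $j,k$-sums: the $T(1)$ hypotheses encode precisely the simultaneous frequency (scale-$j$) and spatial (cell-$k$) almost-orthogonality that your pointwise estimates discard. Your proposal never engages with the summation operator or the $T(1)$ theorem, and without them the $j$-sum cannot be closed. You would also need a separate argument for the tail $j\geq\Omega_b:=\log_a(\delta_0/b)$ where $ba^j\geq\delta_0$ and (\ref{measgeq0}) gives no lower bound on $\mu(E_{j,k})$; the paper handles that tail (term $II$) by the rapid $L^2$-decay of $W_{x,t,R}$ for $t>1$, which your sketch omits.
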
 
%Then, for $a>1$ sufficiently
%close to $1$ that $A_a > 0$, we can find a discrete set $\{x_{j,k}\} \subseteq S^2$ 
%such that, for certain weights 
%$\mu_{j,k}$ and any selection $\{R_{j,k}\} \subseteq SO(3)$ with
%$x_{j,k} \in U_{R_{j,k}}$, if we set
%$\phi_{j,k}:=  W_{x_{j,k},a^j,R_{j,k}}$, then
%\begin{align} 
%\label{phijkfr}
%A\parallel F\parallel^2 \leq \sum_{j,k} \mu_{j,k} \mid  \langle F,  \phi_{j,k}
%\rangle\mid^2 \leq B \parallel F\parallel^2,
%\end{align}
%for any spin function $F\in (I-P)L_s^2(S^2)$; here $B/A$ is arbitrarily 
%close to $B_a/A_a$.

Note that the choice of $R_{j,k}$ is irrelevant for (\ref{phijkfr}),
since a change in $R_{j,k}$ only multiplies $\phi_{j,k}$ by a factor of absolute 
value $1$.\\

As in the scalar case, we shall also show:
\begin{corollary}
\label{cptspt0}
In Theorem \ref{framain0},  for appropriate $f$ and $C > 0$, one has supp$\phi_{j,k} \subseteq
B(x_{j,k},Ca^j)$.  Thus, for appropriate $a,b$, 
$\left\{\phi_{j,k}\right\}_{j,k}$ is a nearly tight frame for $(I-P)L^2_s(S^2)$,
with supp$(\phi_{j,k}) \subseteq B(x_{j,k},Ca^j)$.
\end{corollary}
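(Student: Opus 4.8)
The plan is to mimic exactly the argument already sketched in the introduction for the scalar case, using the representation of $f(t^2\Delta_s)$ through the wave operator $\cos(\tau t\sqrt{\Delta_s})$ and the finite propagation speed of the corresponding wave equation on sections of the line bundle ${\bf L}^s$. Concretely, I would first choose $f$ of the form $f(\xi^2) = g(\xi)$ with $g$ an even Schwartz function on $\RR$ whose Fourier transform is supported in $(-1,1)$; this is the same class of $f$ used in the course of the proof of Lemma 4.1 of \cite{gmcw}, and one must check (as was done there) that such $f$ can still be chosen to satisfy the Daubechies condition $(\ref{daub})$, so that Theorem \ref{framain0} applies. By the Fourier inversion theorem,
\begin{equation}
\label{cptspt-fourier}
f(t^2\Delta_s) = g(t\sqrt{\Delta_s}) = c\int_{-1}^{1} \hat{g}(\tau)\,\cos(\tau t\sqrt{\Delta_s})\,d\tau ,
\end{equation}
where $\sqrt{\Delta_s}$ is defined by the spectral theorem on $(I-P)L^2_s(S^2)$; here $\Delta_s$ has nonnegative spectrum since $\lambda_{ls} \geq 0$ for $l \geq |s|$, and on the null space ${\cal H}_{|s|,s}$ the operator $f(t^2\Delta_s)$ vanishes because $f(0)=0$.

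Next I would invoke finite propagation speed for the hyperbolic equation $\partial_\tau^2 u + \Delta_s u = 0$ on $C^\infty_s(S^2)$. This is a second-order operator whose principal symbol agrees with that of the scalar Laplacian (the operators $\edth,\bedth$ are first-order, and $\Delta_s = -\bedth\edth$ or $-\edth\bedth$ has principal symbol $|\xi|^2$), so the standard energy-estimate proof of finite propagation speed goes through verbatim for sections of ${\bf L}^s$: if the initial data of $u$ is supported in a set $S$, then at time $\tau$ the solution is supported in the $|\tau|$-neighborhood of $S$. Applying this to $(\ref{cptspt-fourier})$ with $t$ replaced by $a^j$ shows that the kernel $K_{a^j,R,R'}(x,y)$ vanishes whenever $d(x,y) > a^j$, i.e. $\supp K_{a^j} \subseteq \{(x,y): d(x,y)\leq a^j\}$. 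Since by $(\ref{needdf0})$ and the remark following it we have $(W_{xtR})_{R'}(y) = \overline{K}_{t,R,R'}(x,y)$, and $\phi_{j,k} = \mu(E_{j,k})^{1/2} W_{x_{j,k},a^j,R_{j,k}}$ differs from $W_{x_{j,k},a^j,R_{j,k}}$ only by a positive scalar, each component $(\phi_{j,k})_{R'}$ is supported in $\{y: d(x_{j,k},y)\leq a^j\} = \overline{B(x_{j,k},a^j)}$; taking $C$ slightly larger than $1$ (or simply $C=1$ with closed balls) gives $\supp \phi_{j,k} \subseteq B(x_{j,k},Ca^j)$. The second assertion is then immediate: for this same $f$, Theorem \ref{framain0} produces, for $a$ close to $1$ and $b$ small, a frame for $(I-P)L^2_s(S^2)$ with bounds $A_a - C_0 b$ and $B_a + C_0 b$ arbitrarily close to $1$ (recall $B_a/A_a \to 1$ nearly quadratically), so $\{\phi_{j,k}\}$ is the desired nearly tight frame with the stated support property.

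The main obstacle — and really the only nontrivial point — is justifying finite propagation speed in the spin setting. In the scalar case this is a classical fact, but for sections of the complex line bundle ${\bf L}^s$ one should be a little careful, because the "functions" $f_R$ are only defined on the charts $U_R = S^2\setminus\{R{\bf N},R{\bf S}\}$, which miss two points. I expect the cleanest route is to work with the trivialization picture of Proposition \ref{strsp} (which is why that proposition is singled out as the one place the bundle viewpoint is used): $\Delta_s$ is an honest elliptic operator on smooth sections of a genuine line bundle over the compact manifold $S^2$, its principal symbol is scalar of the correct form, and the classical local energy-estimate argument (multiply the wave equation by $\partial_\tau \bar u$, integrate over a backward light cone, use the divergence theorem) depends only on the principal symbol and on the bundle metric being parallel enough — which can be arranged — so it transfers without change. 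Once finite propagation speed is in hand, everything else is bookkeeping identical to the scalar case treated in \cite{gmcw}.
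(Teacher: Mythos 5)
Your proposal is correct and takes essentially the same approach as the paper's. The paper proves Corollary \ref{cptspt0} (as Corollary \ref{cptspt}) by choosing $f(\xi^2)=G(\xi)$ with $G$ even, $\hat G$ supported in $(-1,1)$, and invoking Proposition \ref{finprops}, the finite-propagation-speed statement for $g(t\sqrt{\Delta_s})$, which is precisely the lemma you identify as the nontrivial ingredient; the paper establishes that proposition by passing to local trivializations where $\Delta_{sR}$ is an elliptic second-order operator with the correct principal symbol and citing the Euclidean finite-propagation-speed theorem (Taylor, Theorem 4.5(iii)), then running a short local-to-global bootstrap in $t$. One small caution: the paper's Proposition \ref{finprops} asserts a support radius $C|t|$ for an unspecified $C>0$ (the constant absorbs coordinate distortion in the bootstrap), so you should not claim $C=1$; this costs nothing, since the corollary only requires some $C$.
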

Indeed, this will occur, as in the scalar case, if $f$ has the form $f(\xi^2) = g(\xi)$ for some even $g \in {\cal S}(\RR)$
with \supp$\hat{g} \subseteq (-1,1)$. 

The plan of the paper is as follows. 
In section 2, we present a general version of the $T(1)$ theorem for manifolds, which will be an
important tool in the later sections.  In section 3, we use the $T(1)$ theorem to prove a summation 
theorem, which explains when certain kinds of operators on spin functions are bounded on $L^2_s$.
In section 4, we then use the results of section 3 to obtain our main result on frames.  Specifically,
Theorem \ref{framain0} is proved in a more general form in Theorem \ref{framain}.  (In fact, Theorem
\ref{framain} was asserted in \cite{GeMari08}, as Theorem 6.3 of that article; that article referred
to this one for the proof.  See also remark 3 following Theorem 6.3 in \cite{GeMari08}.)
Theorem \ref{locestls} was proved for $f$ with compact support away from $0$ in \cite{GeMari08}; it will 
be proved for general $f$ in section 5. Corollary \ref{cptspt0} will also be proved in section 5, as Corollary 
\ref{cptspt}.

To fully understand this article, it would be helpful if the reader were familiar with sections 1 through 4 of our article
\cite{gmcw}, and up through Theorem 2.4 of \cite{gmfr}.  The facts that we shall need about spin, have almost all
already been explained in this introduction.

\section{A $T(1)$ Theorem for Manifolds}
\label{t1}

As in \cite{gmfr}, we will need an appropriate version of the $T(1)$ 
theorem for manifolds.  We stated a version in Theorem 2.2 of 
\cite{gmfr}.  In this section, we state and prove a version with fewer hypotheses.
We shall work on general 
smooth compact oriented Riemannian manifolds ${\bf M}$.  The notation and
concepts from the introduction will not be needed in this section. 

Let us first review the version of the $T(1)$ theorem
for $\R^n$ in Stein's book \cite{stha}.  (Actually the version below is slightly different
from Stein's; see the appendix of our article \cite{gmst}, with $G = \R^n$ there,
for an explanation of the differences and how they are resolved.)

In $\R^n$, if $r > 0$, we call a $C^1$ function $\omega$ an $r$-bump function if 
it is supported in a ball of radius $r$, if $\|\omega\|_{\infty} \leq 1$,
and if $\|\partial_j \omega\|_{\infty} \leq 1/r$ for each partial derivative
$\partial_j$.  

\begin{theorem}
\label{t1rn}
There exists $C_0 > 0$ as follows.
Suppose $T: C^1_c(\R^n) \to L^2(\R^n)$ is linear, and has a formal adjoint 
$T^*: C^1_c(\R^n) \to L^2(\R^n)$.  Suppose:
\begin{itemize}
\item[$(i)$] $\|T \omega\|_2 \leq A r^{n/2}$ and $\|T^* \omega\|_2 \leq A r^{n/2}$ 
for all $r$-bump functions $\omega$;
\item[$(ii)$] There is a kernel $K(x,y)$, $C^1$ off the diagonal, 
such that if $F \in C_c^{1}$, 
then for $x$ outside the support of $F$, $(TF)(x) = \int K(x,y)F(y)dy$; and 
\item[$(iii)$] Whenever $X$ $($acting in the $x$ variable$)$ and $Y$ (acting in the 
$y$ variable) are in $\{id,\partial_1,\ldots,\partial_n\}$, and at least one of $X$ and $Y$ is the identity map, we have
\[ |XYK(x,y)| \leq A\;|x-y|^{-(n+\deg X + \deg Y)} \]
for all $x, y \in \R^n$ with $x \neq y$;
\end{itemize}
then $T$ extends to a bounded operator on $L^2(\R^n)$, and $\|T\| \leq C_0 A$.
\end{theorem}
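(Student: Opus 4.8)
The plan is to run the classical David--Journ\'e argument. First one shows that hypotheses $(i)$ and $(iii)$ already force $T1$ and $T^*1$ to lie in $\mathrm{BMO}(\R^n)$ with norm $\lesssim A$; then one subtracts two paraproducts to reduce to the normalized case $T1 = T^*1 = 0$; and finally one disposes of that case by a Littlewood--Paley almost-orthogonality (Cotlar--Stein/Schur) argument, using nothing about the kernel beyond the standard Calder\'on--Zygmund bounds $|K(x,y)|\lesssim A|x-y|^{-n}$ and $|\nabla_x K|,|\nabla_y K|\lesssim A|x-y|^{-n-1}$ contained in $(iii)$ (so the fact that $(iii)$ only controls $X,Y$ one at a time is harmless).

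\emph{Step 1 ($T1,T^*1\in\mathrm{BMO}$).} Fix a ball $B=B(x_0,r)$ and write $1=\varphi+(1-\varphi)$ with $\varphi\in C_c^\infty$ equal to $1$ on $2B$ and supported in $4B$; a fixed dimensional multiple of $\varphi$ is a $(4r)$-bump, so $(i)$ gives $\|T\varphi\|_{L^2}\lesssim A r^{n/2}$, whence $|B|^{-1}\int_B|T\varphi|\lesssim A$. Since $1-\varphi$ vanishes on $2B$, for $x\in B$ the kernel representation $(ii)$ applies and the usual Calder\'on--Zygmund estimate, using the $\nabla_x K$ bound, gives that the absolutely convergent integral $T(1-\varphi)(x)-T(1-\varphi)(x_0)$ is $\lesssim A$ for all $x\in B$. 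Subtracting this constant yields the required mean-oscillation bound, and the same for $T^*$. (As usual, $T1$ and $T^*1$ are interpreted as elements of $\mathrm{BMO}$ modulo constants, via pairings against mean-zero test functions.)

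\emph{Step 2 (reduction to $T1=T^*1=0$).} For $b\in\mathrm{BMO}$ let $\Pi_b$ be the standard paraproduct $\Pi_b f=\sum_k Q_k\big((Q_kb)(P_kf)\big)$, with $(Q_k)$ a Littlewood--Paley family and $(P_k)$ the associated averages. It is classical that $\Pi_b$ has a Calder\'on--Zygmund kernel satisfying $(ii)$--$(iii)$ with constant $\lesssim\|b\|_{\mathrm{BMO}}$, that $\|\Pi_b\|_{L^2\to L^2}\lesssim\|b\|_{\mathrm{BMO}}$ (via the Carleson measure estimate for $|Q_kb|^2$ together with Carleson's embedding theorem), that $\Pi_b$ satisfies the bump bound $(i)$ with constant $\lesssim\|b\|_{\mathrm{BMO}}$, and that $\Pi_b1=b$ while $\Pi_b^*1=0$. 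Put $\widetilde T=T-\Pi_{T1}-(\Pi_{T^*1})^*$; then $\widetilde T$ still satisfies $(i)$--$(iii)$ with $A$ replaced by $C_nA$, and now $\widetilde T1=0=\widetilde T^*1$. Since $\|\Pi_{T1}\|+\|\Pi_{T^*1}\|\lesssim A$ by Step 1, it suffices to prove $\|T\|\le C_0A$ under the extra assumption $T1=T^*1=0$.

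\emph{Step 3 (almost orthogonality) and the main obstacle.} Assume $T1=T^*1=0$. Take a Littlewood--Paley decomposition $I=\sum_{k\in\ZZ}\Delta_k$ (band-limited, so the $\widehat{\Delta_k}$ live in disjoint annuli) and write $T=\sum_{j,k}\Delta_jT\Delta_k=\sum_{l\in\ZZ}T_l$, $T_l=\sum_k\Delta_{k+l}T\Delta_k$. Because the products $(\Delta_{k+l}T\Delta_k)(\Delta_{k'+l}T\Delta_{k'})^*$ and $(\Delta_{k+l}T\Delta_k)^*(\Delta_{k'+l}T\Delta_{k'})$ vanish once $|k-k'|\ge2$, the Cotlar--Stein lemma gives $\|T_l\|\lesssim\sup_k\|\Delta_{k+l}T\Delta_k\|=\|\Delta_lT\Delta_0\|$ by dilation invariance, and summing a geometric series it suffices to prove the single-scale estimate $\|\Delta_lT\Delta_0\|_{L^2\to L^2}\le CA\,2^{-\delta|l|}$ for some $\delta=\delta(n)>0$. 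This is proved by Schur's test applied to the kernel $G_l(x,y)=\big(\Delta_lT[\psi_0(\cdot-y)]\big)(x)$, where $\psi_0(\cdot-y)$ is a mean-zero unit-scale bump: one splits according to whether $|x-y|$ exceeds a fixed multiple of $\max(1,2^{-l})$. On the far region $G_l$ is an honest double integral against $K$ (legitimate by $(ii)$), and using $(iii)$ together with the vanishing mean of the finer of the two Littlewood--Paley bumps one gets a pointwise bound with spatial decay $|x-y|^{-n-1}$ beyond scale $\max(1,2^{-l})$ whose integral in $x$, or in $y$, is $\lesssim A2^{-|l|}$. The delicate point --- and the only place where real work is needed --- is the near region, where the kernel is unavailable: there one must combine the weak boundedness packaged in $(i)$ with the cancellation conditions $T1=T^*1=0$ (which supply exactly the extra cancellation needed to beat the scale mismatch between $1$ and $2^{-l}$) to bound the relevant Schur integrals by $\lesssim A2^{-|l|}$. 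Everything else --- Step 1, the paraproduct facts in Step 2, the Cotlar--Stein bookkeeping, the far-region estimate, and a few routine regularizations needed to let $T$ act on the non-compactly-supported functions produced by the decomposition (e.g. proving a priori estimates for truncated operators first) --- is standard, and one obtains $\|T\|\le C_0A$ with $C_0=C_0(n)$.
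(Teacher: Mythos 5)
The paper does not actually prove Theorem \ref{t1rn}: it cites it as a (slightly modified) version of the $T(1)$ theorem from Stein's \emph{Harmonic Analysis} \cite{stha}, with the appendix of \cite{gmst} reconciling the differences between that formulation and the one stated here. So there is no ``paper's own proof'' to compare yours against; the relevant comparison is with the classical David--Journ\'e/Stein proof, which is indeed the route you sketch.

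The outline of Steps 1 and 2 (BMO bound on $T1,T^*1$, paraproduct subtraction to reduce to $T1=T^*1=0$) is the standard and correct plan. The genuine gap is in Step 3, and you flag it yourself: you write that ``one must combine the weak boundedness packaged in $(i)$ with the cancellation conditions $T1=T^*1=0$ \dots to bound the relevant Schur integrals,'' call it ``the only place where real work is needed,'' and then do not carry it out. That near-diagonal estimate is precisely the place where hypothesis $(i)$ does real work, and also precisely where one must cope with the fact that $(iii)$ gives no mixed $\partial_x\partial_y$ bound on $K$; asserting that the two inputs ``supply exactly the extra cancellation needed'' is not an argument. A proof has to produce the $2^{-\delta|l|}$ gain from $(i)$ and the $T1=T^*1=0$ cancellation explicitly, and until that is written out the proposal is a roadmap, not a proof of $\|T\|\le C_0A$. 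A secondary slip: the claim $\sup_k\|\Delta_{k+l}T\Delta_k\|=\|\Delta_lT\Delta_0\|$ ``by dilation invariance'' is false as stated, because $T$ itself is not assumed dilation-invariant; what is dilation-invariant is the \emph{class} of operators satisfying $(i)$--$(iii)$ with constant $A$, so the correct statement is that after rescaling it suffices to bound $\|\Delta_lT\Delta_0\|$ uniformly over that class. You should also say a word about why the Littlewood--Paley truncations of a general $L^2$ function may be fed into $T$ (your parenthetical ``a few routine regularizations'' gestures at this but, again, leaves it undone).
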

We shall need a localized version of this result:
\begin{corollary}
\label{t1rnloc}
Let $B_1, B_2$ be open balls in $\R^n$, of radius $r_1, r_2$ respectively.  Suppose
$K_1 \subseteq B_1$ and $K_2 \subseteq B_2$ are compact.
Then there exists $C_0 > 0$ as follows.

Suppose $T: C^1_c(B_1) \to L^2(B_2)$ is linear, and has a formal adjoint 
$T^*: C^1_c(B_2) \to L^2(B_1)$.   Suppose:
\begin{itemize}
\item[$(a)$] If $f \in C^1_c(B_1)$,
then $Tf = 0$ a.e.\ on $K_2^c$, while $Tf \equiv 0$ if supp$f$ is disjoint from $K_1$.
\item[$(i)$] $\|T \omega\|_2 \leq A r^{n/2}$ for all $r$-bump functions, 
$r \leq r_1$, supported in $B_1$ 
and $\|T^* \omega\|_2 \leq A r^{n/2}$ for all $r$-bump functions,
$r \leq r_2$, supported in $B_2$, and
\item[$(ii)$] There is a kernel $K(x,y)$, $C^1$ off the diagonal, 
such that if $F \in C_c^{1}(B_1)$,
then for $x$ outside the support of $F$, $(TF)(x) = \int K(x,y)F(y)dy$; and 
\item[$(iii)$] Whenever $X$ $($acting in the $x$ variable$)$ and $Y$ (acting in the 
$y$ variable) are in $\{id,\partial_1,\ldots,\partial_n\}$, and at least one of $X$ and $Y$ is the identity map, we have
\[ |XYK(x,y)| \leq A\;|x-y|^{-(n+\deg X + \deg Y)} \]
for all $x \in B_1$, $y \in B_2$ with $x \neq y$;
\end{itemize}
then $T$ extends to a bounded operator on $L^2(B_1)$ into $L^2(B_2)$, and $\|T\| \leq C_0 A$.\\
(b) In (a), we can replace the condition (i) by 
\begin{itemize}
\item[$(i)'$] $\|T \omega\|_{\infty} \leq A$ for all $r$-bump functions, 
$r \leq r_1$, supported in $B_1$, and $\|T^* \omega\|_{\infty} \leq A$,
for all $r$-bump functions, $r \leq r_2$, supported in $B_2$.
\end{itemize}
and the result still follows.
\end{corollary}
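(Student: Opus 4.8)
The plan is to reduce both parts to the global $T(1)$ theorem, Theorem~\ref{t1rn}, by conjugating $T$ with smooth cutoffs, and then to obtain (b) from (a) by upgrading the $L^\infty$ bound $(i)'$ to the $L^2$ bound $(i)$. For part (a) I would fix real-valued $\chi_1\in C^\infty_c(B_1)$ and $\chi_2\in C^\infty_c(B_2)$ with $0\le\chi_i\le 1$, $\chi_1\equiv 1$ on a neighborhood of $K_1$ and $\chi_2\equiv 1$ on a neighborhood of $K_2$, and define $\widetilde T\colon C^1_c(\R^n)\to L^2(\R^n)$ by $\widetilde T g=\chi_2\cdot T(\chi_1 g)$, extended by zero off $B_2$; this makes sense since $\chi_1 g\in C^1_c(B_1)$ and, by (a), $T(\chi_1 g)$ vanishes a.e.\ off the compact set $K_2\Subset B_2$. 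A one-line computation with the adjoint relation for $T,T^*$ on $C^1_c$ shows that the formal adjoint is $\widetilde T^{\,*}g=\chi_1\cdot T^*(\chi_2 g)$. The key observation is that $\widetilde T$ is essentially $T$ itself: for $f\in C^1_c(B_1)$ the function $(1-\chi_1)f$ is supported in $K_1^c$, so $T((1-\chi_1)f)=0$ by (a), whence $T(\chi_1 f)=Tf$; and $Tf$ vanishes a.e.\ off $K_2$, where $\chi_2\equiv 1$, so $\widetilde T f$ equals $Tf$ extended by zero. Hence once $\widetilde T$ is known to be bounded on $L^2(\R^n)$ we get $\|Tf\|_{L^2(B_2)}\le\|\widetilde T\|\,\|f\|_{L^2(B_1)}$ for all $f\in C^1_c(B_1)$, and $T$ extends by density.

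Next I would verify hypotheses (i)--(iii) of Theorem~\ref{t1rn} for $\widetilde T$, with a constant that is a fixed multiple $C'A$ of $A$, where $C'$ depends only on $\chi_1,\chi_2,r_1,r_2$ (hence only on $B_1,B_2,K_1,K_2$). For (i): if $\omega$ is an $r$-bump function on $\R^n$, then $\chi_1\omega$ is at most $C_{\chi_1}:=\|\chi_1\|_\infty+r_1\max_j\|\partial_j\chi_1\|_\infty$ times a $\min(r,r_1)$-bump function supported in $B_1$, so hypothesis (i) of the Corollary gives $\|\widetilde T\omega\|_2\le\|T(\chi_1\omega)\|_2\le C_{\chi_1}A\min(r,r_1)^{n/2}\le C_{\chi_1}Ar^{n/2}$, and similarly for $\widetilde T^{\,*}$ using $\chi_2,r_2$. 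For (ii) and (iii): the kernel of $\widetilde T$ is $\widetilde K(x,y)=\chi_2(x)K(x,y)\chi_1(y)$, which is $C^1$ off the diagonal and supported in the compact set $\supp\chi_2\times\supp\chi_1$; expanding $XY\widetilde K$ by the Leibniz rule for $X,Y\in\{\mathrm{id},\partial_1,\dots,\partial_n\}$ with at least one equal to $\mathrm{id}$ (the only cases Theorem~\ref{t1rn} needs), each term is a bounded derivative of a cutoff times some $X'Y'K$ in which again at least one of $X',Y'$ is $\mathrm{id}$, so by hypothesis (iii) of the Corollary, and since $|x-y|$ is bounded on $\supp\widetilde K$ (so any deficient factor $|x-y|^{-(n+k)}$ is dominated by a constant times $|x-y|^{-(n+\deg X+\deg Y)}$), each term is $\le C'A|x-y|^{-(n+\deg X+\deg Y)}$. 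Theorem~\ref{t1rn} then yields $\|\widetilde T\|\le C_0C'A$, proving (a).

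For part (b) it suffices to show that (a), $(i)'$, (ii), (iii) together imply (i) (with a larger constant), and then invoke part (a). Given an $r$-bump $\omega$ ($r\le r_1$) supported in $B(y_0,r)\subseteq B_1$, I would split $T\omega$ into its restrictions to $B(y_0,2r)$ and to its complement. On $B(y_0,2r)$ use $(i)'$ crudely: $\|T\omega\cdot\mathbf{1}_{B(y_0,2r)}\|_2\le\|T\omega\|_\infty\,|B(y_0,2r)|^{1/2}\le CAr^{n/2}$. For $x\notin B(y_0,2r)$ one has $x\notin\supp\omega$, so $T\omega(x)=\int K(x,y)\omega(y)\,dy$, and since $|x-y|\ge|x-y_0|/2$ for $y\in B(y_0,r)$, hypothesis (iii) gives $|T\omega(x)|\le CAr^n|x-y_0|^{-n}$, hence
\[
\int_{|x-y_0|\ge 2r}|T\omega(x)|^2\,dx\ \le\ CA^2r^{2n}\!\int_{2r}^\infty\!\rho^{-n-1}\,d\rho\ =\ CA^2r^n .
\]
Thus $\|T\omega\|_2\le CAr^{n/2}$, and the same argument applied to $T^*$ (whose kernel is $\overline{K(y,x)}$, satisfying the symmetric estimates) gives the dual bound; this establishes (i).

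I expect the one substantive point to be the estimate in part (b): the naive bound $\|T\omega\|_2\le\|T\omega\|_\infty|K_2|^{1/2}$ is useless for small $r$ because it does not decay like $r^{n/2}$, so one must use the off-diagonal kernel estimate to localize $T\omega$ away from $\supp\omega$ before integrating. A secondary technical point requiring care is that the hypotheses transferred to $\widetilde T$ hold with a constant that is genuinely a fixed multiple of $A$, uniformly in the bump radius $r$; the rest is routine constant-chasing and bookkeeping with the cutoffs.
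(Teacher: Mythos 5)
Your proposal is correct and follows essentially the same route as the paper: for (a), conjugate $T$ by smooth cutoffs $\chi_1,\chi_2$ (which the paper calls $\psi,\phi$) that are $\equiv 1$ near $K_1,K_2$, check that $T=\chi_2 T\chi_1$ on $C^1_c(B_1)$, and verify hypotheses (i)--(iii) of Theorem~\ref{t1rn} for $\chi_2T\chi_1$ with kernel $\chi_2(x)K(x,y)\chi_1(y)$; for (b), upgrade $(i)'$ to $(i)$ by splitting $T\omega$ into the ball $|x-y_0|<2r$ (bounded via $(i)'$) and its complement (bounded via the off-diagonal kernel estimate from (iii)). The calculations, including the far-field bound $|T\omega(x)|\lesssim Ar^n|x-y_0|^{-n}$ and the resulting $\|T\omega\|_2\lesssim Ar^{n/2}$, match the paper's proof step for step.
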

{\bf Proof}  Select $\phi, \psi \in C_c^{\infty}(\R^n)$, that supp$\phi \subseteq B_2$,
$\phi \equiv 1$ in a neighborhood of $K_2$, and 
supp$\psi \subseteq B_1$, $\psi \equiv 1$ in a neighborhood of $K_1$.  Define
$\phi T \psi: C^1(\R^n) \to L^2(\R^n)$ by $(\phi T \psi)f = \phi T (\psi f)$. Then 
   $T = \phi T \psi$ on $C_c^1(B_1)$.  For (a),
we need only verify that $\phi T \psi$ satisfies the hypotheses of Theorem \ref{t1rn}.
Evidently $\phi T \psi: C^1(\R^n) \to L^2(\R^n)$ is well-defined, and has
formal adjoint $\overline{\psi}T^*\overline{\phi}$.   Note that for some $C > 0$,
if $\omega$ is an $r$-bump function, then $\psi \omega/C$ is an $r$-bump function 
(if $r \leq r_1$), while $\psi \omega/C$ is an $r_1$-bump function (if $r > r_1$).
It is evident, then, that $\phi T\psi$ satisfies hypothesis (i) of Theorem \ref{t1rn}.
Hypotheses (ii) and (iii) of that Theorem are also satisfied for $\phi T\psi$, whose
kernel is $\phi(x)K(x,y)\psi(y)$.  This proves $(a)$.

For $(b)$, we show that $(i)$ holds (possibly with a different $A$).  
Suppose then that $\omega$ is a bump function for the ball $\{y: |y-x_0| < r\}$,
supported in $B_1$. If $|x-x_0| \geq 2r$, then

\[ \left|(T\omega)(x)\right| \leq \int \left| K(x,y)\omega(y)\right|dy \leq
C \int_{|y-x_0| \leq r} |x-y|^{-n}dy 
\leq Cr^n\max_{|y-x_0| \leq r} |x-y|^{-n}
\leq Cr^n |x-x_0|^{-n}. \]

Thus, by $(i)'$,
\begin{align}\notag
\|T\omega\|_2^2 &= \int_{|x-x_0|<2r} |[ T \omega ](x)|^2 dx
+ \int_{|x-x_0| \geq 2r}  |[ T  \omega ](x)|^2 dx\\\notag
&\leq
C\left[(2r)^n+  r^{2n}\int_{|x-x_0| \geq 2r} |x-x_0|^{-2n}dx\right]\\\notag
&\leq
C[ r^n + r^{2n} r^{-n} ]= Cr^n,
\end{align}
This shows $(i)$ for $T$; similarly for $T^*$.
This completes the proof.\\

Now let ${\bf M}$ be a smooth compact oriented Riemannian manifold of dimension $n$; we will use
Corollary \ref{t1rnloc} to establish a useful version of the $T(1)$ theorem for ${\bf M}$.\\

We shall need the following basic facts, from Section 3 of \cite{gmcw}, about ${\bf M}$ and its
geodesic distance $d$.
For $x \in {\bf M}$, we let $B(x,r)$ denote the ball $\{y: d(x,y) < r\}$.

\begin{proposition}
\label{ujvj}
Cover ${\bf M}$ with a finite collection of open sets $U_i$  $(1 \leq i \leq I)$,
such that the following properties hold for each $i$:
\begin{itemize}
\item[$(i)$] there exists a chart $(V_i,\phi_i)$ with $\overline{U}_i
\subseteq V_i$; and
\item[$(ii)$] $\phi_i(U_i)$ is a ball in $\RR^n$.
\end{itemize}
Choose $\delta > 0$ so that $3\delta$ is a Lebesgue number for the covering
$\{U_i\}$.  Then, there exist $c_1, c_2 > 0$ as follows:\\
For any $x \in {\bf M}$, choose any $U_i \supseteq B(x,3\delta)$.  Then, in
the coordinate system on $U_i$ obtained from $\phi_i$, 
\begin{equation}
\label{rhoeuccmp2}
d(y,z) \leq c_2|y-z|
\end{equation}
for all $y,z \in U_i$; and 
\begin{equation}
\label{rhoeuccmp}
c_1|y-z| \leq d(y,z) 
\end{equation}
for all $y,z \in B(x,\delta)$.
\end{proposition}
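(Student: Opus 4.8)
My plan is to prove this as a quantitative, uniform version of the elementary fact that inside a fixed chart the Riemannian distance $d$ and the Euclidean coordinate distance are comparable; the only real content is making the constants independent of $x$, which is exactly what the finiteness of the cover $\{U_i\}$ and the $3\delta$ Lebesgue-number hypothesis are for. First I would fix, for each $i$, constants $0 < \lambda_i \le \Lambda_i < \infty$ such that in $\phi_i$-coordinates $\lambda_i|v|^2 \le g_p(v,v) \le \Lambda_i|v|^2$ for every $p$ in the compact set $\overline{U}_i \subseteq V_i$ and every tangent vector $v$ at $p$; these are just uniform eigenvalue bounds for the (smooth, positive-definite) metric matrix $(g_{jk}(p))$ on $\overline{U}_i$. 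I would also let $D_i < \infty$ be the Euclidean diameter of the ball $\phi_i(U_i)$. Since $1 \le i \le I$ with $I$ finite, the numbers $c_2 := \max_i \Lambda_i^{1/2}$ and $c_1 := \min_i \min(\lambda_i^{1/2},\, \delta/D_i)$ are positive and finite, and I claim they work for every $x$.

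For the upper bound, fix $x$ and some $U_i \supseteq B(x,3\delta)$, and let $y,z \in U_i$. Since $\phi_i(U_i)$ is a ball it is convex, so the Euclidean segment joining the coordinates of $y$ and $z$ stays inside $U_i$; this segment is an admissible curve in ${\bf M}$ from $y$ to $z$, and by the pointwise bound its $g$-length is at most $\Lambda_i^{1/2}|y-z|$. Hence $d(y,z) \le \Lambda_i^{1/2}|y-z| \le c_2|y-z|$.

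For the lower bound, let $y,z \in B(x,\delta)$ and let $\gamma$ be any piecewise-$C^1$ path in ${\bf M}$ from $y$ to $z$; it suffices to show $\mathrm{length}_g(\gamma) \ge c_1|y-z|$ and then take the infimum over $\gamma$. I would split into two cases. If $\gamma$ stays inside $B(x,2\delta)$, then it lies in $U_i$ (as $B(x,2\delta) \subseteq B(x,3\delta) \subseteq U_i$), so in coordinates $\mathrm{length}_g(\gamma) \ge \lambda_i^{1/2}\,\mathrm{length}_{\mathrm{euc}}(\gamma) \ge \lambda_i^{1/2}|y-z|$. If instead $\gamma$ leaves $B(x,2\delta)$, choose the first parameter $t_0$ with $d(x,\gamma(t_0)) = 2\delta$; then $\mathrm{length}_g(\gamma) \ge \mathrm{length}_g(\gamma|_{[0,t_0]}) \ge d(y,\gamma(t_0)) \ge d(x,\gamma(t_0)) - d(x,y) > 2\delta - \delta = \delta$, while $|y-z| \le D_i$ since both points lie in the coordinate ball $\phi_i(U_i)$; hence $\mathrm{length}_g(\gamma) > \delta \ge (\delta/D_i)|y-z|$. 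In either case $\mathrm{length}_g(\gamma) \ge c_1|y-z|$.

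The one step that needs care — and the reason for the slightly unusual $3\delta$/Lebesgue-number formulation — is the lower bound: because $d$ is the \emph{global} geodesic distance, a competing path $\gamma$ may wander over all of ${\bf M}$, whereas the naive coordinate estimate only controls paths confined to one chart. The case split neutralizes this: any path that strays out of $B(x,2\delta)$ is automatically long (length $> \delta$), and the extra room in $B(x,3\delta) \subseteq U_i$ guarantees that the remaining ("short") competitors are exactly those confined to a chart, where the metric comparison applies. Everything else is routine, and the resulting $c_1, c_2$ evidently depend only on the fixed finite data $\{(\lambda_i,\Lambda_i,D_i)\}_{i=1}^I$ and on $\delta$, not on $x$.
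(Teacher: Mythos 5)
The paper does not actually prove this proposition: it is quoted verbatim from Section 3 of \cite{gmcw}, with no argument reproduced here, so there is no in-text proof to compare against. Your proof is correct and is the natural (and almost certainly the cited) argument: compactness of $\overline{U}_i\subseteq V_i$ gives uniform two-sided eigenvalue bounds $\lambda_i,\Lambda_i$ on the metric matrix, finiteness of the cover lets you take extrema over $i$, convexity of the coordinate ball gives the upper bound $d(y,z)\le \Lambda_i^{1/2}|y-z|$, and for the lower bound your dichotomy (a competing path either stays in $B(x,2\delta)\subseteq U_i$, where the pointwise comparison applies, or must have $g$-length $>\delta\ge(\delta/D_i)|y-z|$) is exactly the right way to handle the fact that $d$ is a global infimum. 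You also correctly read the paper's (ball-radius) convention for ``Lebesgue number,'' which is what makes ``choose any $U_i\supseteq B(x,3\delta)$'' legitimate. No gaps.
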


We fix collections $\{U_i\}$, $\{V_i\}$, $\{\phi_i\}$ and also $\delta$ as in 
Proposition \ref{ujvj}.  Let $\mu$ be the measure on ${\bf M}$
arising integration with respect to the volume form on ${\bf M}$.
Then: notation as in Proposition \ref{ujvj}, 
there exist $c_3, c_4 > 0$, such that, whenever $x \in {\bf M}$ and $0 < r \leq \delta$,
\begin{equation}
\label{ballsn}
c_3r^n \leq \mu(B(x,r)) \leq c_4r^n
\end{equation}

We fix a finite set
${\mathcal P}$ of real $C^{\infty}$ vector fields on ${\bf M}$, whose
elements span the tangent space at each point.  We let
${\mathcal P}_0 = {\mathcal P} \cup \{\mbox{ the identity map}\}$.\\

Say now $x \in {\bf M}$, and $r > 0$.  We say that a function $\omega \in 
C^1({\bf M})$ is a {\em bump function for the ball} $B(x,r)$, provided $\supp~\omega
\subseteq B(x,r)$, $\|\omega\|_{\infty} \leq 1$ and $\max_{X \in {\mathcal P}} \|X\omega\|_{\infty}
\leq 1/r$.  If  $\omega$ is a bump function for some ball $B(x,r)$, 
we say that $\omega$ is an {\em $r$-bump function}. \\

We can now establish the following $T(1)$ theorem for ${\bf M}$:

\begin{theorem}
\label{t1m}
Let ${\bf M}, d, \mu$ be as above.\\
(a)  Say $r_0 > 0$. Then there exists $C_0 > 0$ as follows.
Suppose $T: C^1({\bf M}) \to L^2({\bf M})$ is linear, and has a formal adjoint 
$T^*: C^1({\bf M}) \to L^2({\bf M})$.   Suppose:
\begin{itemize}
\item[$(i)$] $\|T \omega\|_2 \leq A r^{n/2}$ and $\|T^* \omega\|_2 \leq A r^{n/2}$ 
for all $r$-bump functions $\omega$, if $0 < r \leq r_0$;
\item[$(ii)$] There is a kernel $K(x,y)$, $C^1$ off the diagonal, 
such that if $F \in C^{1}({\bf M})$, 
then for $x$ outside the support of $F$, $(TF)(x) = \int K(x,y)F(y)dy$; and 
\item[$(iii)$] Whenever $X$ $($acting in the $x$ variable$)$ and $Y$ (acting in the 
$y$ variable) are in ${\cal P}_0$, and at least one of $X$ and $Y$ is the identity map, we have
\[ |XYK(x,y)| \leq A\;d(x,y)^{-(n+\deg X + \deg Y)} \]
for all $x, y \in M$ with $x\neq y$ and $d(x,y) \leq 4r_0$;
\item[$(iv)$] Whenever $d(x,y) > 4r_0$, $|K(x,y)| \leq A$;
\end{itemize}
then $T$ extends to a bounded operator on $L^2({\bf M})$, and $\|T\| \leq C_0 A$.\\
(b) In (a), we can replace the condition (i) by 
\begin{itemize}
\item[$(i)'$] $\|T \omega\|_{\infty} \leq A$ and $\|T^* \omega\|_{\infty} \leq A$ 
for all $r$-bump functions $\omega$, if $0 < r \leq r_0$,
\end{itemize}
and the result still follows.
\end{theorem}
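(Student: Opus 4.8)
The plan is to localize and reduce the assertion to the Euclidean (local) $T(1)$ theorem, Corollary \ref{t1rnloc}, by means of a finite smooth partition of unity into pieces small enough to lie inside a single coordinate chart. First one disposes of (b): exactly as in the proof of part (b) of Corollary \ref{t1rnloc}, hypothesis $(i)'$ together with the kernel bounds $(iii)$, $(iv)$ forces $(i)$ with a larger constant still $\le C_{\bf M}A$. Indeed, $(iii)$ with $X=Y=\mathrm{id}$ for $d(x,y)\le 4r_0$, combined with $(iv)$ and the boundedness of $d$ on the compact ${\bf M}$ otherwise, yields a global bound $|K(x,y)|\le C_{\bf M}A\,d(x,y)^{-n}$ for all $x\neq y$; then, for an $r$-bump function $\omega$ with $r\le r_0$, one splits $\|T\omega\|_2^2$ over $B(x_0,2r)$ and its complement, bounds the first piece by $A^2\mu(B(x_0,2r))\le c_4A^2(2r)^n$, and on the complement uses the representation $(ii)$, the global kernel bound, the volume estimate (\ref{ballsn}), and summation over dyadic annuli, to get $\|T\omega\|_2\le C_{\bf M}A\,r^{n/2}$. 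So it suffices to prove (a).

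For (a), fix $r_0$ and choose $\rho>0$ small, depending only on ${\bf M}$, $d$, $\mu$, $\delta$, $r_0$ and the comparison constants $c_1,c_2$ of Proposition \ref{ujvj}, with the following properties: every subset of ${\bf M}$ of diameter $\le 6\rho$ lies in some $B(x,3\delta)$, hence in one of the charts $U_i$, and moreover in a set $B(x,\delta)$ on which $c_1|y-z|\le d(y,z)\le c_2|y-z|$ in that chart's coordinates; and $2c_2\cdot(10\rho/c_1)\le 4r_0$ (so in particular $c_2\cdot(10\rho/c_1)\le r_0$). Cover ${\bf M}$ by finitely many balls $B(z_\alpha,\rho)$, $1\le\alpha\le A_0$, and fix a smooth partition of unity $\{\eta_\alpha\}$ with $\supp\eta_\alpha\subseteq B(z_\alpha,\rho)$, $\|\eta_\alpha\|_\infty\le 1$, $\max_{X\in{\cal P}}\|X\eta_\alpha\|_\infty\le C_\eta/\rho$. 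For $f\in C^1({\bf M})$ one then has $Tf=\sum_{\alpha,\beta}\eta_\beta\,T(\eta_\alpha f)$ (a finite sum), so, since $A_0$ depends only on ${\bf M}$ and $\rho$, it is enough to prove $\|\eta_\beta T\eta_\alpha\|_{L^2\to L^2}\le CA$ for every pair $(\alpha,\beta)$, with $C$ independent of $A$; note $\eta_\beta T\eta_\alpha$ has formal adjoint $\eta_\alpha T^*\eta_\beta$.

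Suppose first $d(z_\alpha,z_\beta)\le 4\rho$ (a \emph{close} pair). Then $\supp\eta_\alpha\cup\supp\eta_\beta$ has diameter $\le 6\rho$, so it lies in a single chart $U_i$ and in a region of two-sided metric comparability. Transporting $\eta_\beta T\eta_\alpha$ through $\phi_i$ produces an operator from $C^1_c(B_1)$ to $L^2(B_2)$, with $B_1=B_2$ a Euclidean ball of radius $r_1\le 10\rho/c_1$ containing the images of the two supports; the $L^2$ pairings differ from those on ${\bf M}$ by the uniformly bounded Jacobian of $\phi_i$, and each Euclidean $\partial_j$ is a bounded-coefficient combination of vector fields in ${\cal P}$. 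The four hypotheses of Corollary \ref{t1rnloc} then hold for this transported operator, with constant $\lesssim A$: the support/cancellation condition $(a)$ because $\eta_\alpha,\eta_\beta$ cut off onto $\phi_i(\supp\eta_\alpha)$ and $\phi_i(\supp\eta_\beta)$; the kernel-representation condition $(ii)$ with Euclidean kernel the pushforward of $\eta_\beta(x)K(x,y)\eta_\alpha(y)$, by hypothesis $(ii)$ of the theorem; the Calder\'on--Zygmund bound $(iii)$ from hypothesis $(iii)$ of the theorem, since for $x\in B_1,y\in B_2$ one has $d(x,y)\le c_2|x-y|\le 2c_2r_1\le 4r_0$ (so $(iii)$ applies), $d(x,y)^{-1}\le c_1^{-1}|x-y|^{-1}$, and the restriction ``one of the two derivatives is the identity'' is preserved under passage to ${\cal P}$-combinations; and the bump-function bound $(i)$ from hypothesis $(i)$ of the theorem, because an $r$-bump function on $B_1$ with $r\le r_1$, multiplied by $\eta_\alpha$ and pulled back to ${\bf M}$, is a fixed multiple of a $(c_2r)$-bump function, with $c_2r\le c_2r_1\le r_0$, and $(i)$ for $T^*$ likewise handles $\eta_\alpha T^*\eta_\beta$. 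Corollary \ref{t1rnloc} then gives $\|\eta_\beta T\eta_\alpha\|_{L^2\to L^2}\lesssim A$. Suppose instead $d(z_\alpha,z_\beta)>4\rho$ (a \emph{far} pair). Then $\supp\eta_\alpha$ and $\supp\eta_\beta$ are separated by $\ge 2\rho$, so for $f\in C^1({\bf M})$ and $x\in\supp\eta_\beta$ we have $x\notin\supp(\eta_\alpha f)$ and hence, by $(ii)$, $(\eta_\beta T\eta_\alpha f)(x)=\int\eta_\beta(x)K(x,y)\eta_\alpha(y)f(y)\,dy$; thus $\eta_\beta T\eta_\alpha$ is integration against the kernel $\eta_\beta(x)K(x,y)\eta_\alpha(y)$, which is supported in $B(z_\beta,\rho)\times B(z_\alpha,\rho)$, where $d(x,y)\ge 2\rho$ and so, by the global kernel bound, $|\eta_\beta(x)K(x,y)\eta_\alpha(y)|\le C_{\bf M}A(2\rho)^{-n}$. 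This operator is Hilbert--Schmidt, with norm $\le C_{\bf M}A(2\rho)^{-n}c_4\rho^n=CA$.

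Summing the $A_0^2$ bounds gives $\|Tf\|_2\le C_0A\|f\|_2$ for all $f\in C^1({\bf M})$, and $T$ extends to $L^2({\bf M})$ by density, with $C_0$ depending only on ${\bf M},d,\mu$ and $r_0$, as required. I expect the only delicate point to be the bookkeeping in the close-pairs case: one must choose $\rho$ small enough, in terms of $\delta$, $r_0$ and the comparison constants $c_1,c_2$, so that simultaneously the relevant sets genuinely fit inside one chart $U_i$ and inside a region $B(x,\delta)$ of two-sided metric comparability, the Calder\'on--Zygmund hypothesis $(iii)$ of the theorem is available throughout $B_1\times B_2$ (i.e.\ $d(x,y)\le 4r_0$ there), and Euclidean bump functions at the chart scale $r_1$ pull back to manifold bump functions at a scale $\le r_0$, so that hypothesis $(i)$ of the theorem can actually be invoked; once $\rho$ is pinned down, everything else is routine transport of estimates.
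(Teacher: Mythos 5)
Your proof is correct and follows essentially the same route as the paper: localize via a finite partition of unity at a fine enough scale, dispose of well-separated pairs by a direct kernel/Hilbert--Schmidt estimate, and transport the remaining close-pair pieces through a coordinate chart to reduce to Corollary \ref{t1rnloc}. The only cosmetic differences are that you carry an explicit scale $\rho$ rather than assuming $4r_0<\delta$ as the paper does, and you reduce $(b)$ to $(a)$ at the manifold level rather than invoking Corollary \ref{t1rnloc}$(b)$ directly; both are equivalent in substance, and your final paragraph correctly identifies the one point of bookkeeping that needs care.
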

{\bf Proof} Without loss of generality, we may assume $4r_0 < \delta$.  Cover ${\bf M}$
by finitely many open balls of radius $r_0$, and select a partition of unity
$\{\zeta_i\}$ subordinate to this covering.  Write
\begin{equation}
\label{tpar}
T = \sum_{i,j} \zeta_i T \zeta_j.
\end{equation}
We prove that each term in the summation in (\ref{tpar}) is bounded on 
$L^2(M)$.  \\

Note first that if supp$\zeta_i \cap$ supp$\zeta_j$ is empty, then those
supports are separated by a positive distance.  In that case,  $\zeta_iT\zeta_j$ is the
operator with the $C^1$ kernel $\zeta_i(x)K(x,y)\zeta_j(y)$.  Thus, in either $(a)$ or $(b)$,
$\zeta_iT\zeta_j$ is bounded on $L^2$, with bound less than or equal to $CA$ for some $C$.\\

Thus we need only need to consider those $i,j$ for which 
supp$\zeta_i$ and supp$\zeta_j$ intersect.  
For some $x_0 \in {\bf M}$,
supp$\zeta_i\cup$supp$\zeta_j \subseteq B(x_0,3r_0)$.  Choose $U_k$ such that
$B(x_0,3\delta) \subseteq U_k$.
Let $B$ be the Euclidean ball $\phi_k(U_k)$; then $\zeta_i T \zeta_j$ may be pulled
over through the diffeomorphism $\phi_k$, to an operator, say ${\cal T}: C_c^1(B) \to L^2(B)$.
Let $K_1$ be the support of $\zeta_j \circ \phi_k^{-1}$, and let $K_2$ be the 
support of $\zeta_i \circ \phi_k^{-1}$.  It is enough to show that
${\cal T}$ satisfies the hypotheses of Corollary \ref{t1rnloc} (with $B=B_1=B_2$); for then we will know that
${\cal T}$ has a bounded extension to $L^2(B)$, and hence that
$\zeta_i T \zeta_j$ has a bounded extension to $L^2({\bf M})$.  Note that hypothesis
$(a)$ of Corollary \ref{t1rnloc} surely holds for $\mathcal T$.\\

Say $B$ has radius 
$s$.  For hypothesis $(i)$ (or in the situation of $(b)$, hypothesis $(i)'$)
of Corollary \ref{t1rnloc}, say $\omega$ is a Euclidean
$r$-bump function supported in $B$, for some $r \leq s$.  We may pull $\omega$ back from $B$ to a function,
$\tilde{\omega} = \omega \circ \phi_k$ on $U_k$.
There, by (\ref{rhoeuccmp2}), for some fixed $c > 0$, $\tilde{\omega}$ is
 a fixed constant  times a  $cr$-bump function on ${\bf M}$, so $\zeta_j \tilde\omega$ is a fixed constant  times a
$cr$-bump function on ${\bf M}$.
We use this if $cr < r_0$.  If instead $cr \geq r_0$, we note that 
$\zeta_j$ is a fixed constant times an $r_0$-bump function on ${\bf M}$, 
while if $X \in {\cal P}$ then $\|X\tilde{\omega}\|_{\infty} \leq C$ independent of $r$, so 
$\zeta_j \tilde{\omega}$ is a fixed constant times an $r_0$-bump function.  From these
observations, we see that, in $(a)$, for some $A' > 0$,
$\|{\cal T}\omega\|_2 \leq A'r^{n/2}$ for all $\omega$ as above.  Moreover in $(b)$
we see that, for some $A' > 0$, $\|{\cal T}\omega\|_{\infty} \leq A'$ for all $\omega$ as above.\\

Similarly we may pull $\zeta_j T^* \zeta_i$ back over to an operator
${\cal R}: C_c^1(B) \to L^2(B)$, and obtain similar conclusions for ${\cal R}$.
To be sure, because of the different metrics, ${\cal R}$ might not equal
${\cal T}^*$.  However, there is a smooth, nowhere vanishing function $h$ on 
$\phi_k(V_k)$, such that whenever $f \in C_c(\phi_k(V_k))$, and if $\tilde{f} = 
f \circ \phi_k \in C_c(V_k)$, then $\int_{\bf M} \tilde{f} d\mu = \int (fh)(x)dx$.
Then a brief computation shows that ${\cal T}$ has formal adjoint
${\cal T}^* = h {\cal R}h^{-1}$.  From this it is now evident that, in $(a)$, ${\cal T}^*$
satisfies hypothesis $(i)$ of Corollary \ref{t1rnloc}, while in $(b)$, it satisfies
hypothesis $(i)'$ of Corollary \ref{t1rnloc}.  

As for hypotheses $(ii)$ and $(iii)$ of Corollary \ref{t1rnloc}, pull over the kernel
of $\zeta_iT\zeta_j$ to $B$,
obtaining the kernel
\[ J(x,y) = \zeta_i \circ \phi_k^{-1}(x) K(\phi_k^{-1}(x),\phi_k^{-1}(y))\zeta_j \circ \phi_k^{-1}(y).\]
The kernel of ${\cal T}$ is evidently $J(x,y)h(y)$.  If $J(x,y) \neq 0$, then because of
the conditions on the supports of $\zeta_i$ and $\zeta_j$, we must have that 
$\phi_k^{-1}(x), \phi_k^{-1}(y) \in B(x_0, 3r_0)$, whose closure is contained in $B(x_0,\delta)$.  We
see then, by (\ref{rhoeuccmp}), that hypotheses $(ii)$ and $(iii)$ of Corollary \ref{t1rnloc}
are satisfied for ${\cal T}$, so the proof is complete.\\

We will have a few other facts that applies to general ${\bf M}$.  At the start of the proof of Theorem
2.3 $(b)$ of \cite{gmfr}, we established $(a)$ of the following proposition.

\begin{proposition}
\label{nxt}
Set
\[ {\mathcal N}_{x,t} = \left\{\Phi \in C^{\infty}({\bf M}):  \;
t^{n} \left|\left(\frac{d(x,y)}{t}\right)^N \Phi(y)\right|\leq 1 
\mbox{ whenever } y \in {\bf M}, \: 0 \leq N \leq n+2\right\}. \]
Say $a > 1$. \\ 
(a)Then there exists $C > 0$ as follows:\\
For each $j \in \ZZ$, write ${\bf M}$ as a finite disjoint union of 
measurable subsets $\{E_{j,k}: 1 \leq k \leq N_j\}$, each of diameter less than $a^j$.
For each $j,k$, select any $x_{j,k} \in E_{j,k}$,\\
Suppose $\{  \Phi_{j,k}\}$ and $\{  \Psi_{j,k}\}$ are two systems of functions such that 
$ \Phi_{j,k},  \Psi_{j,k} \in \mathcal{N}_{x_{j,k}, a^j}$ for all $j,k$, and
suppose $0 \leq J \leq 1$. Then 
\begin{equation}
\label{minnJ}
K_J(x,y):=\sum_{j}\sum_k  a^{-jJ}\mu(E_{j,k})  \left| \Phi_{j,k}(x) \Psi_{j,k}(y)\right|
\leq  C_3d(x,y)^{-n-J}
\end{equation}
for any  $x, y\in \bf{M}$, $x\neq y$.\\
(b) For any $\eta > 0$, the series in (\ref{minnJ}) converges uniformly for $d(x,y) \geq \eta$.
\end{proposition}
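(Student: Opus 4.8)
Part (a) is nothing new: it is precisely the estimate established at the start of the proof of Theorem 2.3(b) of \cite{gmfr}, so the plan is to prove only (b). Write $\Sigma_j(x,y)=\sum_{k=1}^{N_j}a^{-jJ}\mu(E_{j,k})\,|\Phi_{j,k}(x)|\,|\Psi_{j,k}(y)|$, which for each fixed $j$ is a finite sum of products of continuous functions, hence continuous, so that $K_J=\sum_j\Sigma_j$. I would deduce uniform convergence on the compact set $\{(x,y):d(x,y)\ge\eta\}$ from the partial sums being uniformly Cauchy there, i.e.\ I would bound the two tails $\sum_{j>J_0}\Sigma_j$ and $\sum_{j<-J_0}\Sigma_j$ on that set by quantities tending to $0$ as $J_0\to\infty$.

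The coarse tail is easy: from $\Phi_{j,k},\Psi_{j,k}\in\mathcal N_{x_{j,k},a^j}$ I would use only the crude $N=0$ bound $\|\Phi_{j,k}\|_\infty,\|\Psi_{j,k}\|_\infty\le a^{-jn}$, together with $\sum_k\mu(E_{j,k})=\mu({\bf M})$, to get $\Sigma_j\le\mu({\bf M})a^{-j(2n+J)}$; since $2n+J>0$, $\sum_{j>J_0}\Sigma_j$ is geometrically small, uniformly in $x,y$ (here $d(x,y)\ge\eta$ is not even used).

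For the fine tail — the main point — I would fix $j$ with $a^j<\tfrac14\min(\eta,\delta)$ and split the indices according to where the center $x_{j,k}$ sits: $S_1=\{k:d(x,x_{j,k})<\tfrac14 d(x,y)\}$, $S_2=\{k:d(y,x_{j,k})<\tfrac14 d(x,y)\}$ (disjoint from $S_1$), and $S_3$ the remaining indices. On $S_3$ both $d(x,x_{j,k})$ and $d(y,x_{j,k})$ are $\ge\tfrac14 d(x,y)$, so the $N=n+2$ instance of the defining inequality for $\mathcal N_{x_{j,k},a^j}$ gives $|\Phi_{j,k}(x)|,|\Psi_{j,k}(y)|\le Ca^{2j}d(x,y)^{-(n+2)}$, which with $\sum_k\mu(E_{j,k})\le\mu({\bf M})$ bounds the $S_3$-part of $\Sigma_j$ by $Ca^{j(4-J)}d(x,y)^{-2(n+2)}$. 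On $S_1$ one has $d(y,x_{j,k})\ge\tfrac34 d(x,y)$, hence $|\Psi_{j,k}(y)|\le Ca^{2j}d(x,y)^{-(n+2)}$, and I would control the $x$-factor by the scale-free bound $\sum_{k\in S_1}\mu(E_{j,k})|\Phi_{j,k}(x)|\le C$, proved by grouping the $k\in S_1$ into dyadic shells around $x$ of radii $\sim 2^\nu a^j$: on the $\nu$-th shell $|\Phi_{j,k}(x)|\le Ca^{-jn}2^{-\nu(n+2)}$ by the decay condition, while disjointness of the $E_{j,k}$ and the upper volume estimate $\mu(B(x,r))\le c_4 r^n$ (trivially extended to all relevant $r$) give $\sum_{k\text{ in the }\nu\text{-th shell}}\mu(E_{j,k})\le C(2^\nu a^j)^n$, so the shell contributes $\le C2^{-2\nu}$ and the $\nu$-sum converges. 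Hence the $S_1$- (and, symmetrically, the $S_2$-) part of $\Sigma_j$ is $\le Ca^{j(2-J)}d(x,y)^{-(n+2)}$. Combining the three pieces and using $d(x,y)\ge\eta$ with $a^j$ bounded above, one gets $\Sigma_j\le C\eta^{-(n+2)}a^{j(2-J)}$ for every $j$ in the fine range; since $2-J\ge 1>0$, $\sum_{j<-J_0}\Sigma_j$ is again geometrically small, uniformly on $\{d(x,y)\ge\eta\}$. Together with the coarse estimate this yields the uniform Cauchy property, hence (b).

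The step I expect to be the real obstacle is exactly this fine-scale estimate: bounding $|\Phi_{j,k}(x)|$ by $a^{-jn}$ for all $k$ (as in the coarse case) would leave behind an inverse power of $a^j$ that diverges as $j\to-\infty$. Avoiding it forces one to pull decay out of $\Phi_{j,k}$ \emph{and} $\Psi_{j,k}$ simultaneously, using $d(x,x_{j,k})+d(y,x_{j,k})\ge d(x,y)$, and then to resum the dyadic shells against $\mu(B(x,r))\le c_4 r^n$ — essentially the same mechanism underlying part (a), arranged here so as to produce the positive power $a^{j(2-J)}$ that makes the tail vanish.
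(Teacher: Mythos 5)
Your argument for (b) is correct but takes a different route from the paper. The paper's proof of (b) is a one-line observation: the proof cited for (a) (from the start of the proof of Theorem 2.3(b) of \cite{gmfr}) already establishes that the series converges uniformly on each annulus $a^{j_0} \le d(x,y) < a^{j_0+1}$, and since $\{(x,y): d(x,y)\ge\eta\}$ meets only finitely many such annuli (as $d$ is bounded above by $\mathrm{diam}({\bf M})$), (b) follows. You instead give a self-contained tail estimate: the coarse tail $\sum_{j>J_0}\Sigma_j$ is controlled by the crude $N=0$ bound and $\sum_k\mu(E_{j,k})=\mu({\bf M})$, while for the fine tail you split the indices into $S_1,S_2,S_3$ according to proximity of $x_{j,k}$ to $x$ or $y$, pull decay from the distant factor via the $N=n+2$ bound, and control the near factor by a dyadic-shell count against $\mu(B(x,r))\le c_4 r^n$, yielding the positive power $a^{j(2-J)}$ that kills the tail. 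This is essentially a reconstruction of the mechanism underlying (a), repackaged so as to read off uniform Cauchy-ness directly on $\{d\ge\eta\}$; it is longer than the paper's argument but more explicit and self-contained, and has the side benefit of not requiring one to inspect the details of the cited proof in \cite{gmfr} to extract the annulus-uniformity that the paper relies on.
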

{\bf Proof}  As we said, $(a)$ was established at the start of the proof of Theorem
2.3 $(b)$ of \cite{gmfr}.  That proof showed that the convergence was uniform for
$a^{j_0} \leq d(x,y) < a^{j_0+1}$, for any $j_0 \in \Z$.  But under the hypotheses of
$(b)$, $d(x,y)$ is both bounded below and above (by the diameter of ${\bf M}$), so $(b)$
follows.\\

We will also need the following basic facts from Section 3 of \cite{gmcw}: 
\begin{itemize}
\item 
For any $N > n$ there exists $C_N$ such that, for all $x \in {\bf M}$ and $t > 0$,
\begin{equation}
\label{ptestm}
\int_{\bf M} [1 + d(x,y)/t]^{-N} d\mu(y) \leq C_N t^n
\end{equation}
\item
For all $M, t > 0$, and for all
$E \subseteq {\bf M}$ with diameter less than $Mt$, if 
$x_0 \in E$, then one has that
\begin{equation}
\label{alcmpN}
\frac{1}{M+1}[1+d(x,y)/t] \leq [1+d(x_0,y)/t] \leq (M+1)[1+d(x,y)/t]
\end{equation}
for all $x \in E$ and all $y \in {\bf M}$.\\
\end{itemize}
In fact, (\ref{alcmpN}) is a simple consequence of the triangle inequality for $d$.

\section{The Summation Operator}
We return now to the notation in the introduction.
As in the case $s = 0$ studied in \cite{gmfr}, in order
to prove (\ref{phijkfr}), we need to prove the $L^2$-boundedness of 
operators similar to $F \to \sum_{j,k} \mu_{j,k}\langle F,  \phi_{j,k} \rangle \phi_{j,k}$.
(Indeed the boundedness of this operator would clearly at least 
imply the upper bound in (\ref{phijkfr}).)   This is an example of what
we call a {\em summation operator}.\\

To begin, let ${\bf E} = (1,0,0)$ be the ``East pole'' of $S^2$.
Let $P$ denote the projection in $L^2_s(S^2)$ 
onto the null space of $\Delta_s$ in $C_s^{\infty}(S^2)$, $\mathcal H_{s\mid s\mid}$.  This space,
$PL^2_s$, is the span of $2\mid s\mid +1$ elements 
${}_sY_{lm}$ with $l = |s|$.  We shall need the following fact about $PL^2_s$:
\begin{proposition}
\label{pl2sok}
There exists $C > 0$ as follows.  Let $B = B({\bf E},\pi/5)$.
For each $p \in B$, there exists a ${\cal Y}_p \in PL^2_s$ 
such that:\\
(i) ${\cal Y}_{pI}(p) = 1$; \\
(ii) $\|{\cal Y}_{pI}\|_{\infty} \leq C$; and\\
(iii) $\|{\cal Y}_{pI}\|_{C^1(B)} \leq C$.
\end{proposition}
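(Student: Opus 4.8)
The plan is to exploit the fact that $PL^2_s$ is a \emph{finite-dimensional} space — it has dimension $2|s|+1$ — together with the transitivity of the $SO(3)$-action on $S^2$, so that only a single point really needs to be analyzed. First I would dispose of the base point: I claim there exists a nonzero $g \in PL^2_s$ with $g_I({\bf E}) \neq 0$. Indeed, if every element of $PL^2_s$ vanished at ${\bf E}$ in its $I$-trivialization, then by rotating (using the unitary action $f \mapsto f^R$, which preserves $PL^2_s$ since it commutes with $\Delta_s$) every element would vanish everywhere, forcing $PL^2_s = \{0\}$, contradicting $\dim PL^2_s = 2|s|+1 \geq 1$. (One must be a little careful that ${\bf E} \in U_I$ and that the $I$-trivialization makes sense there, which it does.) Normalizing, set ${\cal Y}_{\bf E} = g/g_I({\bf E})$, so that ${\cal Y}_{{\bf E},I}({\bf E}) = 1$.

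Next I would define ${\cal Y}_p$ for general $p \in B = B({\bf E},\pi/5)$ by transport of structure. For each such $p$ choose a rotation $R_p \in SO(3)$ with $R_p {\bf E} = p$; since $B$ is a small ball one can choose $R_p$ depending smoothly (or at least measurably and boundedly) on $p$, e.g.\ rotation by angle $d({\bf E},p) < \pi/5$ about the axis ${\bf E} \times p$. Then set ${\cal Y}_p = ({\cal Y}_{\bf E})^{R_p^{-1}}$, the rotate of ${\cal Y}_{\bf E}$; this lies in $PL^2_s$ because the $SO(3)$-action preserves that space. Using the defining relation $(f^R)_I(q) = f_R(Rq)$ and the spin relation \eqref{spin-relation}, one computes that $({\cal Y}_p)_I(p)$ equals $e^{is\psi}{\cal Y}_{{\bf E},I}({\bf E}) = e^{is\psi}$ for the relevant transition angle $\psi = \psi_{{\bf E}\, R_p^{-1}\, I}$ evaluated appropriately; to get property (i) exactly (value $1$, not just modulus $1$) I would simply multiply by the unimodular constant $e^{-is\psi}$ — since multiplying a spin function by a constant of modulus $1$ keeps it in $PL^2_s$ and does not affect (ii) or (iii). (Alternatively, one absorbs this phase into the choice of $R_p$, or notes that ${\bf E} \in U_{R_p}$ for $p$ near ${\bf E}$ and works in the $R_p$-trivialization throughout.)

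For the uniform bounds (ii) and (iii): here is where finite-dimensionality does the work. On the fixed open set $U_I$ all norms on the finite-dimensional space $\{f_I : f \in PL^2_s\}$ are equivalent; in particular the $C^1(\overline{B})$-norm is bounded by a constant times the $L^2$-norm, because the finitely many functions $\{{}_sY_{|s|m,I}\}$ spanning it are fixed smooth functions on $\overline{B} \subseteq U_I$. Thus $\|{\cal Y}_{{\bf E},I}\|_{C^1(\overline{B})} \leq C_1$ for an absolute constant $C_1$. Since rotation by $R_p^{-1}$ is an isometry of $S^2$ and $R_p$ ranges over a compact set of rotations each moving ${\bf E}$ by at most $\pi/5$, the functions $({\cal Y}_p)_I$ (in the $I$-trivialization, on $B$) are obtained from ${\cal Y}_{{\bf E}}$ by composition with a bounded family of diffeomorphisms times a unimodular transition factor $e^{is\psi_{\cdot}}$ whose $C^1$-norm on $B$ is also uniformly bounded (the angle function $\psi$ is smooth on the relevant domain, which by the intersection description excludes only the four poles $\{R_p{\bf N}, R_p{\bf S}, {\bf N}, {\bf S}\}$, all at distance bounded away from $B$ when $p$ is within $\pi/5$ of ${\bf E}$). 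Combining, $\|({\cal Y}_p)_I\|_{C^1(B)} \leq C$ uniformly in $p$, which gives (ii) and (iii).

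The main obstacle I anticipate is purely bookkeeping: controlling the transition-function factor $e^{is\psi}$ and verifying that the relevant angle function $\psi$ stays smooth with bounded $C^1$-norm on $B$, uniformly as $p$ varies — i.e.\ checking that the chart domains $U_I$ and $U_{R_p}$ overlap $B$ nicely and that none of the four excluded poles intrude. This is where one uses that the radius $\pi/5$ (and the displacement $d({\bf E},p) < \pi/5$) is small enough that $B$ stays a definite distance from all of ${\bf N}, {\bf S}, R_p{\bf N}, R_p{\bf S}$. Everything else — existence of $g$, the normalization, and the norm equivalence — is routine given finite-dimensionality and transitivity of the action.
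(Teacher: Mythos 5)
Your proposal is correct and follows essentially the same path as the paper's own proof: the same rotation-invariance argument to produce a ${\cal Y}$ with ${\cal Y}_I({\bf E})=1$, the same transport ${\cal Y}_p = e^{is\psi}{\cal Y}^{R_p^{-1}}$ with a unimodular phase to normalize at $p$, and the same smoothness/compactness considerations (keeping $R_p^{-1}x$ in a compact subset of $U_I$ and noting the transition angle depends smoothly on $(x,p)$) for the uniform $C^1$ bound. The only cosmetic differences are that the paper writes $R_p$ out explicitly as $S^{\phi_0}_{\theta_0-\pi/2}T_{\phi_0}$ and absorbs the phase into the definition of ${\cal Y}_p$ from the start, whereas you introduce it as an after-the-fact correction.
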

{\bf Proof}  First we claim that for some ${\cal Y} \in PL^2_s$, ${\cal Y}_{I}({\bf E}) = 1$.
If not, then for every $Y \in PL^2_s$, $Y({\bf E}) = 0$ since $Y_R(E)= c Y_I(E)$ for all $R\in SO(3)$.  If $p \in S^2$ is 
arbitrary, then we may select $R \in SO(3)$ with $Rp = {\bf E}$, and then for all
$Y \in PL^2_s$, $(Y^R)_I(p) = Y_R(Rp) = 0$. This implies that $Y^R(p)=0$ for any $Y\in PL^2_s$.   However, the map $Y \to Y^R$ defines an 
action of $SO(3)$ on $PL^2_s$, so the ${}_sY_{lm}^R$, with $l = |s|$, are a basis of
$PL^2_s$.  Thus $Y(p) = 0$ for all $Y \in PL^2_s$ and all $p \in S^2$, contradiction.
So such a ${\cal Y}$ exists. 

Now let us use standard spherical coordinates $(\theta,\phi)$ on the sphere, so that ${\bf E}$ has 
coordinates $(\pi/2,0)$. 
For $|\alpha|,|\beta|,|\phi_0| < \pi/2$, 
consider the unique rotations $S^{\phi_0}_{\alpha}$, $T_{\beta}$ with the properties
\[ T_{\beta}(\theta,\phi) = (\theta,\phi+\beta), \]
\[ S^{\phi_0}_{\alpha}(0,\phi_0) = (\alpha,\phi_0), \mbox{ and } 
S^{\phi_0}_{\alpha} \mbox{ fixes the plane } \phi = \phi_0. \]

If $p = (\theta_0,\phi_0) \in B({\bf E},\pi/4)$, say, we may write $p = R_p{\bf E}$, where
$R_p = S^{\phi_0}_{\theta_0-\pi/2}T_{\phi_0}$.  \\

We claim that $(i)$, $(ii)$ and $(iii)$ are satisfied if we set
\[ {\cal Y}_p = e^{is\psi} {\cal Y}^{R_p^{-1}}, \mbox{ where } \psi = \psi_{pR_pI}. \]
Indeed, $(ii)$ is clear, since ${\cal Y}$ is smooth, and hence bounded.
We have $(i)$ since
\[ {\cal Y}_{pI}(p) = e^{is\psi} ({\cal Y}^{R_p^{-1}})_I(p) = 
({\cal Y}^{R_p^{-1}})_{R_p}(R_p {\bf E}) 
= (({\cal Y}^{R_p^{-1}})^{R_p})_I({\bf E})  = {\cal Y}_{I}({\bf E}) = 1. \]

As for $(iii)$, we note that for any $x \in B$, 
\begin{align}\notag
{\cal Y}_{pI}(x) = e^{is\psi} ({\cal Y}^{R_p^{-1}})_I(x) 
&= e^{is(\psi+\gamma)} ({\cal Y}^{R_p^{-1}})_{R_p}(R_p R_p^{-1}x) \\\notag
&= 
e^{is(\psi+\gamma)}(({\cal Y}^{R_p^{-1}})^{R_p})_I(R_p^{-1}x) \\\notag
&= e^{is(\psi+\gamma)}{\cal Y}_I(R_p^{-1}x) 
\end{align}
where now $\gamma = \psi_{xIR_p}$.  Note that 
$$d(R_p^{-1}x,{\bf E}) = d(x,R_p{\bf E}) = d(x,p) < 2\pi/5,$$
 so 
$R_p^{-1}x \in B({\bf E}, 2\pi/5)$, whose closure is contained in $U_I$.  Also, it is 
evident that $\gamma$ depends smoothly
on both $x$ and $p$.  $(iii)$ now follows at once.
This completes the proof.\\

Next, let
\[ C_{s,0}^{\infty}(S^2) = \left\{\varphi \in C_s^{\infty}(S^2):\; P\varphi = 0\right\}. \]
Cover $S^2$ with finitely many open geodesic balls $\{B(y_m,\pi/10)\}_{m=1}^{M}$, and choose
a partition of unity ${\zeta_m}$ subordinate to that covering.  For each $m$, select
$R_m \in SO(3)$ with $y_m = R_m{\bf E}$.  Since $B({\bf E},\pi/2) \subseteq U_I$,
surely $B(y_m,\pi/2) \subseteq U_{R_m}$.  

As in the previous section, we fix a finite set
${\mathcal P}$ of real $C^{\infty}$ vector fields on $S^2$, whose
elements span the tangent space at each point.  We let
${\mathcal P}_0 = {\mathcal P} \cup \{\mbox{ the identity map}\}$.
We note the following simple fact about ${\mathcal P}$:
\begin{proposition}
\label{rotbdd}
Suppose $0 < \delta < \eta < \pi/2$, $p \in S^2$, and let $Z$ be any smooth vector field
on $B(p,\eta)$.  Then there exists $C > 0$ as follows: for any $C^1$ function $\omega$
on $B(p,\eta)$, and for every $R \in SO(3)$, 
\begin{equation}
\label{rotbddway}
\max_{x \in \overline{B(R^{-1}p,\delta)}}|Z(\omega \circ R)(x)| \leq 
C\max_{X \in {\cal P}}\max_{y \in \overline{B(p,\delta)}}|X\omega(y)|.
\end{equation}
\end{proposition}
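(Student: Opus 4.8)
The plan is to reduce the statement to a compactness argument in the rotation group $SO(3)$, exploiting the fact that the left-hand side of (\ref{rotbddway}) depends continuously on $R$ and that $SO(3)$ is compact. First I would observe that both sides of (\ref{rotbddway}) are homogeneous of degree one in $\omega$ (they scale the same way under $\omega \mapsto \lambda\omega$), so by normalizing it suffices to bound $\max_{x}|Z(\omega\circ R)(x)|$ under the assumption that $\max_{X\in{\cal P}}\max_{y\in\overline{B(p,\delta)}}|X\omega(y)| \leq 1$ together with a normalization on $\|\omega\|_\infty$ on $\overline{B(p,\delta)}$ (or one can simply work directly with the quotient, noting that if the right-hand side vanishes then $\omega$ is constant on the connected set $B(p,\delta)$ and the left-hand side vanishes too, since $Z$ annihilates constants). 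The key point is that $Z(\omega\circ R)(x) = \sum_i (Z^i \circ \text{(chart data)}) \cdot (\partial_i \omega)(Rx)$ after writing everything in a fixed coordinate chart covering $\overline{B(p,\eta)}$, and $Rx$ ranges over $\overline{B(p,\delta)}$ when $x \in \overline{B(R^{-1}p,\delta)}$, since $d(Rx,p) = d(x,R^{-1}p) \leq \delta$.

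Next I would make the chain-rule computation explicit. Fix the geodesic ball $B(p,\eta)$; since $\eta < \pi/2$ and $p \in S^2$, this ball is contained in a single coordinate chart, and in that chart $Z = \sum_{i} z^i(x)\,\partial_i$ with $z^i$ smooth on $\overline{B(p,\eta)}$, hence bounded there by some constant depending only on $Z$. For any $R \in SO(3)$, the pushforward $R_*$ acts on the coordinate vector fields by the (smooth, uniformly bounded in $R$) Jacobian of $R$ expressed in the chart; concretely, $\partial_i(\omega \circ R)(x) = \sum_\ell J_{i\ell}(R,x)\,(\partial_\ell\omega)(Rx)$ where $J_{i\ell}(R,x)$ depends smoothly on $(R,x)$ and hence is bounded uniformly over the compact set $SO(3)\times\overline{B(R^{-1}p,\delta)}$ — but one must be slightly careful here, since the chart domain on the source side $\overline{B(R^{-1}p,\delta)}$ moves with $R$. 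The clean way around this is to cover $S^2$ by finitely many charts, note that $\overline{B(R^{-1}p,\delta)}$ has diameter at most $2\delta < \pi$ so it meets only charts on which the change-of-variables data for $R$ is controlled, and take the maximum of the finitely many resulting constants. Then $|Z(\omega\circ R)(x)| \leq C' \max_\ell |(\partial_\ell\omega)(Rx)|$, and since the coordinate partials $\partial_\ell$ are themselves expressible as bounded combinations of the vector fields in ${\cal P}$ on $\overline{B(p,\delta)}$ (because ${\cal P}$ spans the tangent space at every point and $\overline{B(p,\delta)}$ is compact), we get $\max_\ell|(\partial_\ell\omega)(Rx)| \leq C'' \max_{X\in{\cal P}}\max_{y\in\overline{B(p,\delta)}}|X\omega(y)|$, which is exactly the desired bound with $C = C'C''$.

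The main obstacle I anticipate is the bookkeeping around the moving source chart: the vector field $Z$ lives on the fixed ball $B(p,\eta)$, but $\omega\circ R$ must be differentiated at points near $R^{-1}p$, which varies as $R$ varies over all of $SO(3)$. One must therefore either argue coordinate-free (expressing $Z(\omega\circ R) = (R^{-1})_*Z$ applied to $\omega$, evaluated at $Rx$, and bounding $(R^{-1})_*Z$ as a smooth vector field with uniformly bounded coefficients relative to a fixed finite atlas) or, equivalently, patch together finitely many chart estimates and take a maximum. Either way the estimate is uniform in $R$ purely because $SO(3)$ is compact and all the data — the action map $SO(3)\times S^2 \to S^2$, the vector field $Z$, the frame ${\cal P}$ — are smooth; no further input is needed. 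This completes the proof.
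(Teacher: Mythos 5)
Your proposal is correct and takes essentially the same route as the paper: push $Z$ forward by $R$ to obtain a vector field near $p$, expand it in terms of a spanning set of vector fields, and use compactness of $SO(3)\times\overline{B(p,\delta)}$ for uniformity. The paper is slightly more streamlined --- it defines $Z_R g = [Z(g\circ R)]\circ R^{-1}$ and writes $Z_R(q)=\sum_k h_k(R,q)X_k(q)$ directly in terms of two elements of ${\cal P}$ spanning the tangent space on $\overline{B(p,\delta)}$, whereas you pass through coordinate partials $\partial_\ell$ as an intermediate --- but the substance is identical.
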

{\bf Proof}  We may assume that there are $2$ vector fields $X_1, X_2 \in {\cal P}$,
which are a basis for the tangent space at each point of $\overline{B(p,\delta)}$.  (Otherwise,
we cover $\overline{B(p,\delta)}$ by finitely many closed balls, all contained in $B(p,\eta)$, on 
which this is true, and work on these balls instead of on $\overline{B(p,\delta)}$.)   

The left side of (\ref{rotbddway}) equals $\max_{y \in \overline{B(p,\delta)}}|Z_R\omega(y)|$,
where $Z_R$ is the smooth vector field given by $Z_R g = [Z(g \circ R)] \circ R^{-1}$.  
By working in local coordinates, we see that
we may uniquely write $Z_R(q) = \sum_{k=1}^2 h_k(R,q) X_k(q)$,
for certain continuous functions $h_k$ of $R \in SO(3)$ and $q \in \overline{B(p,\delta)}$.  
From this expression, the proposition is evident.\\

As in the previous section, we can use ${\cal P}_0$ to define $r$-bump functions on $S^2$.
It is evident from Proposition \ref{rotbdd} that for any $\eta < \pi/2$, there is a $C > 0$ such that
whenever $0 < r < \eta$, whenever $R \in SO(3)$, and whenever $\omega$ is an $r$-bump function
supported on Ball $B(p,r)$, then $\omega \circ R/C$ is also an $r$-bump function supported on ball $B(R^{-1}p, \delta)$ for some
$\delta<r$.\\

For any $x \in S^2$, we define
\begin{equation}
\label{msxdef}
\begin{split}
%\left{
{\mathcal M}_{s,x,t} = \left\{\varphi \in C_{s,0}^{\infty}(S^2):  \;
t^{2+\deg Y} \left|\left(\frac{d(x,y)}{t}\right)^N Y\varphi_{R_m}(y)\right|\leq 1, \right. \\ 
%\right} \\
%\left{
\left. \:\forall\: 1 \leq m \leq M,\: y \in \overline{B(y_m,\pi/5)},  
\: 0 \leq N \leq 4 \mbox{ and } Y \in {\mathcal P}_0\right\}.
%\right}
\end{split}
\end{equation}

For example, if $\phi_{j,k}:=  W_{x_{j,k},a^j,R_{j,k}}$ is as in (\ref{phijkfr}),
then by Theorem \ref{locestls}, there is a $C > 0$, such that $\phi_{j,k}/C
\in {\mathcal M}_{s,x_{j,k},a^j}$ for all $j,k$.

Note also that if $\varphi \in {\mathcal M}_{s,x,t}$, then $|\varphi| \in {\mathcal N}_{x,t}$. (as defined
in Proposition \ref{nxt}).
This is evident, once one takes $Y = id$ in the definition of ${\mathcal M}_{s,x,t}$ and
recalls that the $\overline{B(y_m,\pi/5)}$ cover $S^2$.

Note also the following fact:
\begin{itemize}
\item
For every $C_1 > 0$ there exists $C_2 > 0$ such that whenever 
$t > 0$ and $d(x_1,x_2) \leq C_1t$,
we have 
\begin{equation}
\label{nrbyok}
{\mathcal M}_{s,x_1,t} \subseteq C_2{\mathcal M}_{s,x_2,t}.
\end{equation}
\end{itemize}
We can now prove the main facts about the summation operators.  Analogously
to Theorem 2.3 of \cite{gmcw}, we have:
  
\begin{theorem}
\label{sumopthm}
Fix $a>1$.  Then there exists $C_2 > 0$ as follows.

For each $j \in \ZZ$, write $S^2$ as a finite disjoint union of 
measurable subsets $\{E_{j,k}: 1 \leq k \leq N_j\}$, each of diameter less than $a^j$.
(One could consider certain
disjoint subsets of the sphere bounded by longitude and latitude
lines.)
For each $j,k$, select any $x_{j,k} \in E_{j,k}$, and select $\varphi_{j,k}$,
 $\psi_{j,k}$ with \\$\varphi_{j,k} , \psi_{j,k}\in \mathcal{M}_{s,x_{j,k}, a^j}$.
For $F \in C^1_s(S^2)$, we claim that we may define 
\begin{equation}
\label{sumopdf2}
SF = S_{\{\varphi_{j,k}\},\{\psi_{j,k}\}}F = 
\sum_{j}\sum_k \mu(E_{j,k}) \langle  F, \varphi_{j,k}  \rangle  \psi_{j,k}. 
\end{equation}
$($Here, and in similar equations below, the sum in $k$ runs from $k = 1$ to $k = N_j$.$)$
Indeed: 
\begin{itemize}
\item[$(a)$] For any $F \in C^1_s(S^2)$, the series defining $SF$ converges absolutely, uniformly on $\bf{M}$,
\item[$(b)$] $\parallel SF\parallel_2\leq C_2\parallel F\parallel_2$ for all
 $F\in C^1_s(S^2)$.\\
Consequently, $S$ extends to be a bounded operator on $L^2_s(S^2)$, with norm less than
or equal to $C_2$. 
\item[$(c)$] If $F \in L^2_s({S^2})$, then
\begin{equation}\notag%\label{uncndl2}
SF = \sum_{j}\sum_k \mu(E_{j,k}) \langle  F, \varphi_{j,k}  \rangle  \psi_{j,k}
\end{equation}
where the series converges unconditionally. 
\item[$(d)$] If $F,G \in L^2_s({S^2})$, then 
\begin{equation}\notag%\label{eq:wk}
 \langle  SF,G  \rangle = 
\sum_{j}\sum_{k}\mu(E_{j,k}) \langle  F, \varphi_{j,k} \rangle   \langle \psi_{j,k},G \rangle ,
\end{equation}
where the series converges absolutely.\\
\end{itemize}
\end{theorem}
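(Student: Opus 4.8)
The plan is to verify the hypotheses of the $T(1)$ theorem, Theorem \ref{t1m}, for $S$; this yields $(b)$, and $(a)$, $(c)$, $(d)$ then follow by softer arguments. Since $\langle F,\varphi_{j,k}\rangle=\int F_R\,\overline{\varphi_{j,k,R}}$ is independent of $R$, the operator $S$ of \eqref{sumopdf2} has ``kernel''
\[ L_{R',R}(x,y)=\sum_{j,k}\mu(E_{j,k})\,\psi_{j,k,R'}(x)\,\overline{\varphi_{j,k,R}(y)}. \]
Because $|\varphi_{j,k}|$ and $|\psi_{j,k}|$ lie in $\mathcal N_{x_{j,k},a^j}$, Proposition \ref{nxt} shows this series converges absolutely and uniformly off the diagonal (with the same for its termwise derivatives), and that $|L_{R',R}(x,y)|\le C\,d(x,y)^{-2}$; this gives hypotheses $(ii)$ and $(iv)$ of Theorem \ref{t1m}.

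For $(b)$ we localize: with $\{\zeta_m\}$ a partition of unity subordinate to $\{B(y_m,\pi/10)\}$, write $S=\sum_{m,m'}\zeta_{m'}S\zeta_m$ on spin functions, and (using $B(y_m,\pi/2)\subseteq U_{R_m}$) trivialize $\zeta_{m'}S\zeta_m$ by $R_m$ on the source and $R_{m'}$ on the target to a \emph{scalar} operator $\mathcal T_{m,m'}$ on $S^2$ with kernel $\zeta_{m'}(x)L_{R_{m'},R_m}(x,y)\zeta_m(y)$; since trivialization by a rotation is an $L^2$-isometry, it suffices to bound each $\mathcal T_{m,m'}$ on $L^2(S^2)$ via Theorem \ref{t1m}. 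Hypotheses $(ii)$ and $(iv)$ follow immediately from the previous paragraph. For $(iii)$, differentiate the kernel and use Leibniz: a term in which a derivative hits a $\zeta$ is dominated by $C\,d(x,y)^{-2}\le C\,d(x,y)^{-3}$ when $d(x,y)\le 4r_0$ (we may take $r_0<\tfrac14$), while $\zeta_{m'}(x)(X_x L)(x,y)\zeta_m(y)$, for $X\in\mathcal P$, equals $\sum_{j,k}a^{-j}\mu(E_{j,k})\,\Phi_{j,k}(x)\,\overline{\Psi_{j,k}(y)}$ with $\Phi_{j,k}=a^j\zeta_{m'}\cdot(X\psi_{j,k,R_{m'}})$ and $\Psi_{j,k}=\zeta_m\varphi_{j,k,R_m}$ (extended by zero) both lying in $\mathcal N_{x_{j,k},a^j}$ — which is immediate from $\varphi_{j,k},\psi_{j,k}\in\mathcal M_{s,x_{j,k},a^j}$ — so Proposition \ref{nxt} with $J=1$ bounds it by $C\,d(x,y)^{-3}$; the case of a $y$-derivative is symmetric. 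Thus each $\mathcal T_{m,m'}$ meets $(ii)$, $(iii)$, $(iv)$.

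The crux is hypothesis $(i)'$, the bump-function bound. Given an $r$-bump $\omega$ ($r\le r_0$) supported in $B(x_0,r)$, we have $\mathcal T_{m,m'}\omega(x)=\zeta_{m'}(x)\sum_{j,k}\mu(E_{j,k})\psi_{j,k,R_{m'}}(x)\,c_{j,k}$ with $c_{j,k}=\int\overline{\varphi_{j,k,R_m}}\,\zeta_m\omega$. Here we exploit the cancellation $P\varphi_{j,k}=0$ built into $\mathcal M_{s,x,t}$: regarding $\zeta_m\omega$ as the $R_m$-trivialization of a spin function $G$, and using Proposition \ref{pl2sok} (rotated to $y_m$) to choose $\mathcal Y\in PL^2_s$ with $\mathcal Y_{R_m}(x_0)=(\zeta_m\omega)(x_0)$ and $\|\mathcal Y_{R_m}\|_{C^1}\le C$, we get $c_{j,k}=\langle G-\mathcal Y,\varphi_{j,k}\rangle$ with $(G-\mathcal Y)_{R_m}$ vanishing at $x_0$ and of $C^1$ norm $\le C/r$. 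Hence $|c_{j,k}|\le\int_{B(x_0,r)}|\varphi_{j,k}|$ and also $|c_{j,k}|\le (C/r)\int_{B(x_0,r)}d(y,x_0)\,|\varphi_{j,k}(y)|\,dy$; inserting the pointwise estimates for $|\varphi_{j,k}|,|\psi_{j,k}|$ coming from $\mathcal M_{s,x_{j,k},a^j}$, splitting the $j$-sum at $a^j\approx r$, and using \eqref{ptestm}, \eqref{alcmpN} and the disjointness of the $E_{j,k}$ — the delicate multi-scale estimate, carried out exactly as for $s=0$ in Theorem 2.3 of \cite{gmcw} — yields $\|\mathcal T_{m,m'}\omega\|_\infty\le C$; the same argument with $\varphi\leftrightarrow\psi$, $m\leftrightarrow m'$ (note $P\psi_{j,k}=0$ too) handles $\mathcal T_{m,m'}^*$. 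Theorem \ref{t1m}$(b)$ now gives $(b)$.

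Part $(a)$ amounts to $\sum_{j,k}\mu(E_{j,k})|\langle F,\varphi_{j,k}\rangle|\,\|\psi_{j,k}\|_\infty<\infty$ for $F\in C^1_s(S^2)$. The same cancellation trick — now subtracting from $F$, via Proposition \ref{pl2sok}, an element of $PL^2_s$ agreeing with it at $x_{j,k}$ — gives $|\langle F,\varphi_{j,k}\rangle|\le C\|F\|_{C^1}a^j$ for $a^j$ small, while $|\langle F,\varphi_{j,k}\rangle|\le\|F\|_2\|\varphi_{j,k}\|_2\le C\|F\|_2 a^{-2j}$ disposes of the coarse scales; together with $\|\psi_{j,k}\|_\infty\le C a^{-2j}$ and $\sum_k\mu(E_{j,k})|\psi_{j,k}(x)|\le C$, the two resulting geometric series in $j$ converge, uniformly in $x$. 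For $(c)$ and $(d)$, note first that $(b)$, applied with $\psi_{j,k}$ replaced by $\varphi_{j,k}$ (resp.\ $\psi_{j,k}$), gives the Bessel bounds $\sum_{j,k}\mu(E_{j,k})|\langle F,\varphi_{j,k}\rangle|^2\le C\|F\|_2^2$ (resp.\ for $\psi_{j,k}$); then $(c)$ follows from $(a)$, $(b)$ and these bounds by a routine density and limiting argument, and $(d)$ by pairing the series of $(c)$ with $G$ and applying Cauchy--Schwarz. The main obstacle is $(i)'$: making the orthogonality $P\varphi_{j,k}=0$ interact correctly with the trivializations and then pushing through the multi-scale summation — everything else being bookkeeping once Propositions \ref{nxt} and \ref{pl2sok} are available.
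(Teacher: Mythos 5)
Your proposal follows essentially the same path as the paper's proof: localize with a partition of unity, trivialize to scalar operators, verify the hypotheses of the $T(1)$ theorem (Theorem \ref{t1m}) using Proposition \ref{nxt} for the kernel bounds and the cancellation $P\varphi_{j,k}=0$ together with Proposition \ref{pl2sok} for the bump-function estimate $(i)'$, and then deduce $(a)$, $(c)$, $(d)$ by soft arguments. (The only thing worth flagging is a small typo in your coarse-scale bound for part $(a)$: $\|\varphi_{j,k}\|_2\le Ca^{-j}$, not $Ca^{-2j}$, but this still yields a convergent geometric series, so the argument is unaffected; the paper handles the same point by reducing $(a)$ to bump functions and proving the single estimate $(*)$, which then doubles as the verification of $(i)'$.)
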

{\bf Proof}  We first show $(a)$ and $(b)$.  As we shall explain, $(c)$ and $(d)$ are
immediate consequences.

For $(a)$ and $(b)$, it suffices to show that for each $m,q$ with $1 \leq m,q \leq M$, we may define
\begin{equation}
\label{sumopdfpq2}
S_{mq}F =
\sum_{j}\sum_k \mu(E_{j,k}) \langle  \zeta_q F, \varphi_{j,k}  \rangle  \zeta_m\psi_{j,k}
\end{equation}
initially for $F \in C_s^1(S^2)$, with $(a)$ and $(b)$ satisfied for each $S_{mq}$ in place
of $S$.  For instance, $(a)$ for $S$ will follow, since the termwise sum of a finite number
of absolutely convergent series is surely absolutely convergent.  Of course, once these results 
are proved, we will know that each $S_{mq} = \zeta_m S \zeta_q$.  

%Say first that if supp$\zeta_m \cap$ supp$\zeta_q$ is empty; then those
%supports are separated by a positive distance, say $\eta$.  In that case, even if we only
%know that $F \in L^1_s(S^2)$, we have for the $(j,k)$ term in (\ref{sumopdfpq2}),
%\begin{equation}
%\label{l1sep}
%|\mu(E_{j,k}) \langle  \zeta_q F, \varphi_{j,k}  \rangle  \psi_{j,k}(x)|
%\leq \int |F(y)| |\zeta_q(y)\varphi_{j,k}(y)| dS(y) |\zeta_m(x) \psi_{j,k}(x)|.
%\end{equation}

%We have that $|\zeta_q\varphi_{j,k}|, |\zeta_m \psi_{j,k}| \in {\mathcal N}_{x_{j,k}, a^j}$.
%Thus, by Proposition \ref{nxt} (b), the sum
%\begin{equation}
%\label{kerdsj}
%\sum_{j,k} |\zeta_q(y)\varphi_{j,k}(y)| |\zeta_m(x) \psi_{j,k}(x)|
%\end{equation}
%converges uniformly on $S^2 \times S^2$.  Thus $(a)$ for $S_{mq}$ is immediate, even
%for $F \in L^1_s(S^2)$.  Moreover we see as well that $S_{mq}$ as defined in 
%(\ref{sumopdfpq2}) is a bounded operator from $L^1_s(S^2)$ to $L^{\infty}_s(S^2)$,
%from which $(b)$ for $S_{mq}$ is immediate as well.

%Say now instead that supp$\zeta_m \cap$ supp$\zeta_q$ is not empty. 
Recall that supp$\zeta_q \subseteq B(y_q,\pi/10)$.  For $(a)$ for $S_{mq}$, then, it is
enough to show that for any $F \in C^1_s$, with $\supp F \subseteq B(y_q,\pi/10)$,
$\sum_{j}\sum_k \mu(E_{j,k}) \langle  F, \varphi_{j,k}  \rangle  \psi_{j,k}$ converges
absolutely, uniformly on $S^2$.  (As usual, supp$F$ is the closure of $\{x: F(x) \neq 0\}$.
Also, we won't need to localize with $\zeta_m$ 
in this part of the proof.)  In fact, since for any such $F$, $F_{R_q}$ is a multiple
of an $r$-bump function for $r = \pi/10$, it is enough to show the following:\\
\ \\
(*) Say $F \in C^1_s$, with supp$F \subseteq B(y_q,\pi/10)$, and that $F_{R_q}$
is an $r$-bump function for some $r \leq \pi/10$.  Then for all $x \in S^2$ and $R\in SO(3)$,
\[\sum_{j}\sum_k \mu(E_{j,k}) |\langle  F, \varphi_{j,k}  \rangle| |\psi_{j,k}(x)| \leq C_1. \]
(Here and in the sequel, for a spin function $\psi$,
$|\psi(x)| := |(\psi_R(x)|$ for any $R$ with $x \in U_R$.)
Here the convergence is uniform on $S^2$, and the constant $C_1$ may be chosen independently
of the choice of $\{\varphi_{j,k}\}, \{\psi_{j,k}\}$, $E_{j,k}$, $x_{j,k}$, $r$ or $F$.\\
\ \\
To prove (*), first, for each $j$, let
\[ B_j = \sup_k \left|  \langle  F, \varphi_{j,k} \rangle \right|. \]
 
Since $ \left| \langle  F, \varphi_{j,k} \rangle \right| \leq \|F\|_1\|\varphi_{j,k}\|_{\infty}$
we surely have that 

\begin{equation}
\label{cjrest1}
B_j \leq Cr^2 a^{-2j}.
\end{equation}
(Here, and in the rest of the proof of (*), $C$ denotes a constant, which may change from one
appearance to the next, but which may be chosen independently of the choice of
$\{\varphi_{j,k}\}, \{\psi_{j,k}\}$, $r$ or $F$.)\\

On the other hand, we claim that if $a^j \leq r$, then
\begin{equation}
\label{cjrest2}
B_j \leq Ca^j/r.
\end{equation}

To see this, we first show that there exists $C_0 > 0$ as follows.  Let
  $R \in SO(3)$ and
$\omega$ be  an $r$-bump function, $r < \pi/10$, with supp$\omega \subseteq B(R{\bf E},\pi/10)$.
Then for all $x,u \in S^2$,
\begin{equation}
\label{bmpest}
|\omega(x) - {\cal Y}_{pI}(R^{-1}x)\omega(u)| \leq C_0d(x,u)/r,
\end{equation}
where  ${\cal Y}_p$ is as in Proposition \ref{pl2sok}, for $p = R^{-1}u$.
(To be clear, ${\cal Y}_{pI}(R^{-1}x)\omega(u)$ is taken to be zero here if $\omega(u) = 0$,
and in particular if $u \notin B(R{\bf E},\pi/10)$, 
even if ${\cal Y}_{pI}(R^{-1}x)$ is not defined.)  To prove (\ref{bmpest}), note that 
the left side is surely bounded (independent of choice of $R, r, \omega$, $x$ or $u$),
by Proposition \ref{pl2sok} $(ii)$ -- since, in checking this, we may assume that $u \in 
B(R{\bf E},\pi/10)$, in which case $p \in B({\bf E}, \pi/10)$.  Since the left side is bounded, we 
may also assume that $d(x,u) \leq r < \pi/10$.  The left side of (\ref{bmpest}) is
zero unless at least one of $x,u \in B(R{\bf E},\pi/10)$, and hence we may assume that both
$x,u \in B(R{\bf E},\pi/5)$.  Since   $\omega \circ R_1 $ is a fixed constant times an $r$-bump
function for all $R_1 \in SO(3)$, by writing $\omega(x) = (\omega \circ R)(x')$,
$\omega(u) = (\omega \circ R)(u')$, $x'=R^{-1}x$, $u'=R^{-1}u=p$ in (\ref{bmpest}), we see that in (\ref{bmpest}), we
may take $R = I$, in which case $u=p$.  But, if we note that $\omega(p)-{\cal Y}_{pI}(p)\omega(p) = 0$,
we now see that (\ref{bmpest}) (for $R=I$, $x,p \in B({\bf E},\pi/5)$) follows at once from
Proposition \ref{pl2sok} $(iii)$ and the mean value theorem.  (One can use stereographic 
projection onto the tangent plane to $S^2$ at ${\bf E}$ before applying the mean value theorem.) 
This proves (\ref{bmpest}) in full generality.

We also note that in (\ref{bmpest}), we have 
\begin{equation}
\label{ypir}
{\cal Y}_{pI}(R^{-1}x) =
(({\cal Y}_p^{R^{-1}})^{R})_I(R^{-1}x) = ({\cal Y}_p^{R^{-1}})_{R}(x).
\end{equation}
(Here we are abbreviating ${\cal Y}_p^{R^{-1}} = ({\cal Y}_p)^{R^{-1}}$.)

To prove (\ref{cjrest2}) we apply (\ref{bmpest}) with $R=R_q$, $\omega = F_{R_q}$, $u=x_{j,k}$.  
Since each $\varphi_{j,k}$ is in $C_{s,0}^{\infty}$, it is orthogonal to $PL^2_s$.
Thus for any $j,k$, if $p = R_q^{-1}x_{j,k}$, then
\[ \langle F, \varphi_{j,k} \rangle = \langle F - {\cal Y}_p^{R_q^{-1}}, \varphi_{j,k} \rangle 
= \langle F_{R_q} - ({\cal Y}_p^{R_q^{-1}})_{R_q}, (\varphi_{j,k})_{R_q} \rangle.\]
Thus, by (\ref{bmpest}) and (\ref{ypir}),
 \begin{align}\label{goholder}\left| \langle  F,  \varphi_{j,k} \rangle \right|
 &\leq C\int r^{-1}d(x,x_{j,k}) \left| \varphi_{j,k}(x)\right| d\mu(x)\\\notag
&\leq Cr^{-1} a^{-2j}a^j\int \left[1+d(x,x_{j,k})/a^j\right]^{-3}d\mu(x)\\\label{endholder}
&\leq Ca^j/r 
  \end{align}
by (\ref{ptestm}).  Now, for any $j$ and any $y \in S^2$, we have
\begin{align}
\sum_k \mu(E_{j,k}) \left|  \langle  F,  \varphi_{j,k} \rangle \right|\left|  \psi_{j,k}(y)\right|&\leq C_{j,r}\sum_k
\mu(E_{j,k}) \left| \psi_{j,k}(y)\right|
\label{sumsetup}\\
&\leq C C_{j,r} a^{-jn}\sum_k \mu(E_{j,k}) \left[1+d(y,x_{j,k})/a^j\right]^{-3}
\label{sumtogo}
\\
&\leq C  C_{j,r} a^{-jn}\int_{S^2}\left[1+d(y,x)/a^j\right]^{-3} d\mu(x) \label{sumgone}\\
&\leq   C  C_{j,r},  \label{ccjrhere}
\end{align}
again by (\ref{ptestm}).  (In passing from (\ref{sumtogo}) to (\ref{sumgone}), we used
(\ref{alcmpN}).)
 
Taking  the sum over $j$ we have:
\begin{align}\notag%\label{sum-ov-j}
\sum_j\sum_k   \mu(E_{j,k}) \left|  \langle  F,  \varphi_{j,k} \rangle \right| \left|  
(\psi_{j,k})_R(y)\right|&\leq C   \sum_j   C_{j,r}  \\\notag
& \leq C\left[\sum_{a^j \leq r}  r^{-1}a^j + \sum_{a^j > r}   r^{n}a^{-jn}\right] \\
& \leq C_1, \label{c1here}
\end{align}
by (\ref{cjrest1}) and (\ref{cjrest2}).  This proves (*).  Thus $(a)$ is established
for $S_{mq}$, and hence for $S$.  

%To obtain $(b)$ for $S$, we must still prove $(b)$ for $S_{mq}$ when
%supp$\zeta_m \cap$ supp$\zeta_q$ is not empty.  In that case,
%supp$\zeta_m \cup$ supp$\zeta_q \subseteq B(y_q,3\pi/10)$, whose closure
%is contained in $B(y_q,\pi/2)$, which is contained in $U_{R_q}$.  
Next, we will prove $(b)$ for 
$S_{mq}$, by applying Theorem \ref{t1m} to a scalar operator which is a ``trivialization'' of $S_{mq}$.

If $\zeta, \zeta' \in C^{\infty}(S^2)$, let us write $\zeta \prec \zeta'$ if $0 \leq \zeta, \zeta' \leq 1$
and if $\zeta' = 1$ in a neighborhood of supp$\zeta$.  Choose, then, $\zeta, \zeta'$ supported
in $B(y_q,\pi/10)$, with $\zeta_q \prec \zeta \prec \zeta'$.  

Evidently the map $F \to F_{R_q}$ is a unitary map from $L^2_s$ onto $L^2$; let ${\cal T}$ denote the
inverse of this map.  Note that if $\omega \in C^1(S^2)$ has support contained in $U_{R_q}$,
then ${\cal T}(\omega) \in C^1_s$ (since it vanishes in neighborhoods of 
$R_q{\bf N}$ and $R_q{\bf S}$).  Consequently, by $(a)$, we may define an operator 
$\tilde{S}_{mq}: C^1 \to L^{\infty}$ by
\[ \tilde{S}_{mq}\omega = [S_{mq} {\cal T}(\zeta \omega)]_{R_m}. \]
For $(b)$ for $S_{mq}$, it is enough to show that, for some $C > 0$, 
independent
of the choice of $\{\varphi_{j,k}\}, \{\psi_{j,k}\}$, $E_{j,k}$, or $x_{j,k}$, we have\\
\ \\
(**)\ \ \ \ \ \ \ \ \ \ \ \ \ \ \ \ \ \ \ \ \ \ \ \ \ \ \ \ \ \ \ \ \ \ \ \ \ \ \ \ \ \ \ \ \ \ \ \ \ \ %
$\|\tilde{S}_{mq}\omega\|_2 \leq C\|\omega\|_2$,\\
\ \\
for all $\omega \in C^1$.  For if (**) were known, then for all $F \in C_1^s$ we would have
that for all $F \in C^1_s$,
\[ \|S_{mq}F\|_2 =  \|S_{mq}(\zeta \zeta' F)\|_2 =
\|\tilde{S}_{mq}(\zeta' F_{R_q})\|_2 \leq C\|\zeta' F_{R_q}\|_2 \leq C\|F\|_2, \]
which will prove $(b)$.  (In the rest of the proof of $(b)$, $C$ will always denote a positive
constant, which may change from one appearance to the next, but which is 
independent
of the choice of $\{\varphi_{j,k}\}, \{\psi_{j,k}\}$, $E_{j,k}$, or $x_{j,k}$.)
\ \\
Let us then verify that the hypotheses of Theorem \ref{t1m} hold for $\tilde{S}_{mq}$,
where we use $(i)'$ of that theorem in place of $(i)$, and where we set
$r_0$ in that theorem to be $\pi/10$.  Firstly, let $\omega$ be an $r$-bump
function, $r \leq \pi/10$.  Then $\zeta_q \omega$ is a fixed constant times an $r$-bump function.
Let $F = \zeta_q {\cal T}(\zeta \omega)$; then $F_{R_q} = \zeta_q \omega$ is a fixed constant times an $r$-bump function,
and supp$F \subseteq B(y_q,\pi/10)$.  
We then have, by $(*)$, that $\|\tilde{S}_{mq}(\omega)\|_{\infty} \leq \|S(F)\|_\infty \leq C$
for some $C$, which gives us part of $(i)'$.  
To complete the proof of $(i)'$, we also need to consider $\tilde{S}_{mq}^*$.  
Note that for any $\omega \in C^1$ we have
\begin{equation}
\label{smqxp}
\tilde{S}_{mq}\omega = \sum \mu(E_{j,k})\langle \omega, \zeta_q (\varphi_{j,k})_{R_q} \rangle \zeta_m (\psi_{j,k})_{R_m}
\end{equation}
where the series converges absolutely.   Thus, for any $\omega, \nu \in C^1$, we have
\[ \langle \tilde{S}_{mq}\omega, \nu \rangle
 = \sum_j \sum_k \mu(E_{j,k})\langle \omega, \zeta_q (\varphi_{j,k})_{R_q} \rangle 
\langle \zeta_m (\psi_{j,k})_{R_m}, \nu \rangle, \]
where the sum converges absolutely.  
Consequently, the formal adjoint of $\tilde{S}_{mq}$ is $\tilde{S}^*_{mq}$, where, if $\nu \in C^1_s$,
\[ \tilde{S}^*_{mq}\nu = \sum_{j}\sum_k \mu(E_{j,k}) \langle  \nu, \zeta_m (\psi_{j,k})_{R_m}
  \rangle \zeta_q (\varphi_{j,k})_{R_q}. \]
Since this is of the same form as $\tilde{S}_{mq}$, we obtain $(i)'$ for $\tilde{S}_{mq}$, in its entirety.\\

As for $(ii)$, note that if the support of $\omega$ is contained in a compact set $T \subseteq S^2$,
and $x \notin T$, then there is a $\eta > 0$ such that $d(x,y) \geq \eta$ for all $y \in T$.  
Thus, by (\ref{smqxp}), $\tilde{S}_{mq}$ has 
kernel $K$ (in the sense of Theorem \ref{t1m} $(b)$), where
\begin{equation}
\label{kerint}
K(x,y) = \sum_{j,k} \zeta_q(y)(\varphi_{j,k})_{R_q}(y)\zeta_m(x) (\psi_{j,k})_{R_m}(x)
\end{equation}
since, by Proposition \ref{nxt} (b), the series converges absolutely and uniformly
for $d(x,y) \geq \eta$.
%   (We made a similar observation when discussing (\ref{kerdsj}).)

By definition of ${\mathcal M}_{s,x,t}$, if $\varphi \in {\mathcal M}_{s,x,t}$, 
then for some $C > 0$, 
then $t^{\deg Y} Y(\zeta_{\alpha}\varphi_{R_\alpha})/C \in {\mathcal N}_{x,t}$ for every $Y \in {\mathcal P}_0$
and every $\alpha$ ($1 \leq \alpha \leq M$).  We apply this with $t = a^j$ ($j \in \ZZ$).
It therefore follows from Proposition \ref{nxt} that, whenever $X$ (acting in the $x$ variable) and $Y$ (acting in the 
$y$ variable) are in ${\mathcal P}_0$, then 
\begin{equation}
\label{drvadup}
\sum_{j}\sum_k  \mu(E_{j,k})  \left| X[\zeta_m (\psi_{j,k})_{R_m}](x)
Y[\zeta_q(\overline{\varphi}_{j,k})_{R_q}](y) \right|
\leq  C_3d(x,y)^{-(2+\deg X + \deg Y)}.
\end{equation}
provided $\deg X + \deg Y \leq 1$.

Working in local
coordinates on $U_{R_m}$ we could take $X$ in (\ref{drvadup}) to be any $\partial/\partial x_l$,
and working in local coordinates on $U_{R_q}$, we could take 
$Y$ to be any $\partial/\partial y_l$. We thus see that $K(x,y)$ is $C^1$ off the 
diagonal.  Finally we see that we may bring derivatives
past the summation sign in (\ref{kerint}), and thus, for any $X, Y \in {\mathcal P}_0$,
\[ \left|XYK(x,y)\right| \leq C_3 d(x,y)^{-(2+\deg X + \deg Y)} \]
for $x \neq y$, provided $\deg X + \deg Y \leq 1$. 
This shows that $\tilde{S}_{mq}$ satisfies conditions $(ii)$, $(iii)$ and $(iv)$ of Theorem \ref{t1m},
which completes the proof of $(b)$.
\begin{proof}[Proof of (c)]
It is evident, by (a), that (c) holds for $F \in C^1_s$.

Suppose now that $\mathcal{F} \subseteq \{(j,k): j \in \ZZ, 1 \leq k \leq N_j\}$ is finite, and define
$S^{\mathcal{F}}: L^2_s \rightarrow C^1_s$ by 
\begin{align}\notag
S^{\mathcal{F}}F=
\sum_{(j,k) \in {\cal F}} \mu(E_{j,k}) \langle  F, \varphi_{j,k}  \rangle  \psi_{j,k}
\end{align}
By (b), $S^{\mathcal{F}}: L^2_s \rightarrow L^2_s$ is bounded, with norm
$\|S^{\mathcal{F}}\| \leq C_2$
{\em for all } ${\mathcal F}$.  (Indeed, in the formula for $S$,
we have just replaced the $\varphi_{j,k}$ by $0$
if $j,k \notin {\cal F}$, and $0$ is surely in
all ${\mathcal M}_{s,x_{j,k},a^j}$.)  Since (c) is true for $F \in C^1_s$, which
is dense in $L^2_s$, it now follows for all $F \in L^2_s$.
\end{proof} 
\begin{proof}[Proof of (d)]
 
This follows at once from (c), since a series of complex numbers conveges 
unconditonally if and only if it converges absolutely.\end{proof}

\section{Frames}\label{spinfr}
As we shall now explain, it is not difficult to now adapt the arguments of \cite{gmfr}
to obtain nearly tight frames of spin wavelets for $S^2$.  Besides accounting for spin,
we make another change in the arguments of \cite{gmfr}:  in \cite{gmfr}, we looked
at certain charts on the manifold; on $S^2$, it is natural to use the charts obtained through
stereographic projections.

Recall that in section 2 of \cite{gmcw}, we proved the following general fact.

\begin{lemma}
\label{gelmaylem}
Let $T$ be a positive self-adjoint operator on a Hilbert space ${\cal H}$, and
let $P$ be the projection onto the null space of $T$.  
Suppose $l \geq 1$ is an integer, $f_0 \in {\cal S}(\RR^+)$, 
$f_0 \not\equiv 0$, and let $f(s) = s^l f_0(s)$.  
Suppose that $a > 0$  $(a \neq 1)$
is such that the {\em Daubechies condition} holds: for any $u > 0$, 
\begin{equation}
\label{daubrep}
0 < A_a \leq \sum_{j=-\infty}^{\infty} |f(a^{2j} u)|^2 \leq B_a < \infty,
\end{equation}
Then 
$\lim_{M, N \rightarrow \infty} 
\left[\sum_{j=-M}^N |f|^2(a^{2j} T)\right]$
exists in the strong operator topology on ${\cal H}$; we denote this limit by 
$\sum_{j=-\infty}^{\infty} |f|^2(a^{2j} T)$.  Moreover
\begin{equation}
\label{strsum1}
A_a(I-P) \leq \sum_{j=-\infty}^{\infty} |f|^2(a^{2j} T) \leq
B_a(I-P). 
\end{equation}
\end{lemma}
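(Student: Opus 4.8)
The plan is to reduce the statement to a pointwise (scalar) statement about the spectral measure of $T$ and the function $f$, via the spectral theorem. First I would fix the self-adjoint positive operator $T$ with spectral resolution $T = \int_{[0,\infty)} \lambda\, dE(\lambda)$, so that $P = E(\{0\})$. For any bounded Borel function $\Phi$ we have $\Phi(T) = \int \Phi(\lambda)\,dE(\lambda)$, and for $h \in \mathcal{H}$, $\langle \Phi(T)h,h\rangle = \int |\Phi(\lambda)|^2$-type integrals against the positive measure $\mu_h(\cdot) = \langle E(\cdot)h,h\rangle$. The key observation is that since $f(s) = s^l f_0(s)$ with $f_0 \in \mathcal{S}(\RR^+)$ and $l \geq 1$, we have $f(0) = 0$ and $|f|^2$ is a bounded continuous function on $[0,\infty)$ vanishing at $0$; moreover, on any bounded interval $[0,\Lambda]$ the partial sums $\sum_{j=-M}^{N}|f|^2(a^{2j}\lambda)$ are uniformly bounded (by $B_a$ for $\lambda > 0$ by the Daubechies condition, and are $0$ at $\lambda = 0$) and for each fixed $\lambda > 0$ converge to a limit $g(\lambda) := \sum_{j=-\infty}^{\infty}|f|^2(a^{2j}\lambda) \in [A_a, B_a]$.

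Next I would establish the strong-operator convergence. Set $S_{M,N} = \sum_{j=-M}^{N}|f|^2(a^{2j}T)$. These are positive operators, uniformly bounded in norm by $B_a$ (by the Daubechies condition applied through the spectral theorem, noting the spectrum lies in $[0,\infty)$). For $h \in \mathcal{H}$, write $h = Ph + (I-P)h$; since $|f|^2(a^{2j}\cdot)$ vanishes at $0$, $S_{M,N}Ph = 0$, so it suffices to handle $h \in (I-P)\mathcal{H}$, i.e. $\mu_h(\{0\}) = 0$. Then
\[
\|S_{N',M'}h - S_{N,M}h\|^2 = \int \Bigl(\sum_{j \in \Lambda_{N,M,N',M'}} |f|^2(a^{2j}\lambda)\Bigr)^2 d\mu_h(\lambda),
\]
and dominated convergence (with dominating function $B_a^2$, integrable since $\mu_h$ is finite) shows this tends to $0$ as the index sets exhaust $\ZZ$, because the integrand tends to $0$ pointwise $\mu_h$-a.e. (for $\lambda > 0$ the tail of a convergent series; and $\mu_h(\{0\}) = 0$). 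Hence $S_{M,N}h$ is Cauchy, so converges; denote the limit $\sum_j |f|^2(a^{2j}T)h$. This defines the limit operator, which equals the multiplication-by-$g$ operator: $\langle \bigl(\sum_j|f|^2(a^{2j}T)\bigr)h,h\rangle = \int g(\lambda)\,d\mu_h(\lambda)$, again by dominated convergence.

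Finally I would deduce the inequalities $(\ref{strsum1})$. For $h \in \mathcal{H}$, decompose $h = Ph \oplus (I-P)h$ and integrate against $\mu_h$: the contribution from $\{0\}$ is zero since $g(0) = 0$, and on $(0,\infty)$ we have $A_a \leq g(\lambda) \leq B_a$ by the Daubechies condition (the partial sums lie in $[A_a, B_a]$ for every $u > 0$, hence so does the limit). Therefore
\[
A_a \langle (I-P)h,h\rangle = A_a \mu_h((0,\infty)) \leq \int g\, d\mu_h = \Bigl\langle \sum_j |f|^2(a^{2j}T)h,\, h\Bigr\rangle \leq B_a \mu_h((0,\infty)) = B_a\langle(I-P)h,h\rangle,
\]
which is exactly $(\ref{strsum1})$ as an operator inequality between positive self-adjoint operators. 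I do not expect a serious obstacle here; the only point requiring a little care is the interchange of summation and the spectral integral, which is justified by uniform boundedness of partial sums plus dominated convergence on the finite measure $\mu_h$ — the hypothesis $f(s) = s^l f_0(s)$ is used precisely to guarantee $f(0) = 0$ so that the null space of $T$ is killed and does not obstruct convergence.
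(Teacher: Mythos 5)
Your proof is correct and uses the standard spectral-theoretic reduction that the paper (quoting \cite{gmcw}) also relies on: diagonalize $T$, note $|f|^2$ is bounded, continuous, and vanishes at $0$, and pass the Daubechies inequality through the spectral measure $\mu_h$ after splitting off $Ph$. The dominated-convergence justification of the strong convergence (with $B_a^2$ as the dominating function on the finite measure $\mu_h$) and the identification $P=E(\{0\})$ so that $\mu_h((0,\infty))=\langle(I-P)h,h\rangle$ are exactly the right points to check, and both are handled correctly.
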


We use this general result to obtain nearly tight frames of spin wavelets:
\begin{theorem}
\label{framain}
$(a)$ Fix $a >1$, and say $c_0 , \delta_0 > 0$.  Suppose $f \in {\mathcal S}(\RR^+)$, 
and $f(0) = 0$.
Then there exists a constant $C_0 > 0$ $($depending only on $f, a, c_0$ and $\delta_0$$)$
as follows:\\
Let ${\mathcal J} \subseteq \ZZ$ be cofinite (that is, assume ${\mathcal J}^c$ is finite).
(The main interest is in the case ${\mathcal J} = \ZZ$.$)$
For $t > 0$, let $K_t = K_{t,f}$ be the kernel of $f(t^2\Delta_s)$,
so that $K_{t,R,R'}$ is as in (\ref{ktrr}).  
Say $0 < b < 1$.
Suppose that, for each $j \in {\mathcal J}$, we can write $S^2$ as a finite disjoint union of measurable sets 
$\{E_{j,k}: 1 \leq k \leq N_j\}$,
where:
\begin{equation}
\label{diamleq}
\mbox{the diameter of each } E_{j,k} \mbox{ is less than or equal to } ba^j, 
\end{equation}
and where:
\begin{equation}
\label{measgeq}
\mbox{for each } j \mbox{ with } ba^j < \delta_0,\: \mu(E_{j,k}) \geq c_0(ba^j)^n.
\end{equation}
$($By the remarks just before Theorem 2.4 of \cite{gmfr}, there are $c_0', \delta > 0$ such that we
can this is always possible if $c_0 \leq c_0^{\prime}$ and $\delta_0 \leq 2\delta$.  In fact, it is easily
seen that one can find such $E_{j,k}$ bounded by latitude and longitude lines.$)$
Select $x_{j,k} \in E_{j,k}$ for each $j,k$, and select $R_{j,k}$ with $x_{j,k} \in U_{R_{j,k}}$.
Let
\begin{equation}
\label{phwt}
\Phi_{x_{j,k},a^j} = W_{x_{j,k},a^j,R_{j,k}}
\end{equation}
  be as in (\ref{needdf0}).  
By Theorem 1.1, there is a constant $C$ $($independent of the
choice of $b$ or the $E_{j,k}$$)$, such that $\Phi_{x_{j,k},a^j}\in C{\mathcal M}_{s,x_{j,k},a^j}$
for all $j,k$.  Thus, if for $1 \leq k \leq N_j$, we set
\[ \varphi_{j,k} = 
\begin{cases}
\Phi_{x_{j,k},a^j} & \mbox{ if}\;\; j \in {\mathcal J},  \cr 
0  & \text{otherwise},  \cr
\end{cases}
\]
we may thus form the summation operator $S^{\mathcal J}$ with

\begin{equation}\notag%\label{sumjopdf2}
S^{\mathcal J}F = S_{\{\varphi_{j,k}\},\{\varphi_{j,k}\}}F = 
\sum_{j}\sum_k \mu(E_{j,k}) \langle  F, \varphi_{j,k}  \rangle  \varphi_{j,k}. 
\end{equation}
 and Theorem $\ref{sumopthm}$ applies.\\
 
Let $Q^{\mathcal J} = \sum_{j \in {\mathcal J}} |f|^2(a^{2j} \Delta_s)$ 
(strong limit, as guaranteed by Lemma \ref{gelmaylem}). 
  Then for all $F \in L^2_s(S^2)$,

\begin{equation}
\label{qsclose}
\left| \langle  (Q^{\mathcal J}-S^{\mathcal J})F,F  \rangle \right| \leq C_0b  \langle  F,F  \rangle 
\end{equation}
$($or, equivalently, since $Q^{\mathcal J}-S^{\mathcal J}$ is self-adjoint, 
$\|Q^{\mathcal J}-S^{\mathcal J}\| \leq C_0b$.$)$\\  
$(b)$ In $(a)$, take ${\mathcal J} = \ZZ$; set $Q = 
Q^{\ZZ}$, $S = S^{\ZZ}$.  Use the notation of Lemma \ref{gelmaylem}, where now 
$T = \Delta_s$ on $L^2_s({S^2})$;
suppose in particular that the Daubechies condition $(\ref{daubrep})$ holds.  
Then,
if $P$ denotes the projection in $L^2_s$ onto ${\cal H}_{|s|,s}$, the null space of $\Delta_s$, we have
\begin{equation}
\label{snrtgt}
(A_a-C_0b)(I-P) \leq S \leq (B_a + C_0b)(I-P)
\end{equation}
as operators on $L^2_s(S^2)$.  Thus, for any $F \in (I-P)L^2_s(S^2)$,
\begin{equation}\notag%\label{snrtgt1}
(A_a-C_0b)\|F\|^2 \leq \sum_{j,k} \mu(E_{j,k})| \langle  F,\varphi_{j,k}  \rangle |^2
\leq (B_a + C_0b)\|F\|^2,
\end{equation}
so that, if $A_a - C_0b > 0$, then
$\left\{ \mu(E_{j,k})^{1/2}\varphi_{j,k}\right\}_{j,k}$ is a frame for $(I-P)L^2_s(S^2)$,
with frame bounds $A_a - C_0b$ and $B_a + C_0b$.
\end{theorem}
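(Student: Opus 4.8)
The plan is to follow the strategy of \cite{gmfr}, transplanted to the spin setting, with the $T(1)$-based boundedness of summation operators (Theorem \ref{sumopthm}) doing the analytic work. First I would observe that part $(b)$ follows almost immediately from part $(a)$: taking ${\mathcal J} = \ZZ$, Lemma \ref{gelmaylem} (applied with $T = \Delta_s$, which is positive self-adjoint on $L^2_s(S^2)$ with null space ${\cal H}_{|s|,s}$, and with $f(s) = s\, f_0(s)$ where $f_0 \in {\cal S}(\RR^+)$, so that $f(0)=0$) gives the strong limit $Q = \sum_j |f|^2(a^{2j}\Delta_s)$ and the operator inequality $A_a(I-P) \leq Q \leq B_a(I-P)$. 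Combining this with $\|Q - S\| \leq C_0 b$ from $(a)$, and noting that $S$ annihilates $PL^2_s$ (since each $\varphi_{j,k} \in C^{\infty}_{s,0}$, so $\langle F,\varphi_{j,k}\rangle$ depends only on $(I-P)F$, and $\varphi_{j,k} \in (I-P)L^2_s$), one gets $(A_a - C_0 b)(I-P) \leq S \leq (B_a + C_0 b)(I-P)$. Pairing against $F \in (I-P)L^2_s$ and using part $(d)$ of Theorem \ref{sumopthm} with $G = F$ (so that $\langle SF,F\rangle = \sum_{j,k}\mu(E_{j,k})|\langle F,\varphi_{j,k}\rangle|^2$) yields the frame inequality, and the frame conclusion is then standard.

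The substance is therefore in $(a)$, i.e. the estimate $\|Q^{\mathcal J} - S^{\mathcal J}\| \leq C_0 b$. Here I would compare the operators block-by-block in $j$. For a fixed $j$, the operator $|f|^2(a^{2j}\Delta_s)$ has kernel $\overline{K}_{a^j}(x,\cdot)$-type structure, and by (\ref{ktrr})--(\ref{needdf0}) one has $|f|^2(a^{2j}\Delta_s)F = \sum_k \langle F, W_{x,a^j,R}\rangle$-style integrals; more precisely $(|f|^2(a^{2j}\Delta_s)F)_R(x) = \int \overline{K}_{a^j,R,R'}(x,y)\,(\text{stuff})$, and the ``continuous'' version of the $j$-th summand of $S^{\mathcal J}$ integrated over $E_{j,k}$ reproduces exactly this, up to an error controlled by the diameter bound (\ref{diamleq}) and the measure lower bound (\ref{measgeq}). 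The key point is that replacing $\int_{S^2}$ by the Riemann-type sum $\sum_k \mu(E_{j,k})(\cdots)_{x_{j,k}}$ costs a factor proportional to the mesh size $ba^j$ relative to the natural scale $a^j$, hence a factor $\leq Cb$, \emph{uniformly in $j$}, because of the near-diagonal localization of Theorem \ref{locestls} (which is what lets one apply the difference estimate (\ref{alcmpN}) together with (\ref{bmpest})-type mean value arguments and the summability (\ref{ptestm}) and Proposition \ref{nxt}). Summing the $j$-blocks then requires that the per-block operator-norm errors be summable; this is where the Daubechies-type decay built into $f$ (in particular $f(0)=0$, giving the factor $s^l$) and the rapid decrease of $f$ at infinity enter, exactly as in \cite{gmfr}. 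Concretely, I expect to write $Q^{\mathcal J} - S^{\mathcal J} = \sum_j (Q_j - S_j)$ where $Q_j = |f|^2(a^{2j}\Delta_s)$ and $S_j$ is the $j$-th block of $S^{\mathcal J}$, bound $\|Q_j - S_j\| \leq C b \min(a^{2jl}, a^{-2jm})$ for suitable exponents via the $T(1)$ theorem applied to the difference (its kernel satisfies the requisite size and regularity bounds with an extra $b$, using the cancellation that the sum over $k$ of $\mu(E_{j,k})\delta_{x_{j,k}}$ approximates Lebesgue measure), and then sum a convergent series in $j$.

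The main obstacle will be making the kernel-level comparison between $Q_j$ and $S_j$ precise enough to feed into Theorem \ref{t1m}: one needs to show that the kernel of $Q_j - S_j$ still obeys near-diagonal size bounds of the form $Cb\cdot(a^j)^{-(2+\deg X+\deg Y)}(d(x,y)/a^j)^{-N}$ after differentiation, where the crucial gain of $b$ comes from a discrete-versus-continuous cancellation. This requires combining Theorem \ref{locestls} (applied to $f$ and to the derivatives of the relevant $W$'s — legitimate since $\varphi_{j,k} \in C{\mathcal M}_{s,x_{j,k},a^j}$ by Theorem \ref{locestls} itself) with the mean value estimate (\ref{bmpest}) adapted to the $PL^2_s$-orthogonality, and Proposition \ref{nxt} to absorb the $\sum_k \mu(E_{j,k})$. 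Once that per-$j$ estimate is in hand, Theorem \ref{t1m}$(b)$ (with $(i)'$) converts it into an operator-norm bound $\leq C b$ per block, and the summation in $j$ closes as in \cite{gmfr}. I would structure the write-up to mirror the proof of Theorem 2.4 of \cite{gmfr} as closely as possible, pointing to that argument for the purely scalar estimates and only spelling out the spin-specific modifications (the trivializations $\varphi_{j,k} \mapsto (\varphi_{j,k})_{R_q}$, the use of Proposition \ref{pl2sok} and (\ref{bmpest}) in place of their scalar analogues, and the bookkeeping of the cofinite index set ${\mathcal J}$ so that the finitely many missing scales contribute a bounded operator controlled by $C_0 b$ after adjusting constants).
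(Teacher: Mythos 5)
Your treatment of part $(b)$ from part $(a)$ is correct and matches the paper's argument exactly: Lemma \ref{gelmaylem} with $T=\Delta_s$ gives $A_a(I-P)\leq Q\leq B_a(I-P)$, $\|Q-S\|\leq C_0b$ gives the two-sided operator inequality, and Theorem \ref{sumopthm}$(d)$ with $G=F$ turns $\langle SF,F\rangle$ into $\sum_{j,k}\mu(E_{j,k})|\langle F,\varphi_{j,k}\rangle|^2$.

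For part $(a)$, however, you propose a genuinely different route from the paper, and there are real gaps in it. You plan to write $Q^{\mathcal J}-S^{\mathcal J}=\sum_j(Q_j-S_j)$, prove a per-block bound $\|Q_j-S_j\|\leq Cb\min(a^{2jl},a^{-2jm})$ by applying Theorem \ref{t1m} to the difference kernel, and then sum in $j$. Three problems. First, the $T(1)$ theorem gives operator-norm bounds of size $O(1)$: the near-diagonal estimates of Theorem \ref{locestls}, when converted to Calder\'on--Zygmund form $|K_j(x,y)|\leq A\,d(x,y)^{-2}$, yield a constant $A$ that is \emph{uniform in $j$}, not decaying. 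The decay you want as $j\to\pm\infty$ is a spectral/Schur-test phenomenon, not a CZ phenomenon, and you have not explained how to make the two produce a common per-block factor $\min(a^{2jl},a^{-2jm})$ \emph{and} a simultaneous factor $b$. Second, the measure lower bound (\ref{measgeq}) only holds for $j$ with $ba^j<\delta_0$; for larger $j$ the Riemann-sum comparison that is supposed to produce the factor $b$ in your difference-kernel estimate has no content, and you never address this threshold. Third, you gesture at ``(\ref{bmpest})-type mean value arguments'' to extract the $b$, but (\ref{bmpest}) belongs to the proof of Theorem \ref{sumopthm}; it is not the mechanism used here.

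The paper's actual proof of $(a)$ is organized to avoid all three difficulties. It pairs directly against $F$ and writes $\langle(Q^{\mathcal J}-S^{\mathcal J})F,F\rangle$ as $\sum_j[\int_{S^2}|\langle W_{x,a^j},F\rangle|^2\,d\mu(x)-\sum_k\mu(E_{j,k})|\langle W_{x_{j,k},a^j},F\rangle|^2]$, then splits the $j$-sum at $\Omega_b=\log_a(\delta_0/b)$. The tail $j\geq\Omega_b$ is bounded brutally term by term using $\|W_{x,t,R}\|_2^2\leq C/t$ for $t>1$ (a spectral estimate), giving $\leq Cb\|F\|^2$; here the $T(1)$ machinery plays no role, and (\ref{measgeq}) is not needed. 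For $j<\Omega_b$, after a stereographic-projection change of variables each $E_{j,k}$-integral becomes an integral over a fixed unit ball ${\cal B}$, and the fundamental theorem of calculus in an interpolation parameter $u$ is used to rewrite the integrand difference as $\int_0^1\partial_u|\langle\varphi_{j,k}^{w,u},F\rangle|^2\,du$; the chain rule applied to $v(ba^j)uw$ manufactures the factor $b$, and the resulting $\varphi_{j,k}^{w,u},\psi_{j,k}^{w,u}$ lie in $C{\cal M}_{s,x_{j,k},a^j}$, so Theorem \ref{sumopthm} (applied to the \emph{entire} sum over $j<\Omega_b$ at once, not per block) gives the uniform bound. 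Thus the paper never needs per-block decay estimates, never applies $T(1)$ to a single $Q_j-S_j$, and never needs to reconcile the $b$-gain with a $j$-decaying constant. If you want to carry out your per-block plan, you would at minimum need to (i) introduce the $\Omega_b$ cutoff and treat $j\geq\Omega_b$ separately, and (ii) replace the Calder\'on--Zygmund route to per-block decay with a direct spectral or Schur bound, interpolated against the mesh estimate — which is considerably more delicate than what you have sketched.
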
  
{\bf Note}
By Lemma 7.6 of \cite{gmst}, $B_a/A_a = 1 + O(|(a-1)^2 (\log|a-1|)|)$; thus, evidently,
$(B_a + C_0b)/(A_a - C_0b)$ can be made arbitrarily close to $B_a/A_a$ by
choosing $b$ sufficiently small.  So we have constructed ``nearly tight" frames
for $(I-P)L^2_s({S^2})$.
\begin{proof}
  We prove $(a)$.  To simplify the notation, in the proof of (a), $j$ will
always implicitly be restricted to lie in ${\mathcal J}$, unless otherwise explicitly stated.

Since $\delta_0$ only occurs in (\ref{measgeq}), we may assume $\delta_0 < \pi/10$; for otherwise, we may
replace $\delta_0$ by any positive number less than $\pi/10$, and (\ref{measgeq}) still holds.  

The definition of $S^{\mathcal J}$ evidently does not depend on the choice of $R_{j,k}$.  
For the proof of $(a)$, let us fix, for each $j,k$, an $m = m(j,k)$ with
$x_{j,k} \in B(y_m,\pi/10)$.  Then $R_m^{-1}x_{j,k} \in
B(R_m^{-1}y_m,\pi/10) = B({\bf E},\pi/10) \subseteq U_I$,
so surely $x_{j,k} \in U_{R_m}$.  Let us take $R_{j,k} = R_{m(j,k)}$,
so that we do have $x_{j,k} \in U_{R_{j,k}}$.  

Since $Q^{\mathcal J}$ and $S^{\mathcal J}$ are bounded operators, we need only show (\ref{qsclose})
for the dense subspace $C^1_s$.  For $b > 0$, we let $\Omega_b = 
\log_a (\delta_0/b)$, so that $j < \Omega_b$ is equivalent to 
$ba^j < \delta_0$.  

Note that, if $F \in L^2_s$, $x \in S^2$ and $x \in U_R$, 
then $|[f(t^2\Delta_s)F](x)| = |\langle W_{x,t,R},F \rangle|$, independent of 
choice of $R$.  Let us denote this quantity by $|\langle W_{x,t},F \rangle|$,

Observe now that 
\begin{align}\notag
\left| \langle  (Q^{\mathcal J}-S^{\mathcal J})F,F  \rangle \right|
& = \left|\sum_j \int_{S^2} \left| \langle  W_{x,a^j},F  \rangle  \right|^2 d\mu(x)
- \sum_j \sum_k \mu(E_{j,k})\left| \langle  W_{x_{j,k},a^j},F  \rangle  \right|^2\right|\\\notag
& \leq I + II,
\end{align}
where
\[ I = \left| \sum_{j < \Omega_b}\left[\int_{S^2} \left| \langle  W_{x,a^j},F  \rangle  \right|^2 d\mu(x) -
\sum_k \mu(E_{j,k})\left| \langle  W_{x_{j,k},a^j},F  \rangle  \right|^2\right] \right|, \]
and
\[ II = \sum_{j \geq \Omega_b} \int_{S^2} | \langle  W_{x,a^j},F  \rangle  |^2 d\mu(x)
+ \sum_{j \geq \Omega_b} \sum_k \mu(E_{j,k})| \langle  W_{x_{j,k},a^j},F  \rangle  |^2. \]

For $II$, we need only note that, by Theorem 4.2(a) of \cite{GeMari08}, there exist
$C,M > 0$ such that $\|{}_sY_{lm}\|_{\infty} \leq C(l+1)^M$ for all $l,m$.  
Select $Q > 2M+2$.  Then, by (\ref{needdf0}), for any $x,t,R$, if $x \in U_R$ and $t > 1$, then
\[ \|W_{x,t,R}\|_2^2 \leq C\sum_{l=|s|+1}^{\infty} |f(t^2\lambda_{ls})|^2 (l+1)^{2M+1} 
\leq C t^{-2Q} \sum_{l=1}^{\infty} (l+1)^{2M-2Q+1} < Ct^{-2Q} \]
since $f \in {\cal S}(\RR^+)$, and since, in the first summation, we always have
$t^2\lambda_{ls} \geq |s|+2$.  

In particular, there exists $C > 0$ such that, for any $x,R$ with $x \in U_R$, and any $t > 1$,
$\|W_{x,t,R}\|_2^2 \leq C/t$.  Accordingly,

\[ II\leq C \sum_{j \geq \Omega_b} a^{-j}\mu(S^2)\|F\|_2^2 \leq Ca^{-\Omega_b}\|F\|_2^2
= Cb\|F\|_2^2. \]
\ \\
Thus we may focus our attention on $I$.  We have 
\begin{align}\notag
I & = \left| \sum_{j < \Omega_b}\sum_k \left[\int_{E_{j,k}} \left| \langle  W_{x,a^j},F  \rangle  \right|^2 d\mu(x) -
\sum_k \mu(E_{j,k})\left| \langle  W_{x_{j,k},a^j},F  \rangle  \right|^2\right] \right|,\\
& = \left| \sum_{j < \Omega_b}\sum_k \int_{E_{j,k}}  
\left[\left| \langle  W_{x,a^j},F  \rangle  \right|^2 - \left| \langle  W_{x_{j,k},a^j},F  \rangle  \right|^2\right]d\mu(x) \right|.
\label{iprep}
\end{align}
Next, let ${\cal P}$ denote the usual stereographic projection onto the tangent plane to $S^2$ at ${\bf S}$.
For each $j,k$, select $T_{j,k} \in SO(3)$ with $T_{j,k}x_{j,k} = {\bf S}$, and let
${\cal P}_{j,k} = {\cal P}T_{j,k}$; then ${\cal P}_{j,k}$ could be interpreted as a stereographic
projection onto the tangent plane to $S^2$ at $x_{j,k}$.   For $r_1 > 0$, let ${\cal B}(0,r_1)$
denote the open ball of radius $r_1$ centered at $0$ in $\R^2$.  
For $0 < r < \pi$, ${\cal P}_{j,k}: B(0,r)
\to {\cal B}(0,v(r))$ diffeomorphically, where $v(r) = 2\tan(r/2)$.  Moreover, for some smooth positive function 
$h$ on $R^2$, for any $z_0 \in L^1(B(0,r))$,
\[ \int_{B({\bf S},r)} z_0(x)dx = \int_{{\cal B}(0,v(r))} z_0({\cal P}^{-1}w) h(w) dw. \]
Since rotations preserve the measure on $S^2$, we also have that 
for any $z \in L^1(B(x_{j,k},r))$,
\[ \int_{B(x_{j,k},r)} z(x)dx = \int_{{\cal B}(0,v(r))} z({\cal P}_{j,k}^{-1}w) h(w) dw, \]
Thus

\begin{align}\notag
I & = \left|\sum_{j < \Omega_b}\sum_k \int_{B(x_{j,k},ba^j)} 
\left[\left| \langle  W_{x,a^j},F  \rangle  \right|^2 - \left| \langle  W_{x_{j,k},a^j},F  \rangle  \right|^2\right] 
\chi_{E_{j,k}}(x)dx\right| \\
& = \left|\sum_{j < \Omega_b}\sum_k \int_{{\cal B}(0,v(ba^j))} 
\left[\left| \langle  W_{{\cal P}_{j,k}^{-1}w,a^j},F  \rangle  \right|^2 - 
\left| \langle  W_{x_{j,k},a^j},F  \rangle  \right|^2\right] \chi_{E_{j,k}}({\cal P}_{j,k}^{-1}w)h(w)dw\right| \\
&= \left|\sum_{j < \Omega_b}\sum_k \left(v(ba^j)\right)^n \int_{\mathcal B}
\left[\left| \langle  W_{{\cal P}_{j,k}^{-1}(v(ba^j)w),a^j},F  \rangle \right|^2 
- \left| \langle  W_{x_{j,k},a^j},F  \rangle  \right|^2\right] H_{j,k}(w) dw\right|, \label{iifst}
\end{align}

where ${\cal B} = {\cal B}(0,1)$, and where
\begin{equation}\notag
%\label{hhdf}
H_{j,k}(w) = \chi_{E_{j,k}}({\cal P}_{j,k}^{-1}(v(ba^j)w)h(v(ba^j)w).
\end{equation}
Thus

\begin{equation}
\label{iiscd}
 I = \left|\sum_{j < \Omega_b}\sum_k \mu(E_{j,k}) 
\int_{\mathcal B} 
\left[\left| \langle  W_{{\cal P}_{j,k}^{-1}(v(ba^j)w),a^j},F  \rangle \right|^2 
- \left| \langle  W_{x_{j,k},a^j},F  \rangle  \right|^2\right] G_{j,k}(w) dw\right|,
\end{equation}
where
\[ G_{j,k}(w) = \frac{\left(v(ba^j)\right)^n}{\mu(E_{j,k})} H_{j,k}(w) \]
Note that, by (\ref{measgeq}), there is a constant $C$ such that 
$0 \leq G_{j,k}(w) \leq C$ for all $(j,k)$ with $j < \Omega_b$, and all $w \in 
{\mathcal B}$.  (Recall that $v(r) = 2\tan(r/2)$, so that for any fixed $r_0  > 0$,
there is a $C > 0$ such that $v(r) \leq Cr$ for $0 < r \leq r_0$.)

Applying the fundamental theorem of calculus, we now see that

\begin{equation}
\label{iibtr}
I = \left|\sum_{j < \Omega_b}\sum_k \mu(E_{j,k}) \int_{\mathcal B}
\int_0^1 \frac{\partial}{\partial u}\left|
 \langle  W_{{\cal P}_{j,k}^{-1}(v(ba^j)uw),a^j},F  \rangle \right|^2 du\: G_{j,k}(w) dw\right|.
\end{equation}
For $j < \Omega_b$, $w \in {\mathcal B}$ and $0 \leq u \leq 1$, 
$y \in S^2$, let us set
\begin{equation}\notag
\varphi_{j,k}^{w,u}(y)  = \sqrt{G_{j,k}(w)}W_{{\cal P}_{j,k}^{-1}((v(ba^j)uw),a^j,R_{j,k}}(y)
\end{equation}
(For this to make sense, we must check that $x^{j,k}_{w,u} := {\cal P}_{j,k}^{-1}((v(ba^j)uw) \in U_{R_{j,k}}$. 
But $|v(ba^j)uw| \leq v(ba^j)$, so $x^{j,k}_{w,u} \in B(x_{j,k},ba^j) 
\subseteq B(x_{j,k},\pi/10)$.  Accordingly, if $m = m(j,k)$, then $d(x_{w,u},y_m) < \pi/5$.  
Accordingly $R_m^{-1}x^{j,k}_{w,u} \in B(R_m^{-1}y_m,\pi/5) = B({\bf E},\pi/5) \subseteq U_I$,
so surely $x^{j,k}_{w,u} \in U_{R_m} = U_{R_{j,k}}$.)
By (\ref{iibtr}),
\begin{equation}
\label{iibtr1}
I = \left|\sum_{j < \Omega_b}\sum_k \mu(E_{j,k}) \int_{\mathcal B}
\int_0^1 \frac{\partial}{\partial u}\left|
 \langle \varphi_{j,k}^{w,u},F  \rangle \right|^2 du\: dw\right|.
\end{equation}
Note that for each $m'$ ($1 \leq m' \leq M$), and $y \in U_{R_{m'}}$, we have
\begin{equation}
\label{phiprp}
[\varphi_{j,k}^{w,u}]_{R_{m'}}(y) = \sqrt{G_{j,k}(w)}\overline{K}_{a^j,R_{j,k},R_{m'}}({\cal P}_{j,k}^{-1}(v(ba^j)uw),y).
\end{equation}
We will use the product rule to perform the $u$ differentiation in (\ref{iibtr1}), and so we must examine
\[ \psi_{j,k}^{w,s} := b^{-1}(\partial \varphi_{j,k}^{w,u}/\partial u)\]
so that for each $m'$ ($1 \leq m' \leq M$), and $y \in U_{R_{m'}}$, we have 
\begin{equation}
\label{psiprp}
[\psi_{j,k}^{w,u}]_{R_{m'}}(y) = b^{-1}\sqrt{G_{j,k}(w)}
\frac{\partial}{\partial u}\overline{K}_{a^j,R_{j,k},R_{m'}}({\cal P}_{j,k}^{-1}(v(ba^j)uw),y).
\end{equation}
Write points in $\C$ as $q_1+iq_2$.  We may define smooth vector fields $Z_1, Z_2$ on $S^2\setminus{\bf N}$ by
$Z_1\omega = \partial (\omega \circ {\cal P}^{-1})/\partial q_1$,
$Z_2\omega = \partial (\omega \circ {\cal P}^{-1})/\partial q_2$ (for $\omega \in C^1(S^2\setminus{\bf N}))$.

We recall that ${\cal P}_{j,k}^{-1} = T_{j,k}^{-1} \circ {\cal P}^{-1}$.  Fixing $y$, 
we apply Proposition \ref{rotbdd} with $R = R_{j,k}^{-1}$, $Z = Z_1$ or $Z_2$, $p = {\bf S}$,
$\delta = \pi/10$, and $\omega(x) = \overline{K}_{a^j,R_{j,k},R_{m'}}(x,y)$.  We apply the chain
rule in performing the $u$-derivative in (\ref{psiprp}), and recall again that for any $r_0 > 0$, there
is a $C > 0$ for which $v(r) \leq Cr$ whenever $0 < r \leq r_0$.  We again abbreviate 
$x^{j,k}_{w,u} = {\cal P}_{j,k}^{-1}((v(ba^j)uw)$.  We then see from  
Theorem \ref{locestls} and the definition of ${\cal M}_{x,t}$, that for some $C > 0$,
\[ \varphi_{j,k}^{w,u}, \psi_{j,k}^{w,u} \in C{\mathcal M}_{s,x^{j,k}_{w,u},a^j}, \]
whenever $j < \Omega_b$, $w \in {\mathcal B}$ and $0 \leq u \leq 1$.  However,
\[ d(x^{j,k}_{w,u},x_{j,k}) = v^{-1}(v(ba^j)w) \leq ba^j < a^j, \]
so by (\ref{nrbyok}), for some $C > 0$,
\[ \varphi_{j,k}^{w,u}, \psi_{j,k}^{w,u} \in C{\mathcal M}_{s,x_{j,k},a^j}. \]
 We now find from (\ref{iibtr}) that
\begin{align}\notag
I & = b\sum_{j < \Omega_b}\sum_k \mu(E_{j,k}) \int_{\mathcal B}
\int_0^1 \left[ \langle  \psi_{j,k}^{w,s},F  \rangle   \langle  F, \varphi_{j,k}^{w,s} \rangle  +
 \langle  \varphi_{j,k}^{w,s},F  \rangle   \langle  F,\psi_{j,k}^{w,s} \rangle \right] dsdw\\\notag
& = b \int_{\mathcal B}
\int_0^1 \left[ \langle  S_{\{\varphi_{j,k}^{w,s}\},\{\psi_{j,k}^{w,s}\}}F,F  \rangle 
+  \langle  S_{\{\psi_{j,k}^{w,s}\},\{\varphi_{j,k}^{w,s}\}}F,F  \rangle \right] dsdw\\\notag
& \leq Cb\|F\|^2
\end{align}
by Theorem \ref{sumopthm}.  (The interchange of order of summation and integration
is justified by the dominated convergence theorem and the second sentence of
Theorem \ref{sumopthm} (a).)   This proves (a).

To prove (b), we need only show (\ref{snrtgt}).  But from 
(\ref{qsclose}) and (\ref{strsum1}), we have that, if $F = (I-P)F \in L^2_s$, then
\[ (A_a - C_0b)\|F\|^2 \leq
 \langle  QF, F  \rangle  - C_0b\|F\|^2 \leq
 \langle  SF, F  \rangle  \leq  \langle  QF, F  \rangle  + C_0b\|F\|^2 
\leq (B_a + C_0b)\|F\|^2. \] 
If, on the other hand, $F \in L^2_s$ is general, we have $SF = S(I-P)F$,
since all $\varphi_{j,k} \in (I-P)L^2_s$.  Since
$S$ is self-adjoint, $ \langle  SF, F  \rangle  =  \langle  S(I-P)F, (I-P)F  \rangle $, so in
general
\[ (A_a - C_0b)\|(I-P)F\|^2 \leq  \langle  SF, F  \rangle  \leq 
(B_a + C_0b)\|(I-P)F\|^2, \] 
as desired.
\end{proof}

\section{Proof of the near-diagonal localization result}\label{genf}
In this section we prove Theorem \ref{locestls}.  As we noted, if $f$ has compact support away from 
the origin, this theorem was proved in \cite{GeMari08}.
To handle the case of general $f$, we need to adapt the proof of Lemma 4.1 in \cite{gmcw},
(where we proved near-diagonal localization of similar scalar kernels on general smooth compact
oriented Riemannian manifolds).
That proof, in section 3 of \cite{gmcw}, used two key facts:\\
\ \\
(a) Corollary of finite propagation speed for the wave equation: Say
$g \in \mathcal{S}(\RR)$ is even, and satisfies \supp$\hat{g} \subseteq (-1,1)$, and let
$K_t^g(x,y)$ be the kernel of $g(t\sqrt{\Delta})$.  Then for some $C > 0$, if $d(x,y) > C|t|$,
then $K_t^g(x,y) = 0$.\\
(b) Strichartz's theorem: if $p(\xi) \in S_1^j(\RR)$ is an ordinary symbol of order $j$, depending
only on $\xi$, then $p(\sqrt{\Delta}) \in OPS^j_{1,0}({\bf M})$.\\
\ \\
As one would expect, these facts have analogues in the spin situation, as we now explain.  First, in analogy 
to (a), we 
claim:
\begin{proposition}
\label{finprops}
Say $g \in \mathcal{S}(\RR)$ is even, and satisfies \supp$\hat{g} \subseteq (-1,1)$.  Let
$K_t^g$ be the kernel of $g(t\sqrt{\Delta_s})$.  Then, for some $C > 0$, if $d(x,y) > C|t|$,
then $K_t^g(x,y) = 0$.
\end{proposition}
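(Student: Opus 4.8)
The plan is to reduce Proposition \ref{finprops} to fact (a) from the scalar case, stated at the start of this section, together with the relation between $\Delta_s$ and the scalar Laplacian through the operators $\edth$ and $\bedth$. The key algebraic observation is that, although $\Delta_s$ is not literally the scalar Laplacian, it is intertwined with $\Delta_{s\pm1}$ by the first-order operators $\edth$ and $\bedth$: for $s \geq 0$ one has $\Delta_s = -\bedth\edth$, and the identities $\edth \Delta_s = \Delta_{s+1}\edth$, $\bedth \Delta_s = \Delta_{s-1}\bedth$ hold (these follow from the fact that $\edth$, $\bedth$ commute with the $SO(3)$ action and from comparing eigenvalues on the ${\cal H}_{ls}$). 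Consequently, on each eigenspace ${\cal H}_{ls}$ the operator $\Delta_s$ has eigenvalue $\lambda_{ls} = (l-s)(l+s+1)$ (for $s \geq 0$), and the crucial point is that $\edth^{|s|}$ carries the eigenspace ${\cal H}_{l0}$ of the \emph{scalar} Laplacian $\Delta_0$ (eigenvalue $l(l+1)$) onto ${\cal H}_{ls}$, scaling the eigenvalue from $l(l+1)$ to $(l-s)(l+s+1) = l(l+1) - s(s+1)$. So $\Delta_s$ is, up to the intertwining by $\edth^{|s|}$ (or $(-\bedth)^{|s|}$ when $s<0$), a shift of the scalar Laplacian by the constant $s(s+1)$.

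With this in hand, I would argue as follows. Write $s \geq 0$ for definiteness (the case $s<0$ is symmetric, using $\bedth$ in place of $\edth$). Let $\Lambda_s$ denote the positive self-adjoint operator $\sqrt{\Delta_s + s(s+1)}$ on the orthogonal complement of the lowest eigenspace, extended suitably; then on ${\cal H}_{ls}$ one has $\Lambda_s = \sqrt{l(l+1)} = \Lambda_0$ (as scalars). The map $\edth^{|s|}$ (suitably normalized, as in the definition of ${}_sY_{lm}$ via the $b_{ls}$) is, eigenspace by eigenspace, a multiple of a partial isometry from $L^2_0$ to $L^2_s$ intertwining $\Lambda_0$ with $\Lambda_s$; hence $g(t\Lambda_s)$ is obtained from $g(t\sqrt{\Delta_0})$ by conjugating with these normalized raising operators and differentiating $|s|$ times. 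But $\edth$ is a first-order differential operator that is \emph{local}: if a section vanishes on an open set, so does $\edth$ of it. Therefore applying $\edth^{|s|}$ and $\bedth^{|s|}$ to the scalar kernel $K_t^g(x,y)$ (in the appropriate variables, using the components $K^g_{t,R,R'}$) cannot enlarge its support. Since by fact (a) the scalar kernel of $g(t\sqrt{\Delta_0})$ vanishes for $d(x,y) > C|t|$, and since $g(t\Lambda_s)$ differs from $g(t\sqrt{\Delta_s})$ only by the bounded invertible functional-calculus factor coming from the constant shift $s(s+1)$ — which, being a function of $\Delta_s$ analytic near the spectrum, can itself be written via finite propagation speed as a convolution in $t$ against a compactly supported distribution, or handled by noting $g$ may be replaced by $\tilde g(\xi) = g(\sqrt{\xi^2 - s(s+1)})$, still even and band-limited after a harmless modification — the support statement survives. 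The cleanest route is: choose $\tilde g$ even with $\supp \hat{\tilde g} \subseteq (-1,1)$ such that $g(t\sqrt{\Delta_s}) = \tilde g(t\sqrt{\Delta_s + s(s+1)})$ on the relevant subspace, note $\Delta_s + s(s+1)$ is intertwined with $\Delta_0$ by $\edth^{|s|}/b_{ls}$, and invoke fact (a) for $\tilde g$ together with the locality of $\edth^{|s|}$.

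The main obstacle I anticipate is the bookkeeping at the bottom of the spectrum and in the trivializations. One must be careful that (i) $\Delta_s + s(s+1)$ has the single eigenvalue $0$ only on ${\cal H}_{|s|,s}$ (where $\lambda_{|s|,s} = 0$ and the shift gives $s(s+1)$... actually $\lambda_{|s|,s}=0$ so $\Delta_s+s(s+1)=s(s+1)$ there, not $0$; the shifted operator is bounded below by $s(s+1)>0$ for $s\neq 0$, which is actually convenient), and (ii) the passage from $\tilde g(\xi)$ as a function of $\sqrt{\Delta_s+s(s+1)}$ back to something one can feed into the scalar finite-propagation statement requires knowing that $\tilde g$ can indeed be chosen even, Schwartz, and band-limited — this is routine since $\xi \mapsto \sqrt{\xi^2 + s(s+1)}$ is smooth, even, and grows linearly, so $\tilde g(\xi) := g(\sqrt{\xi^2+s(s+1)})$ is even, Schwartz, and one checks (via stationary phase / Paley–Wiener) that band-limitedness of $\hat g$ is preserved up to shrinking the support constant. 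Also one must phrase "the kernel's support does not grow under $\edth^{|s|}$" correctly for spin kernels: the statement $K^g_{t,R,R'}(x,y)=0$ for $d(x,y)>C|t|$ should be checked working in fixed charts $U_R$, $U_{R'}$, where $\edth_{sR}$ is the explicit differential operator displayed in the introduction, which is manifestly local. Once these points are pinned down, the proposition follows; I expect the spin-specific content to be light, with the real work being the clean reduction to the scalar finite-propagation-speed fact already available.
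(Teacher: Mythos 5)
Your plan is to transfer the scalar finite-propagation fact (a) to $\Delta_s$ via the intertwining by $\edth^{|s|}$, but this runs into a genuine obstruction that the proposal never resolves: the map that actually intertwines $\Delta_0$ with $\Delta_s+s(s+1)$ (sending $Y_{lm}$ to ${}_sY_{lm}$) is $\edth^{|s|}/b_{ls}$, and the factor $1/b_{ls}$ with $b_{ls}=[(l+s)!/(l-s)!]^{1/2}$ is an $l$-dependent Fourier multiplier, hence \emph{not} a local operator. You invoke ``the locality of $\edth^{|s|}$'' to argue supports do not grow, but $\edth^{|s|}$ alone does not intertwine the two Laplacians; only the normalized map does, and the normalization is nonlocal. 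If instead you try to conjugate by the genuinely local operators $\edth^{[s]}$ and $\bedth^{[s]}$ (as the paper does for the Strichartz-type Proposition~\ref{strsp}), the factor $b_{ls}^{2}=[l(l+1)-\gamma_s]\cdots[l(l+1)-\gamma_1]$ lands in the middle, and you would need $g(t\sqrt{\lambda_{ls}})/b_{ls}^{2}$, multiplied by a cutoff near the bottom of the spectrum, to be of the form $h(t\sqrt{l(l+1)})$ for a band-limited Schwartz $h$. Division by a polynomial and multiplication by a non-analytic cutoff both destroy compact support of the Fourier transform, so this route does not produce a function to which fact (a) applies. A second, smaller problem: your candidate $\tilde g(\xi)=g(\sqrt{\xi^2\mp s(s+1)})$ does not satisfy $g(t\sqrt{\Delta_s})=\tilde g(t\sqrt{\Delta_s+s(s+1)})$; the correct substitution is $\tilde g_t(\xi)=g(\sqrt{\xi^2-t^2s(s+1)})$, a $t$-dependent family, so even the reduction at the level of the functional calculus is not as you state it.

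The paper avoids the intertwining altogether. It proves finite propagation speed directly for the spin wave equation $(\partial_t^2+\Delta_s)u=0$: uniqueness and existence come from the expansion (\ref{uform}) in spin spherical harmonics, with $\cos(t\sqrt{\Delta_s})F$ the solution for $G\equiv 0$; then the support claim is established by a patching argument (first for small $|t|$, then by iterating; first for data in a small ball, then by a partition of unity), which in a fixed chart $U_R$ reduces to the Euclidean finite-propagation theorem for a second-order elliptic operator with principal symbol bounded below (quoted from Taylor). Finally the Fourier-inversion identity (\ref{wavetrck}), $g(t\sqrt{\Delta_s})F=c\int_{-1}^{1}\hat g(\tau)\cos(\tau t\sqrt{\Delta_s})F\,d\tau$, converts the support estimate for $\cos(\tau t\sqrt{\Delta_s})$ into the support estimate for $K_t^g$. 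If you want to pursue a reduction in the spirit you propose, the place to use it is not in this proposition but in Proposition~\ref{strsp}, where symbol-class membership (unlike exact support) is stable under division by a polynomial and multiplication by a smooth cutoff; that is precisely what the paper does there.
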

\begin{proof}
As we shall explain, this follows directly from the finite propagation speed
property for familiar types of wave equations in $\RR^n$, which we now review.  In this
proof all functions and differential operators will be assumed to be smooth, without further comment.

Suppose that $L$ is a second-order differential operator on an open set $V$ in $\RR^n$, that $L$ is elliptic, and
in fact that, for some $c > 0$,  its
principal symbol $\sigma_2(L)(x,\xi) \geq c^2|\xi|^2$, for all $(x,\xi) \in V \times \RR^n$.  
Suppose that $U \subseteq \RR^n$ is open, and that $\overline{U} \subseteq V$.  
Then if $\supp F,G \subseteq K \subseteq U$, where $K$ is compact, then any solution $u$ of
\begin{align}
(\frac{\partial^2}{\partial t^2} + L)u = 0 \\
u(0,x) = F(x)\\
u_t(0,x) = G(x)
\end{align}
on $U$ satisfies $\supp u(t,\cdot) \subseteq \{x: $ dist $(x,K) \leq |t|/c\}$.\\
\ \\
(This is a special case of Theorem 4.5 (iii) of \cite{Tay81}.  In that reference, $V = \RR^n$.  But we can
always extend $L$ from $U$ to an operator on all of $\RR^n$ satisfying the hypotheses, by letting
$L' = \psi L + c^2(1-\psi)\Delta$ for a cutoff function $\psi \in C_c^{\infty}(V)$ which equals $1$
in a neighborhood of $\overline{U}$.)

It is an easy consequence of this that a similar result holds for spin functions on $S^2$.  Let us
look at the problem
\begin{align}
(\frac{\partial^2}{\partial t^2} + \Delta_s) u = 0 \\
u(0,x) = F(x)\\
u_t(0,x) = G(x)
\end{align}
on $S^2$.  The first thing to note is that the problem has a unique solution in any open $t$-interval
about zero.  Namely, if
$F = \sum_{l \geq |s|}\sum_m a_{lm}\:{}_sY_{lm}$,
$G = \sum_{l \geq |s|}\sum_m b_{lm}\:{}_sY_{lm}$,
then the solution is
\begin{equation}
\label{uform}
u(t,x) = \sum_{l \geq |s|}\sum_m
[a_{lm}\cos (\sqrt{\lambda_{ls}} t) + b_{lm} \frac{\sin (\sqrt{\lambda_{ls}} t)}{\sqrt{\lambda_{ls}}}]\:{}_sY_{lm}(x),
\end{equation}
where we interpret $\frac{\sin(\sqrt{\lambda_{ls}} t)}{\sqrt{\lambda_{ls}}}$ as $t$ if $\lambda_{ls} = 0$. 
This follows just as it would in the case $s=0$, from the rapid decay of the $a_{lm}$ and the
$b_{lm}$, and the formal self-adjointness of $\Delta_s$ on $C^{\infty}_s$.
(See Theorem 4.2 and equation (57) of \cite{GeMari08}.)  Note also:
\begin{equation}
\label{cosis}
u = \cos(t\sqrt{\Delta_s})F \mbox{ is the solution if } G \equiv 0.
\end{equation}

We now claim that there is an absolute constant $C > 0$,
such that if $\supp F,G \subseteq K \subseteq S^2$, ($K$ compact), then 
the solution $u$ satisfies  $\supp u(t,\cdot) \subseteq \{x: d(x,K) \leq C|t|\}$.
This is proved as follows:\\
\begin{enumerate}
\item
It is enough to show that, for some $\delta > 0$, the result is true whenever $|t| < \delta$.  For,
suppose that this is known.  It suffices then to show that if, for some $T > 0$, the result is true 
whenever $|t| < T$, then it is also true whenever $|t| < T+\delta$.  For this, say $T \leq t < T + \delta$,
and select $t_0 < T$ with $t-t_0 < \delta$.  
By assumption, $\supp u(t_0,\cdot) \subseteq K' := \{x: d(x,K) \leq Ct_0\}$, and thus also
$\supp u_t(t_0,\cdot) \subseteq K'$.  We clearly have that $u(t,x) = v(t-t_0,x)$, where
$v$ is the solution of 
\begin{align}
(\frac{\partial^2}{\partial t^2} + \Delta_s)v = 0 \\
v(0,x) = u(t_0,x)\\
v_t(0,x) = u_t(t_0,x)
\end{align}
Thus 
\[ \supp u(t,\cdot) = \supp v(t-t_0,\cdot) \subseteq \{x: d(x,K') \leq C(t-t_0)\} 
\subseteq \{x: d(x,K) \leq Ct\}\]
 as claimed.  Similarly if $-T \geq t \geq -T-\delta$.
\item
It suffices to show that, for some $\delta, \epsilon > 0$, the result is true whenever $|t| < \delta$, and
the supports of $F$ and $G$ are both contained in an open ball $B$ of radius $\epsilon$.  For, we could then cover $S^2$
by a finite number of such open balls, and choose a partition of unity $\{\zeta_j\}$ subordinate to this covering.
If we let $(F_j,G_j) = (\zeta_j F, \zeta_j G)$, and if we let $u_j$ be the solution with data $F_j, G_j$ in place
of $F,G$, then surely $u = \sum_j u_j$.  Then surely $\supp u(t,\cdot) \subseteq \{x: d(x,K) \leq C|t|\}$ 
as desired.
\item
To find approprate $\delta, \epsilon$, one need only cover $S^2$ with balls $\{B_k\}$ of some radius $\epsilon$,
for which the balls $B_k'$ with the same centers and radius $2\epsilon$ are charts, 
contained in some $U_R$, and on which, if we use
local coordinates, the geodesic distance is comparable to the Euclidean distance.  The existence of a 
suitable $\delta, C$ now follows at once from the aforementioned result for wave equations in $\RR^2$.
Indeed, on such a ball, it is equivalent to solve
\begin{align}
(\frac{\partial^2}{\partial t^2} + \Delta_{sR}) u_R = 0 \\
u_R(0,x) = F_R(x)\\
(u_R)_t(0,x) = G_R(x)
\end{align}
(Here $\Delta_{sR} = -\bedth_{s+1,R}\edth_{sR}$ if $s \geq 0$, $\Delta_{sR} = -\edth_{s-1,R}\bedth_{sR}$ if $s < 0$.)
Working in local coordinates, we are now clearly in the Euclidean situation.  This proves the ``claim''. 
\end{enumerate}
To prove the proposition, it suffices to write (for some $c$)
\begin{equation}
\label{wavetrck}
g(t{\sqrt \Delta_s})F = c\int_{-1}^{1} \hat{g}(\tau) \cos(\tau t{\sqrt \Delta_s})F d\tau
\end{equation}
for any $F \in C^{\infty}_s$.  (This is easily verified by using the spin sphercial harmonic decomposition
of $F$ and the Fourier inversion formula.)  
The proposition follows at once from (\ref{cosis}) and the ``claim''.
\end{proof}  
\ \\
Next we need to obtain an analogue of Strichartz's theorem ((b) at the beginning of this section).
For this, it is convenient to think of $C^{\infty}_s$ as sections of the line bundle ${\bf L}^s$,
as described in the introduction.  We claim:
\begin{proposition}
\label{strsp}
If $p(\xi) \in S_1^j(\RR)$, then $p(\sqrt{\Delta_s})$ is a pseudodifferential operator of order $j$
(mapping $C^{\infty}({\bf L}^s)$ to $C^{\infty}({\bf L}^s)$).
\end{proposition}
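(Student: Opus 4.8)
The plan is to reduce the spin case to the scalar version of Strichartz's theorem on the compact manifold $S^2$, by working locally in trivializations of the line bundle ${\bf L}^s$ and exploiting the fact that $\Delta_s$, written in a chart $U_R$ via the trivialization $f \mapsto f_R$, differs from a scalar elliptic operator only by lower-order terms. First I would recall that on $S^2 \setminus \{R{\bf N}, R{\bf S}\}$ the operator $\Delta_{sR}$ (which equals $-\bedth_{s+1,R}\edth_{sR}$ for $s \geq 0$, and $-\edth_{s-1,R}\bedth_{sR}$ for $s<0$) is a second-order differential operator whose principal symbol agrees with that of the ordinary Laplace--Beltrami operator $\Delta_0$ on $S^2$; this is immediate from the explicit first-order formulas for $\edth_{sR}$ and $\bedth_{sR}$, since the factors $(\sin\theta_R)^{\pm s}$ only affect terms of order $\leq 1$. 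Hence $\Delta_s$, viewed as an operator on sections of ${\bf L}^s$, is an elliptic operator of order $2$ whose principal symbol is $|\xi|_g^2$ (scalar, times the identity on the one-dimensional fibre), and in particular $\Delta_s$ has discrete spectrum $\{\lambda_{ls}\}$ with eigenspaces ${\cal H}_{ls}$ of finite dimension, as recalled in the introduction.

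Next I would invoke the standard machinery: since $\Delta_s$ is a nonnegative self-adjoint elliptic operator of order $2$ on a compact manifold, one can run Seeley's/Strichartz's argument for $p(\sqrt{\Delta_s})$ in exactly the same way as in the scalar case. Concretely, I would write, for $p \in S_1^j(\RR)$,
\[
p(\sqrt{\Delta_s}) = \frac{1}{2\pi}\int_{-\infty}^{\infty} \hat{q}(\tau)\, \cos(\tau\sqrt{\Delta_s})\, d\tau
\]
after a suitable even/odd decomposition and a splitting $p = p_0 + p_\infty$ where $p_\infty$ is supported away from the origin (so $p_\infty(\sqrt{\Delta_s})$ is smoothing by rapid decay of the spectral data, using Theorem 4.2(a) of \cite{GeMari08} for polynomial bounds on $\|{}_sY_{lm}\|_\infty$, exactly as in the proof of Theorem \ref{framain} above), and $p_0$ is handled by the wave-equation representation. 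Here Proposition \ref{finprops} supplies the finite propagation speed: the kernel of $\cos(\tau\sqrt{\Delta_s})$ is supported in $\{d(x,y) \leq C|\tau|\}$, so only the behaviour of the wave operator for small $|\tau|$ matters, and that is a local question in a chart. In a chart $U_R$ (trivializing ${\bf L}^s$), $\cos(\tau\sqrt{\Delta_{sR}})$ is the cosine propagator of a scalar elliptic second-order operator, to which the classical parametrix construction for hyperbolic equations (e.g. geometric optics / Fourier integral operators, as in \cite{Tay81}) applies verbatim; one then integrates against $\hat{q}$ and reads off that the result is a pseudodifferential operator of order $j$ in that chart. Patching over a finite cover with a partition of unity gives the global statement on $C^\infty({\bf L}^s)$.

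The main obstacle is bookkeeping rather than anything conceptual: one must be careful that the parametrix constructed chart-by-chart glues to a genuine operator on \emph{sections} of ${\bf L}^s$ rather than on scalar functions, i.e. that the transition functions $e^{is\psi_{pR_2R_1}}$ are respected. This is handled by noting that the phase functions and amplitudes in the geometric-optics parametrix depend only on the principal symbol $|\xi|_g^2$, which is the \emph{same scalar symbol} in every trivialization, so the difference between two local parametrices is a smoothing operator, and the standard argument that local parametrices agree modulo smoothing operators — combined with the fact that $p(\sqrt{\Delta_s})$ is globally well-defined by spectral theory — forces the patched object to be the correct operator. A secondary point to check is that the lower-order discrepancy between $\Delta_{sR}$ and $\Delta_0$ in a chart does not affect the order-$j$ conclusion; but this is automatic, since changing an elliptic operator by lower-order terms changes its functional calculus only by operators of strictly lower order, all of which are still in $OPS^j_{1,0}$. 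I would therefore present the proof as: (1) $\Delta_s$ is elliptic of order $2$ with scalar principal symbol; (2) split $p$; (3) use finite propagation speed plus the local scalar parametrix; (4) patch, checking compatibility with the bundle structure.
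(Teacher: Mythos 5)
Your proposal takes exactly the route that the paper explicitly mentions and then declines: ``One could check to see that Strichartz's proof in \cite{Stric72} goes over to the line bundle situation. In our specific situation, however, there is a shortcut.'' The paper's actual proof is an algebraic reduction to the \emph{scalar} Strichartz theorem: after localizing with cutoffs $\varphi, \varphi'$ in a chart $U_R$, it splits $S = \varphi' p(\sqrt{\Delta_s})\varphi$ (in the trivialization) into the finite-rank $l = |s|$ piece $S_1$ (which has a smooth kernel) plus a remainder $S_2$, and then uses ${}_sY_{lm} = \edth^s Y_{lm}/b_{ls}$ together with the factorizations $b_{ls}^2 = \prod_{n=1}^{s}\bigl[l(l+1)-\gamma_n\bigr]$ and $\lambda_{ls} = l(l+1)-s(s+1)$ to write
\[
S_2 F_R = \varphi'\,\edth^{[s]}_{R'}\, q\bigl(\sqrt{\Delta}\bigr)\, \mathcal{D}(\varphi F_R),
\]
where $\edth^{[s]}_{R'}$ and $\mathcal{D}$ are explicit differential operators of order $s$ and $q \in S_1^{j-2s}(\RR)$. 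Scalar Strichartz gives $q(\sqrt{\Delta}) \in OPS^{j-2s}_{1,0}(S^2)$, and the orders add up to $s + (j-2s) + s = j$. This trades the geometric-optics parametrix, the small-time wave analysis, and the patching argument for a short calculation specific to the sphere and the $\edth$-calculus; the bundle structure is handled just by working in one trivialization at a time, with no need to check that parametrices glue.

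Your alternative route is legitimate in principle, but as written there is a concrete error in the split $p = p_0 + p_\infty$. You take $p_\infty$ supported away from the origin and claim that $p_\infty(\sqrt{\Delta_s})$ is smoothing by ``rapid decay of the spectral data,'' while reserving $p_0$ for the wave-equation representation. This is backwards. A symbol of order $j$ supported away from $0$ is precisely the non-smoothing main term: the coefficients $p_\infty(\sqrt{\lambda_{ls}})$ grow like $l^j$, so there is no rapid decay, and Theorem 4.2(a) of \cite{GeMari08} gives nothing. It is the piece compactly supported near the origin whose spectral coefficients vanish for large $l$ and therefore yields a smooth kernel; the away-from-origin piece is exactly the one that requires the finite-propagation-speed truncation in $\tau$ and the local parametrix for $\cos(\tau\sqrt{\Delta_s})$, which in your approach you would still have to construct in the bundle setting rather than cite. (Note also that what you need is the intermediate ``claim'' inside the proof of Proposition \ref{finprops} — support of the cosine propagator — not the proposition itself, which concerns $g$ with $\hat g$ compactly supported.) Your remaining remarks — that $\Delta_s$ has scalar principal symbol so local parametrices agree modulo smoothing, and that lower-order discrepancies do not change the class $OPS^j_{1,0}$ — are correct in spirit, but they are exactly the bookkeeping the paper's shortcut is designed to avoid.
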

\begin{proof}
One could check to see that Strichartz's proof in \cite{Stric72} goes over to the line
bundle situation.  In our specific situation, however, there is a shortcut: we can reduce the result
to Strichartz's original theorem.  To do this, say $p(\xi) \in S_1^j(\RR)$, and let $T= p(\sqrt{\Delta_s})$,
defined as usual as an operator on $C^{\infty}({\bf L}^s)$ by use of the eigenfunction expansion 
for $\Delta_s$.  It is enough to show that, for every
$R,R' \in SO(3)$,  $\varphi' T \varphi$ is an operator of order $s$, for every
$\varphi \in C_c^{\infty}(U_R)$ and $\varphi' \in C_c^{\infty}(U_{R'})$.  Note also that
$C_c^{\infty}(U_R)$ equals the set of $F_R$, where $F \in C^{\infty}({\bf L}^s)$ has compact
support in $U_R$.  Locally trivializing the bundle ${\bf L}^s$,
is enough to show that the map $S$ which takes $F_R \in C_c^{\infty}(U_R)$
to $(\varphi' T \varphi F)_{R'}$ is in $OPS^j_{1,0}(S^2)$.  We will prove this by
writing $S$ in a different form.

Say $s > 0$.
If $\varphi F = \sum_{l \geq s}\sum_m A_{lm}\:{}_sY_{lm}$, then 
\begin{eqnarray*}
SF_R & = & \varphi'\sum_{l \geq s}\sum_m p(\sqrt{\lambda_{ls}})A_{lm}\:{}_sY_{lmR'}\\
& = & \varphi'\sum_{m=-s}^s p(0)A_{sm}\:{}_sY_{smR'} +
\varphi'\sum_{l \geq s+1}\sum_m p(\sqrt{\lambda_{ls}})A_{lm}\:{}_sY_{lmR'}\\
& := & S_1F_R + S_2F_R,
\end{eqnarray*}
say, and it suffices to prove that both $S_1$ and $S_2$ are in $OPS^j_{1,0}(S^2)$.
Now 
\[ A_{lm} = \langle\varphi F,\:{}_sY_{lm}\rangle = \langle\varphi F_R,\:{}_sY_{lmR}\rangle, \]
so $S_1$ has the smooth kernel $p(0)\sum_{m=-s}^s {}_sY_{smR'}(x)\:\overline{{}_sY_{smR}}(y)$.
Thus we need only consider $S_2$.
Set $\edth^{[s]}_{R'} = \edth_{s-1,R'}\ldots\edth_{0R'}$.  We may write
\begin{eqnarray*} 
S_2F_R & = & \varphi' \edth^{[s]}_{R'}\sum_{l \geq |s|}\frac{p(\sqrt{\lambda_{ls}})}{b_{ls}}A_{lm}Y_{lm}.
\end{eqnarray*}
(Here we have once again used Theorem 4.2 of \cite{GeMari08}, this time to justify the interchange of
differentiation and summation.)  We have that
\begin{equation}
\label{almalt}
A_{lm} = \langle\varphi F,\:{}_sY_{lm}\rangle = \frac{1}{b_{ls}}\langle\varphi F, \edth^s Y_{lm}\rangle
= \frac{1}{b_{ls}}\langle\bedth^s(\varphi F),Y_{lm}\rangle = \frac{1}{b_{ls}}\langle{\cal D}(\varphi F_R),Y_{lm}\rangle,
\end{equation}
where ${\cal D} = \bedth_{1R} \ldots \bedth_{sR}$ is a smooth differential operator of
degree $s$ on $U_R$.  Now, as was noted in \cite{GeMari08}, if we set
\[ \gamma_n = n(n-1), \]
then $(l+n)(l+1-n) = l(l+1)-\gamma_n$ for any $l,n$, so that
\begin{equation}
\label{lslsfac}
b_{ls}^2 =\frac{(l+s)!}{(l-s)!} = [l(l+1)-\gamma_s][l(l+1)-\gamma_{s-1}]\ldots[l(l+1)-\gamma_1].
\end{equation}
while
\begin{equation}
\label{lamls}
\lambda_{ls} = l(l+1)-\gamma_{-s} = l(l+1)-s(s+1).
\end{equation}
Accordingly, 
\begin{equation}
\label{sfrpd}
S_2F_R  =  \varphi' \edth^{[s]}_{R'}\sum_{l \geq s+1}
\frac{p \left(\sqrt{\lambda_l-s(s+1)}\right)}{[\lambda_l-\gamma_s]\ldots[\lambda_l-\gamma_1]}
\langle{\cal D}(\varphi F_R),Y_{lm}\rangle Y_{lm}
\end{equation}

Now select $\zeta \in C^{\infty}(\RR)$ with $\zeta(\xi) = 0$ for $\xi < (s+1)$,
$\zeta(\xi) = 1$ for $\xi > \sqrt{(s+1)(s+\frac{3}{2})}$. Define a function $q$ on $\RR$ by
\[ q(\xi) = \frac{\zeta(\xi) p \left(\sqrt{\xi^2-s(s+1)}\right)}{[\xi^2-\gamma_s]\ldots[\xi^2-\gamma_1]}. \]
Then, as is easily seen, $q \in S_1^{j-2s}(\RR)$.  From (\ref{sfrpd}), we find
\[ S_2F_R = \varphi' \edth^{[s]}_{R'} q(\sqrt{\Delta}) {\cal D}(\varphi F_R) \]
so that, by Strichartz's theorem, $S_2$ is a pseudodifferential operator of order $s + (j-2s) + s = j$, as desired.
Similar arguments work if $s < 0$.
\end{proof}
\ \\
{\em Proof of Theorem \ref{locestls}} Using Propositions \ref{finprops} and \ref{strsp}
the arguments of section 4 of \cite{gmcw} go over, with only very minor changes,
to prove the theorem.  There is a little bit to think about in obtaining analogues of the
two statements marked by $\rhd$, which precede Lemma 4.1 of \cite{gmcw}.  For these 
statements, we
let ${\cal U}'$ (resp. ${\cal U}$) be an open neigborhood of ${\cal F}_{R'}$ (resp. ${\cal F}_R$)
whose closure is contained in $U_{R'}$ (resp. $U_R$).
For the analogues of the two statements in question, 
one uses $K_{\sqrt{t},R',R}(x,y)$ in place of $K_{\sqrt{t}}$, and
uses $({\cal U}' \times {\cal U}) \setminus D$ in place of $({\bf M} \times {\bf M}) \setminus D$ in the statements of
the analogues.  As for their proofs,  
in \cite{gmcw}, we wrote $K_{\sqrt{t}}(x,y) = [f(t\Delta)\delta_y](x)$.  Here we note that, if $r < -n/2$, and if $y \in U_R$,
there is a unique $\Lambda_y \in H^r({\bf L}^s)$ with trivialization $\Lambda_{yR}$ over $U_R$ being $\Lambda_{yR} = \delta_y$;
and that $K_{\sqrt{t},R',R}(x,y) = [f(t\Delta_s)\Lambda_y]_{R'}(x)$.  One then obtains the needed analogues
just as in \cite{gmcw}.

In the
proof of Theorem \ref{locestls}, we may therefore assume $0 < t < 1$ and that $d(x,y) \leq \pi/4$, and consequently that
${\cal F}_{R'}$ and ${\cal F}_R$ are compact subsets of a single $U_{R_0}$.  It is enough to prove the theorem
for $K_{t,R_0,R_0}$ in place of $K_{t,R',R}$, since we are assuming that $0 < t < 1$ and one always has
(\ref{kernel-spin-relation}) for smooth kernel operators.  Now one may proceed entirely analogously to the proof of 
Lemma 4.1 in \cite{gmcw}. 
\vspace{-.3 in}
\flushright$\Box$
\flushleft
\vspace{-.3 in}
\begin{corollary}
\label{cptspt}
In Theorem \ref{framain} (b), for appropriate $f$ and $C > 0$, one has supp$\varphi_{j,k} \subseteq
B(x_{j,k},Ca^j)$.  Thus, for appropriate $a,b$, 
$\left\{ \mu(E_{j,k})^{1/2}\varphi_{j,k}\right\}_{j,k}$ is a nearly tight frame for $(I-P)L^2_s(S^2)$,
with supp$[\mu(E_{j,k})^{1/2}\varphi_{j,k}] \subseteq B(x_{j,k},Ca^j)$.
\end{corollary}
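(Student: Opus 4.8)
The plan is to follow the scalar argument sketched in the Introduction, replacing the scalar Laplacian by $\Delta_s$ and invoking Proposition \ref{finprops} in place of the usual finite propagation speed statement. First I would choose $f$ of the special form $f(\xi^2) = g(\xi)$, where $g \in \mathcal{S}(\RR)$ is even, $\supp\hat g \subseteq (-1,1)$, $g(0)=0$ (so that $f(0)=0$), and the Daubechies condition $(\ref{daub})$ holds for the fixed $a$. Such an $f$ exists: taking $g(\xi)=\xi^{2l}h(\xi)$ with $l \geq 1$ and $h \in \mathcal{S}(\RR)$ even, $h \not\equiv 0$, $\supp\hat h \subseteq (-1,1)$, one has $g$ even, $g(0)=0$, $\supp\hat g \subseteq (-1,1)$, and $(\ref{daub})$ follows for $a$ sufficiently close to $1$ by discretizing the Calder\'on formula $(\ref{Calderon-Formula})$ exactly as in the scalar case (cf.\ Lemma \ref{gelmaylem}). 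With this $f$, all hypotheses of Theorem \ref{framain}$(b)$ hold.

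Next I would observe that, by the spectral calculus for $\Delta_s$, $f(t^2\Delta_s) = g(t\sqrt{\Delta_s})$, so the kernel $K_t = K_{t,f}$ appearing in Theorem \ref{framain} is precisely the kernel $K_t^g$ of $g(t\sqrt{\Delta_s})$. Proposition \ref{finprops} then gives a constant $C > 0$ — independent of $t$, since the operator $\partial_t^2 + \Delta_s$ does not vary with $t$ — such that $K_{t,R',R}(x,y) = 0$ whenever $d(x,y) > C|t|$. Setting $t = a^j$ and recalling that, in the notation of Theorem \ref{framain}, $\varphi_{j,k} = W_{x_{j,k},a^j,R_{j,k}}$ with $(W_{x_{j,k},a^j,R_{j,k}})_{R'}(y) = \overline{K}_{a^j,R_{j,k},R'}(x_{j,k},y)$ for $y \in U_{R'}$, I conclude that every trivialization of $\varphi_{j,k}$ vanishes at $y$ once $d(x_{j,k},y) > Ca^j$. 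By the spin relation $(\ref{spin-relation})$ all trivializations of a spin function share the same zero set where they are defined, so this says exactly that $\supp\varphi_{j,k} \subseteq \overline{B(x_{j,k},Ca^j)}$; enlarging $C$ gives $\supp\varphi_{j,k} \subseteq B(x_{j,k},Ca^j)$.

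Finally, with this $f$ and $a$ close to $1$, choosing $b$ small makes $A_a - C_0 b > 0$, so Theorem \ref{framain}$(b)$ shows that $\{\mu(E_{j,k})^{1/2}\varphi_{j,k}\}_{j,k}$ is a frame for $(I-P)L^2_s(S^2)$ with frame bounds $A_a - C_0 b$ and $B_a + C_0 b$, whose ratio tends to $1$ as $a \to 1$ and $b \to 0$ by the Note following Theorem \ref{framain}; this is the asserted nearly tight frame. Since $\mu(E_{j,k})^{1/2}\varphi_{j,k}$ is a positive scalar multiple of $\varphi_{j,k}$, its support is contained in $B(x_{j,k},Ca^j)$ as well. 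I do not expect any serious difficulty here: the only points needing care are the construction of an $f$ of the above form that also vanishes at $0$ and satisfies the Daubechies condition (handled by the $\xi^{2l}h(\xi)$ device), and the remark that ``support of a spin function'' is well defined via $(\ref{spin-relation})$; the uniformity of the propagation-speed constant $C$ across the scales $a^j$ is immediate.
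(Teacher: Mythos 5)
Your proposal matches the paper's own proof: both choose $f(\xi^2)=g(\xi)$ with $g$ even, Schwartz, and $\supp\hat g\subseteq(-1,1)$, identify $K_t$ with the kernel of $g(t\sqrt{\Delta_s})$, and invoke Proposition \ref{finprops} to get the compact support at scale $a^j$. The extra remarks you add (the $\xi^{2l}h(\xi)$ device to secure $f(0)=0$ and the Daubechies condition, and the observation that the support of a spin function is well defined via the spin relation) are correct and simply make explicit points the paper leaves tacit.
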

\begin{proof}
 We recall from Theorem \ref{framain} that
$\varphi_{j,k} = W_{x_{j,k},a^j,R_{j,k}}$, and from (\ref{needdf0}) that 
$(W_{xtR})_{R'}(y) = \overline{K}_{t,R,R'}(x,y)$, where as usual $K_t$ is the kernel of
$f(t^2\Delta_s)$.  Suppose that $f(\xi^2) = G(\xi)$ for some even $G \in {\cal S}(\RR)$
with \supp$\hat{G} \subseteq (-1,1)$.  Then $K_t$ is the kernel of $G(t\sqrt{\Delta_s})$,
so the result follows at once from Proposition \ref{finprops}.
\end{proof}

\end{document}